\theoremstyle{plain}
\newtheorem{theorem}{Theorem}[section]
\newtheorem{lemma}[theorem]{Lemma}
\newtheorem{corollary}[theorem]{Corollary}
\theoremstyle{definition}
\theoremstyle{remark}
\newtheorem{remark}[theorem]{Remark}
\def\pP{{\mathbb P}}
\def\eE{{\mathbb E}}
\def\nN{{\mathbb N}}
\newcommand{\R}{\mathbb{R}} \newcommand{\N}{\mathbb{N}}
 \newcommand{\K}{\mathcal{K}}
\renewcommand{\P}{\mathcal{P}} \renewcommand{\S}{\mathcal{S}}
\newcommand{\D}{\mathcal{D}} \newcommand{\ra}{\rightarrow}
 \newcommand{\da}{\downarrow}
\newcommand{\bx}{\boldsymbol{x}} \newcommand{\bp}{\boldsymbol{p}}
\newcommand{\bd}{\boldsymbol{d}}
\newcommand{\bsigma}{\boldsymbol{\sigma}}
\DeclareMathOperator*{\argmax}{arg\,max}
\def\nialgo{NI-ME}
\def\ibalgo{IB-CME}
\newcommand{\jk}[1]{\ifthenelse{\boolean{showcomments}} {\textcolor{red}{(JN says: #1)}} {} }
\newcommand{\nk}[1]{\ifthenelse{\boolean{showcomments}} {\textcolor{red}{(NK says: #1)}} {} }
\newcommand{\MP}[1]{\ifthenelse{\boolean{showcomments}} {\textcolor{red}{(MP says: #1)}} {} }
\newcommand{\ignore}[1]{}
\title{Fixed confidence community mode estimation}
\begin{document}

\author[inst1]{Meera Pai}

\affiliation[inst1]{organization={Department of Electrical Engineering},
            addressline={IIT Bombay}, 
            country={India}
            }

\author[inst1]{Nikhil Karamchandani}
\author[inst1]{Jayakrishnan Nair}

\begin{abstract}%
  Our aim is to estimate the largest community (a.k.a., mode) in a
  population composed of multiple disjoint communities. This
  estimation is performed in a fixed confidence setting via sequential
  sampling of individuals with replacement. We consider two sampling
  models: (i) an identityless model, wherein only the community of
  each sampled individual is revealed, and (ii) an identity-based
  model, wherein the learner is able to discern whether or not each
  sampled individual has been sampled before, in addition to the
  community of that individual. The former model corresponds to the
  classical problem of identifying the mode of a discrete
  distribution, whereas the latter seeks to capture the utility of
  identity information in mode estimation. For each of these models,
  we establish information theoretic lower bounds on the expected
  number of samples needed to meet the prescribed confidence level,
  and propose sound algorithms with a sample complexity that is
  provably asymptotically optimal. Our analysis highlights that
  identity information can indeed be utilized to improve the
  efficiency of community mode estimation.%
\end{abstract}
	
\begin{keyword}%
  mode estimation; identity-based sampling; PAC algorithms; generalized likelihood ratio (GLR) statistics
\end{keyword}
\footnote{Nikhil Karamchandani and Jayakrishnan Nair acknowledge support from SERB via grant CRG/2021/002923 and two MATRICS grants.}
\maketitle

\section{Introduction}
\label{sec:intro}

There are several interesting applications which are based on
sequentially sampling individuals from an underlying
population. Examples include \textit{online cardinality estimation}
\citep{finkelstein1998, budianu2006sensors, Bres15, massoulie2006peer}
where the goal is to approximate the total size of the population and
algorithms are typically based on the counting of `collisions', i.e.,
instances where the sampled individual was `seen' before;
\textit{community exploration} \citep{chen2018community, liu2021multi,
  Bubeck2013} where the goal is to discover as many distinct entities
of interest as possible; and \textit{community detection / clustering}
\citep{yun2014community, chen2019proportionally,
  mazumdar2017clustering} where the underlying population is naturally
divided into communities / clusters and the goal is to estimate the
true clustering using samples from the population. 

In this work, we study a variant of the above problems where the
learning agent can sequentially sample individuals uniformly at random
from a population and the goal is to identify the largest community,
a.k.a. the \textit{community mode}. Some applications which motivate
our formulation are \textit{(i)} election polling where the community
of an individual is the candidate / party it supports and estimating
the community mode corresponds to predicting the winning candidate /
party; \textit{(ii)} identifying the dominant strain of a virus in an
infected population where the communities refer to different strains;
and \textit{(iii)} market research surveys where the goal is to
identify the most prevalent opinion in a population on some topic of
interest.

We consider two sampling models in our formulation, which differ in
the information revealed to the agent about each sample: \textit{(i)
  identityless sampling} where only the community that the sampled
individual belongs to is provided to the learning agent; and
\textit{(ii) identity-based sampling} where in addition to the
community information, the agent is also able to infer if the sampled
individual was seen before during the sampling process.

Since at each time, an individual is sampled uniformly at random from
the population, the probability that it belongs to a certain community
is proportional to the community size. Thus, under the identityless
sampling model, where the observation at each time is just the
community label, identifying the largest community becomes equivalent
to identifying the mode of an underlying discrete distribution from
independent and identically distributed (i.i.d.) samples. Given a
target error probability, the goal is to define a stopping rule which
minimizes the number of samples needed to estimate the mode
reliably. This problem was recently studied in
\cite{shah2020sequential} where an instance-dependent (i.e., it is a
function of the underlying probability distribution) lower bound on
the optimal sample complexity was derived and an algorithm based on
the idea of data-driven confidence intervals
\citep{maurer2009empirical} was proposed, for which the sample
complexity was shown to be within a constant factor of the
optimal. More recently, \cite{Jain2022} proposed an alternate stopping
rule based on prior-posterior-ratio (PPR) martingale confidence
sequences (see~\citep{waudby2020confidence}) and showed that its
sample complexity is in fact asymptotically optimal as the target
error probability goes to zero. For this problem, we consider an
alternate Chernoff-style stopping rule based on a novel Generalized
Likelihood Ratio (GLR) statistic \citep{chernoff1959sequential} and
prove its asymptotic optimality. (Interestingly, we discover a
connection between our GLR based algorithms and PPR based algorithms;
see Remark~\ref{remark:ppr}).

In contrast to the identityless sampling model, the identity-based
sampling model allows the learning agent to also know whether the
sampled individual at each time was seen before or not. A key focus of
this work is to study the impact of this additional `identity'
information on the sample complexity of community mode estimation.
The main challenge in designing and analyzing a sample-efficient
stopping rule for this setting is that the observations are no longer
i.i.d. across time; in fact the process has a Markovian structure. 
While a direct application of our GLR statistic for the identityless 
setting is computationally intractable in the identity-based setting, 
we devise a computationally tractable relaxation of this statistic that 
preserves both soundness as well as asymptotic optimality.
We clearly demonstrate, via both theoretical
results as well as numerical simulations, that there can be a
significant gap between the optimal sample complexity for community
mode estimation under the identityless and the identity-based sampling
models.

The GLR based test has found application in several other online
learning settings, including change point detection
\citep{siegmund1995using} and best arm identification in multi armed
bandits \citep{garivier2016optimal}. As mentioned before, one of the
key technical challenges for our setting is that the observations are
non i.i.d.; in fact the number of distinct individuals seen from each
community evolves as an absorbing Markov chain in the community mode
estimation problem.

The mode estimation problem has been studied classically in the batch
or non-sequential setting since many decades, see for example
\cite{parzen1962estimation, chernoff1964estimation}, with focus mainly
on proving statistical properties such as consistency for various
estimators. Another related problem with a long and rich history is
property estimation from samples generated from an underlying
distribution. Recent work has characterized the optimal sample
complexity for various properties of interest such as entropy
\citep{caferov2015optimal, acharya2016estimating}, support size and
coverage \citep{hao2019data, wu2018sample}, amongst others. Finally,
the problem of community mode estimation has been recently studied in
\cite{jain2021sequential} under the \textit{fixed budget} setting,
i.e., the number of samples is fixed and the goal is to design
decision rules which minimize the probability of error. In contrast,
our work focuses on the \textit{fixed confidence} setting where a
target error probability is specified but the number of samples is not
fixed apriori and the goal is to design stopping rules with small
sample complexity.

In this paper, we considered the learning task of online community
mode estimation in a fixed confidence setting, focusing on the role of
identity information. Our analysis relies heavily on the existing
GLR-based machinery for establishing correctness and asymptotic
optimality of PAC algorithms (most notably, the methodology developed
in~\cite{garivier2016optimal,karthik2020}). Our key novelty is: \\ (i)
From a modeling standpoint, we highlight the value of \emph{identity}
information in mode estimation.\\ (ii) We modify the GLR statistic
proposed in \cite{karthik2020} to design a tractable algorithm for the
identityless sampling model. A superior `1v1' variant of this
algorithm is also designed, which turns out to be identical to the
state of the art algorithm for (identityless) mode estimation--PPR-1v1
(see \cite{Jain2022}), which was designed using a different design
recipe.\\ (ii) We design a sound stopping condition under a
\emph{Markovian} observation sequence (arising under identity-based
sampling), and further design a computationally tractable relaxation
of the stopping statistic that preserves both soundness and asymptotic
optimality (Lemma~\ref{lemma:sup_pf_ST_Yabt_wi} is crucial to this
relaxation); indeed, that the discrete likelihood optimization over
$\mathbb{N}^K$ in~\eqref{def_Wabt} can be tractably relaxed is far from trivial.

\section{Problem formulation}


We consider a population of $N$ individuals. This population is
partitioned into~$K$ communities, the set of communities being denoted
by $\K = \{1,2,\ldots, K\}.$ In particular, community~$j$ has $d_j$
individuals, so that $\sum_{j\in \mathcal{K}} d_j = N.$ Our goal is to
identify the largest community (a.k.a., the community mode) in this
population, via sequential (random) sampling of individuals \emph{with
  replacement}, in a fixed confidence setting. At each sampling epoch,
an individual, selected uniformly at random from the population, is
`seen' by the learning agent. We make the assumption
  that the number of communities~$K$ is known to the learning
  agent. However, \emph{the total population size $N$ is assumed to be
    a priori unknown to the learning agent}. We consider two
different sampling models in this work, which differ in the
information that the learning agent obtains in each sampling epoch.

\ignore{
A mixed community setting with $K$ communities, i.e. a population
where all the $K$ communities are contained in a single partition(or
box) is considered. Individuals are drawn at randon from this box
sequentially with replacement. The size of community $j$ is denoted by
$d_j$ for $j \in \mathcal{K} = {1,2,\ldots K}$. Without loss of
generality we can assume that $d_1 > d_2 \geq \ldots d_{K-1} \geq d_K
$. Let $N = \sum_{j=1}^{K}d_j$. Let $p_i = \frac{d_j}{N}$ be the
probability of picking a sample from community $i$ where $i \in
\mathcal{K}$. Since the communities are assumed to be ordered, $p_1 >
p_2 \geq \ldots p_{K-1} \geq p_K $. Our goal is to estimate the
largest community or mode for this set of communities. We consider two
different sampling models given below for mode estimation.
}

\begin{enumerate}
\item \textit{Identityless sampling}:\\ Under this sampling model, at
  each time (a.k.a., sampling epoch) $t$, only the community
  associated with the sampled individual is revealed to the learning
  agent. In particular, let $x_t$ denote the community of the individual
  sampled at time $t$. Then the vector $\bx^t = (x_1, x_2, \ldots,
  x_t)$ represents the observations recorded by the learning agent
  till time $t$.

  Under the identityless sampling model, the agent is essentially
  recording i.i.d. samples from a distribution with support~$\K,$ with
  a probability mass ~$p_j = d_j / N$ on community~$j.$ Thus,
  community mode estimation boils down to identifying the mode of this
  distribution using i.i.d.  samples; this estimation has been
  analysed recently (also in the fixed confidence setting)
  in~\cite{shah2020sequential,Jain2022}.
  In particular, \cite{Jain2022} proposed a stopping rule based
  on prior-posterior-ratio (PPR) martingale confidence sequences
  \citep{waudby2020confidence} and showed that its sample complexity
  is asymptotically optimal as the target error probability goes to
  zero. We prove an analogous result for an alternate Chernoff-style
  stopping rule based on a Generalized Likelihood Ratio (GLR)
  statistic \citep{chernoff1959sequential}. More importantly, this
  result for the identityless sampling model provides a benchmark for
  the identity-based sampling model described next and will help us
  quantify the impact of identity information in community mode
  estimation.

  
\item \textit{Identity-based sampling}: \\ Under this sampling model,
  we assume that along with the community of the sampled individual,
  the learning agent is also able to \emph{identify} her, such that it
  knows whether or not this individual has been seen
  before.

  It is important to note that under identity-based sampling, the
  learning agent does not need to observe or record a unique
  identifier (say a `social security number') associated with the
  sampled individual. It may simply assign a pseudo-identity to each
  individiual when she is seen for the first time; all that is assumed
  here is the agent's ability to compare the sampled individual with
  those sampled before. For example, in the context of wildlife
  conservation, samples might correspond to animal sightings collected
  by camera traps, with image processing tools used to identify
  distinct
  sightings.\footnote{\tiny{\url{https://www.nytimes.com/1996/07/30/science/snapshots-to-improve-the-tallies-of-tigers.html}}}

  Let $\sigma_t$ be an indicator of whether the individual
  sampled at time~$t$ is new or repeated; formally,
  \begin{equation*}
    \sigma_t = 
    \begin{cases}
      0 & \text{if sample at time~$t$ was seen before,}\\ 1 & \text{if
        sample at time~$t$ was observed for the first time}.
    \end{cases}
  \end{equation*}
  Under the identity-based sampling model, the observation recorded at
  time~$t$ is $(x_t, \sigma_t)$, and the observations made till
  time~$t$ are captured by $(\bx^t, \bsigma^t),$ where $\bsigma^t =
  (\sigma_1, \sigma_2, \ldots, \sigma_t).$
\end{enumerate}
Unlike the identityless sampling model, observations under the
identity-based sampling model are not i.i.d. across time; rather, the
observation process is \emph{Markovian}. This complicates the design
and analysis of sound stopping rules for the identity-based sampling
model (see Section~\ref{sec:identity}).\footnote{The identity-based
  sampling model has been also used before in the context of
  cardinality estimation~\citep{finkelstein1998,budianu2006sensors}
  and community exploration~\citep{chen2018community}.}

As noted before, our goal is to estimate the community mode in a fixed
confidence setting under the above sampling models. Without loss of
generality, we assume that the community~1 is the largest, and that
the community sizes satisfy $d_1 > d_2 \geq \cdots d_{K-1} \geq d_K.$
Given a prescribed error threshold $\delta \in (0,1),$ we seek to
design $\delta$-PC algorithms (PC stands for \emph{Probably
  Correct}), i.e., algorithms that terminate at a stopping time, say
$\tau_\delta,$ and output a mode estimate $\hat{a}_{\tau_\delta},$
such that the probability of error is bounded by $\delta,$ i.e.,
$\pP(\tau_\delta < \infty, \hat{a}_{\tau_\delta} \neq 1 ) \leq
\delta.$


\section{Mode estimation with identityless sampling}
\label{sec:noidentity}

In this section, we study the sequential mode estimation problem under
the identityless sampling model. Specifically, we propose an algorithm
that stops sampling based on a novel generalized likelihood ratio
(GLR) statistic inspired by~\cite{karthik2020} (which uses a related
statistic in the context of detecting an odd Markov arm). This
algorithm is proved to be sound (i.e., $\delta$-PC), and also
asymptotically optimal as $\delta \da 0.$ We also propose a
`one-versus-one' variant of the same algorithm, which makes only
pairwise comparisons between communities (disregarding observations
pertaining to other communities when considering any pair of
communities) and show that this `1v1' variant outperforms the original
algorithm. The algorithms proposed in this section serve as a
benchmark for the algorithms we design under the identity-based
sampling model in Section~\ref{sec:identity}.

We begin by stating an information theoretic lower bound on the
expected stopping time of any $\delta$-PC mode estimation algorithm
under identityless sampling.
\begin{lemma}[Theorem~3 in \cite{shah2020sequential}]
  \label{lemma:lb_indentity-less}
  Under the identityless sampling model, given $\delta \in (0,1)$, the
  expected stopping time $\eE[\tau_\delta]$ of any $\delta$-PC
  algorithm satisfies
    \begin{equation*}
      \eE[\tau_\delta] \geq \frac{ \log\left( \frac{1}{2.4\delta}
      \right) }{(p_1 + p_2 )\ kl\left( \frac{p_1}{p_1 + p_2 } , 
          \frac{1}{2} \right)},
    \end{equation*}
   where $kl(x, y) := x\log\left(\frac{x}{y}\right) +
   (1-x)\log\left(\frac{1-x}{1-y}\right)$ denotes the binary relative
   entropy function.\footnote{Throughout the paper, all logarithms are
     to the base~$e.$}
\end{lemma}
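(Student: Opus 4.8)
The plan is to prove this lower bound via the standard change-of-measure (transportation) argument from the pure-exploration bandit literature, specialized to the mode-estimation problem. This is Theorem~3 of \cite{shah2020sequential}, so I would reconstruct its proof using the now-classical technique that underlies all such information-theoretic lower bounds.

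First I would set up the change-of-measure. The true instance is $\bp = (p_1, p_2, \ldots, p_K)$ with community~$1$ as the unique mode. I construct a confusing alternative instance $\bp'$ under which community~$2$ (rather than community~$1$) is the mode, while perturbing the distribution as little as possible in KL-divergence. The natural choice is to move mass between communities~$1$ and~$2$ only, leaving all other coordinates fixed: set $p_1' = p_2' = (p_1 + p_2)/2$ and $p_j' = p_j$ for $j \geq 3$. Under $\bp'$, community~$1$ is no longer the strict mode, so a $\delta$-PC algorithm must (with high probability) \emph{not} output~$1$ on instance $\bp'$, whereas on the true instance $\bp$ it must output~$1$ with probability at least $1-\delta$.

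Next I would invoke the Wald-type / data-processing inequality that links the expected number of samples to the KL-divergence between the two instances. The key tool is the lower bound on the log-likelihood ratio: for any event $\mathcal{E}$ measurable with respect to the observations up to the (a.s.\ finite) stopping time,
\begin{equation*}
  \eE_{\bp}[\tau_\delta] \cdot \mathrm{KL}(\bp, \bp') \;\geq\; kl\bigl(\pP_{\bp}(\mathcal{E}),\, \pP_{\bp'}(\mathcal{E})\bigr),
\end{equation*}
which follows from Wald's identity applied to the i.i.d.\ log-likelihood ratios together with the data-processing inequality for the binary KL. Taking $\mathcal{E} = \{\hat{a}_{\tau_\delta} = 1\}$, the $\delta$-PC guarantee gives $\pP_{\bp}(\mathcal{E}) \geq 1-\delta$ and $\pP_{\bp'}(\mathcal{E}) \leq \delta$, so by monotonicity $kl(\pP_{\bp}(\mathcal{E}), \pP_{\bp'}(\mathcal{E})) \geq kl(1-\delta, \delta) \geq \log\bigl(\frac{1}{2.4\delta}\bigr)$, where the last inequality is a standard numerical estimate for the binary relative entropy.

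The remaining step is to compute $\mathrm{KL}(\bp, \bp')$ explicitly and recognize it as the denominator in the claimed bound. Since $\bp$ and $\bp'$ differ only in coordinates~$1$ and~$2$, all terms with $j \geq 3$ vanish and
\begin{equation*}
  \mathrm{KL}(\bp, \bp') = p_1 \log\frac{p_1}{(p_1+p_2)/2} + p_2 \log\frac{p_2}{(p_1+p_2)/2} = (p_1+p_2)\, kl\!\left(\frac{p_1}{p_1+p_2},\, \frac{1}{2}\right),
\end{equation*}
after factoring out $(p_1+p_2)$ and rewriting the two-term sum as the binary KL between $\frac{p_1}{p_1+p_2}$ and $\frac12$. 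Combining this identity with the previous inequality and rearranging yields the stated bound. I expect the main conceptual obstacle to be the justification of the Wald-type inequality at the \emph{stopping time} $\tau_\delta$ rather than at a fixed horizon---this requires that $\tau_\delta$ be a.s.\ finite under both measures (which one may reduce to by noting the bound is vacuous otherwise) and a careful application of optional stopping / Wald's identity for the cumulative log-likelihood ratio. The change-of-measure choice itself, while intuitive, also merits a brief remark that moving mass only between the top two communities is the KL-cheapest way to swap the mode, which is exactly what makes the bound tight.
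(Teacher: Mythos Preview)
The paper does not give its own proof of this lemma; it simply cites Theorem~3 of \cite{shah2020sequential}. Your reconstruction via the standard change-of-measure / transportation inequality is exactly the argument that underlies that result (and is the same machinery the paper uses in its proof of Lemma~\ref{lemma:lb_identitybased} for the identity-based model).

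One genuine gap: your alternative instance $\bp'$ with $p_1' = p_2' = (p_1+p_2)/2$ has \emph{no unique mode}, so the $\delta$-PC guarantee does not force $\pP_{\bp'}(\hat{a}_{\tau_\delta}=1)\le\delta$; a sound algorithm is allowed to output either~1 or~2 there. The fix is routine but needs to be stated: take $p_1' = (p_1+p_2)/2 - \epsilon$, $p_2' = (p_1+p_2)/2 + \epsilon$ so that~2 is the strict mode under $\bp'$, apply your inequality, and let $\epsilon \downarrow 0$ using continuity of $\mathrm{KL}(\bp,\cdot)$. (Compare the paper's proof of Lemma~\ref{lemma:lb_identitybased}, where discreteness of the community sizes lets them take $d_2' = d_1+1$ directly.) With this patch, your argument goes through as written.
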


Next, we describe our main algorithm, which seeks to match the above
lower bound as $\delta \da 0.$

\subsection{The \nialgo\ algorithm}

At time $t \in \nN$ (specifically, after sampling epoch~$t$), let
$N_i(t)$ denote the number of observations from community~$i,$ and let
$\hat{p}_i(t) := N_i(t) / t$ denote the empirical frequency of samples
from community~$i.$ Let $\hat{a}_t := \argmax_{i \in \K}
\hat{p}_i(t)$ and $\hat{b}_t := \argmax_{i \in \K
  \setminus\{\hat{a}_t\}} \hat{p}_i(t)$ denote the communities
associated with the largest and second largest empirical frequency
respectively. (In case either maximizer is not unique, $\hat{a}_t$ and
$\hat{b}_t$ may be chosen arbitrarily from the corresponding sets of
maximizers.)

We employ the following stopping statistic:
\begin{align}
	Z(t) & := \log\frac{B(N_1(t)+ 1, N_2(t)+1, \ldots , N_K(t)+1)}{B(1,1,\ldots,1)}  
	- \sum_{i \in \K\setminus\{\hat{a}_t,\hat{b}_t\}}N_i(t)\log(\hat{p}_i(t)) \nonumber \\
        & \qquad
        - (N_{\hat{a}_t}(t) + N_{\hat{b}_t}(t))\log\left(\frac{\hat{p}_{\hat{a}_t}(t) + \hat{p}_{\hat{b}_t}(t)}{2}\right)
        \label{eq:Zt_iless}
\end{align}
Here $B:\R_+^K \ra \R_+ $ denotes the multinomial beta
function.\footnote{For $\boldsymbol{\psi} = (\psi_1, \psi_2, \ldots,
  \psi_K) \in \R_+^K,$ $\displaystyle B(\boldsymbol{\psi}) =
  \frac{\prod_{i=1}^{K}\Gamma(\psi_i)}{\Gamma(\sum_{i=1}^{K}\psi_i)},$
  where $\Gamma(.)$ denotes the gamma function.} The proposed
algorithm, which we refer to as the \emph{No Identity Mode Estimation}
(\nialgo) algorithm, stops sampling if
\begin{equation}
  \label{eq:stopping_identityless}
  \begin{array}{l}
    Z(t) 
     > \beta(t,\delta) := \log\left(
    \frac{K-1}{\delta} \right),
  \end{array}
\end{equation}
and outputs the mode estimate~$\hat{a}_t$. This is stated formally as
Algorithm~\ref{alg:identityless}.

\begin{algorithm}
  \caption{No Identity Mode Estimation (\nialgo) algorithm}
  \label{alg:identityless}
  \begin{algorithmic}
    \STATE $t \gets 1$
    \LOOP
    \STATE Obtain $x_t$
    \STATE $\hat{a}_t \gets \arg\max_{i \in \mathcal{K}} \hat{p}_i(t) $, $\hat{b}_t \gets
    \arg\max_{i \in \mathcal{K} \setminus \{\hat{a}_t\}} \hat{p}_i(t) $
    \STATE $ Z(t) \gets \log\frac{B(N_1(t)+ 1, N_2(t)+1, \ldots , N_K(t)+1)}{B(1,1,\ldots,1)}
    - \sum_{i \in \K\setminus\{\hat{a}_t,\hat{b}_t\}}N_i(t)\log(\hat{p}_i(t))$
    \STATE $ \hspace{3cm} -(N_{\hat{a}_t}(t) + N_{\hat{b}_t}(t))\log\left(\frac{\hat{p}_{\hat{a}_t}(t) + \hat{p}_{\hat{b}_t}(t)}{2}\right)$
    \STATE $\beta(t, \delta) \gets \log\left( \frac{K-1}{\delta} \right)$
    \IF {$Z(t) >  \beta(t, \delta)$}
    \STATE { stopping time $\tau_\delta = t$ and mode $\hat{a}_{\tau_\delta} = \hat{a}_t $}
    \STATE Exit loop
    \ELSE
    \STATE{$t \gets t+1 $}
    \ENDIF
    \ENDLOOP
  \end{algorithmic}
\end{algorithm}

\subsection{Stopping rule interpretation}
From \eqref{eq:stopping_identityless}, the stopping time of the
\nialgo\ algorithm is given by
\begin{equation}
  \tau_\delta  = \inf\{t \in \nN \colon Z(t)  > \beta(t, \delta)   \}.
  \label{stopping_time} 
\end{equation}
In what follows, we interpret $Z(t),$ defined in \eqref{eq:Zt_iless}
as a certain GLR statistic.

At time $t \in \nN,$ the observations made by the learning agent thus
far are represented by $\bx^t = (x_1, x_2, \ldots, x_t).$ Recall that
$x_j$ denotes the community associated with the sampled individual at
time $j$ and $p_i = d_i / N$ denotes the probability of drawing a sample from
community~$i$ at any given time. Thus, the likelihood associated with
the observations till time~$t$ is given by
\begin{equation*}
  \mathcal{L}_{\boldsymbol{p}}(x_1, x_2, \ldots, x_t) = \prod_{i=1}^K p_i^{N_i(t)},
\end{equation*}
where $\boldsymbol{p} = (p_1,p_2, \ldots p_K).$ Let $\mathcal{S}$
denote the $K$-dimensional probability simplex, i.e. the set of all
$K$-dimensional probability mass functions, and let $\P$ denote the
Dirichlet distribution over~$\mathcal{S},$ with all concentration
parameters equal to one.\footnote{The probability density function
  associated with~$\P$ is constant and equal
  to~$\frac{1}{B(1,1,\ldots,1)} = \frac{1}{\Gamma(K)}$
  over~$\mathcal{S}.$} For communities $a,b \in \K,$ define the
statistic
\begin{equation}
  \label{eq:Zab_identityless}
  \displaystyle
  Z_{a,b}(t) := \log\frac{\eE_{\bp' \sim \P}[\mathcal{L}_{\boldsymbol{p}'}(\bx^t)] }{\max_{\bp' \in \mathcal{S} \colon p'_a
  		\leq p'_b}\mathcal{L}_{\boldsymbol{p}'}(\bx^t)}.
\end{equation}
We now show that $Z_{a,b}(t)$ admits a simpler representation in terms
of the empirical frequencies of communities.
\begin{lemma}
  \label{simple_Zabt_iless}
  At any time $t \in \nN$, if $\hat{p}_a(t) \geq \hat{p}_b(t),$ then 
  \begin{align*}
    Z_{a,b}(t) &= \log\frac{B(N_1(t)+ 1, N_2(t)+1, \ldots , N_K(t)+1)}{B(1,1,\ldots,1)}
    - \sum_{i \in \K\setminus\{a,b\}}N_i(t)\log(\hat{p}_i(t))\\ 
    & \hspace{3cm} - (N_a(t) + N_b(t))\log\left(\frac{\hat{p}_a(t) + \hat{p}_b(t)}{2}\right),\\
    Z_{b,a}(t) &= \log\frac{B(N_1(t)+ 1, N_2(t)+1, \ldots , N_K(t)+1)}{B(1,1,\ldots,1)} - \sum_{i=1}^{K}N_i(t)\log(\hat{p}_i(t))
    \leq Z_{a,b}(t).
  \end{align*}
\end{lemma}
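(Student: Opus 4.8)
The plan is to evaluate the numerator and the denominator of the ratio in \eqref{eq:Zab_identityless} separately. For the numerator, I would recognize $\eE_{\bp' \sim \P}[\mathcal{L}_{\boldsymbol{p}'}(\bx^t)]$ as a Dirichlet integral: since $\P$ has constant density $1/B(1,1,\ldots,1)$ over $\S$,
$$\eE_{\bp' \sim \P}[\mathcal{L}_{\boldsymbol{p}'}(\bx^t)] = \frac{1}{B(1,\ldots,1)}\int_{\S}\prod_{i=1}^K (p_i')^{N_i(t)}\,d\bp' = \frac{B(N_1(t)+1,\ldots,N_K(t)+1)}{B(1,\ldots,1)},$$
where the last equality is the normalizing-constant identity for the Dirichlet distribution (namely $\int_\S \prod_i (p_i')^{\alpha_i-1}\,d\bp' = B(\alpha_1,\ldots,\alpha_K)$ applied with $\alpha_i = N_i(t)+1$). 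This produces the $\log\frac{B(N_1(t)+1,\ldots,N_K(t)+1)}{B(1,\ldots,1)}$ term common to both displayed expressions.

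The substance lies in the denominators, each a constrained maximum-likelihood problem for the concave log-likelihood $\ell(\bp') = \sum_i N_i(t)\log p_i'$ over $\S$, whose global (unconstrained) maximizer is the empirical distribution $\hat{\bp}(t)$ with value $\sum_i N_i(t)\log \hat p_i(t)$. For $Z_{b,a}(t)$ the feasible set is $\{p_b' \le p_a'\}$; since $\hat p_a(t) \ge \hat p_b(t)$ by assumption, $\hat{\bp}(t)$ already lies in this set, so the constrained maximum equals the global one, giving the stated formula for $Z_{b,a}(t)$ at once. For $Z_{a,b}(t)$ the feasible set is $\{p_a' \le p_b'\}$; when $\hat p_a(t) > \hat p_b(t)$ the global maximizer is infeasible, and the key step is to show the constraint is active at the optimum, i.e. $p_a' = p_b'$ there. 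I would establish this from strict concavity of $\ell$ together with convexity of the feasible set (equivalently, via the KKT stationarity condition): if the constrained optimizer had $p_a' < p_b'$ it would be an interior stationary point and hence coincide with $\hat{\bp}(t)$, contradicting $\hat p_a(t) > \hat p_b(t)$.

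With $p_a' = p_b'$ in force, I would collapse coordinates $a$ and $b$ into a single category carrying count $N_a(t)+N_b(t)$ and probability mass $p_a'+p_b'$, reducing to an ordinary multinomial MLE over $(\K\setminus\{a,b\})\cup\{\text{merged}\}$. A one-line Lagrange computation then yields $p_i' = \hat p_i(t)$ for $i\notin\{a,b\}$ and $p_a' = p_b' = (\hat p_a(t)+\hat p_b(t))/2$ (coordinates with $N_i(t)=0$ handled by the convention $0\log 0 = 0$); substituting back gives the constrained log-likelihood $\sum_{i\in\K\setminus\{a,b\}} N_i(t)\log\hat p_i(t) + (N_a(t)+N_b(t))\log\frac{\hat p_a(t)+\hat p_b(t)}{2}$, which is exactly the negative of the last two terms in the claimed expression for $Z_{a,b}(t)$. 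The boundary case $\hat p_a(t)=\hat p_b(t)$ is consistent with the same formula, so no separate treatment is needed. Finally, $Z_{b,a}(t)\le Z_{a,b}(t)$ is immediate: both statistics share the numerator, and since $\max_{p_a'\le p_b'}\mathcal{L}_{\boldsymbol{p}'}(\bx^t) \le \max_{\bp'\in\S}\mathcal{L}_{\boldsymbol{p}'}(\bx^t) = \max_{p_b'\le p_a'}\mathcal{L}_{\boldsymbol{p}'}(\bx^t)$ (the last equality because $\hat{\bp}(t)$ is feasible for the constraint $p_b'\le p_a'$), the denominator of $Z_{a,b}(t)$ is no larger, forcing $Z_{a,b}(t)\ge Z_{b,a}(t)$. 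I expect the activeness argument in the second paragraph to be the only real obstacle; everything else is computation.
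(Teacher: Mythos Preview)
Your proposal is correct and follows essentially the same route as the paper: evaluate the numerator as a Dirichlet normalizing constant, then handle the two constrained maximum-likelihood problems via KKT (the paper writes out the KKT system explicitly and solves for $p^*_a=p^*_b=(\hat p_a+\hat p_b)/2$, whereas you frame the same step as ``collapse the two coordinates and redo the multinomial MLE,'' which is equivalent). The only small caveat is that $\ell$ is not \emph{strictly} concave when some $N_i(t)=0$, so lean on the KKT/activeness argument you already sketch rather than on strict concavity alone.
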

Finally, the following lemma establishes the connection between the
statistics $\{Z_{a,b}(t)\}$ and our stopping
condition~\eqref{eq:stopping_identityless}.
\begin{lemma}
	\label{Lemma:Z}
	At any time $t \in \nN$,
        $$Z(t) = \max_{a \in \mathcal{K}} \min_{b \in \mathcal{K}\setminus\{a\}} Z_{a,b}(t) = Z_{\hat{a}_t, \hat{b}_t}(t).$$
        \ignore{
	\begin{align*}
	  Z(t) & = Z_{\hat{a}_t, \hat{b}_t}(t) = \log\frac{B(N_1(t)+ 1, N_2(t)+1, \ldots , N_K(t)+1)}{B(1,1,\ldots,1)} \\ 
	  & \hspace{3cm} - \sum_{i \in \K\setminus\{\hat{a}_t,\hat{b}_t\}}N_i(t)\log(\hat{p}_i(t)) -
	  (N_{\hat{a}_t}(t) + N_{\hat{b}_t}(t))\log\left(\frac{\hat{p}_{\hat{a}_t}(t) + \hat{p}_{\hat{b}_t}(t)}{2}\right).
	\end{align*}
        } 
	
\end{lemma}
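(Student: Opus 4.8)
The plan is to prove the two equalities separately, obtaining the second one ($Z(t)=Z_{\hat a_t,\hat b_t}(t)$) almost for free and concentrating the work on the max-min identity. The easy observation is that, by definition, $\hat p_{\hat a_t}(t)\ge \hat p_{\hat b_t}(t)$, so Lemma~\ref{simple_Zabt_iless} applied with $(a,b)=(\hat a_t,\hat b_t)$ reproduces exactly the right-hand side of~\eqref{eq:Zt_iless}; hence $Z(t)=Z_{\hat a_t,\hat b_t}(t)$ with no further argument, and it only remains to show $Z_{\hat a_t,\hat b_t}(t)=\max_{a}\min_{b\neq a}Z_{a,b}(t)$.

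First I would isolate the common structure of the family $\{Z_{a,b}(t)\}$. Writing $C:=\log\frac{B(N_1(t)+1,\ldots,N_K(t)+1)}{B(1,\ldots,1)}$ and $S:=\sum_{i\in\K}N_i(t)\log\hat p_i(t)$ for the two terms shared by all the statistics, Lemma~\ref{simple_Zabt_iless} yields the compact dichotomy
\[
Z_{a,b}(t)=
\begin{cases}
C-S+g_{a,b}(t), & \hat p_a(t)\ge \hat p_b(t),\\
C-S, & \hat p_a(t)\le \hat p_b(t),
\end{cases}
\]
where $g_{a,b}(t):=N_a(t)\log\hat p_a(t)+N_b(t)\log\hat p_b(t)-(N_a(t)+N_b(t))\log\frac{\hat p_a(t)+\hat p_b(t)}{2}$. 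By convexity of $x\mapsto x\log x$ (Jensen's inequality) one gets $g_{a,b}(t)\ge 0$, with equality \emph{iff} $\hat p_a(t)=\hat p_b(t)$. Consequently $Z_{a,b}(t)\ge C-S$ for every pair, with equality exactly when $\hat p_a(t)\le \hat p_b(t)$.

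This dichotomy pins down the inner minimization. For each fixed $a$, if $a$ is \emph{not} the unique strict empirical maximizer (either beaten by, or tied with, some other community), then there is $b\neq a$ with $\hat p_b(t)\ge \hat p_a(t)$, so $Z_{a,b}(t)=C-S$ hits the global floor and $\min_{b\neq a}Z_{a,b}(t)=C-S$. If instead $a=\hat a_t$ is the unique strict maximizer, then $Z_{\hat a_t,b}(t)=C-S+g_{\hat a_t,b}(t)>C-S$ for all $b\neq\hat a_t$, making $a=\hat a_t$ the unique maximizer of the outer problem and reducing it to $C-S+\min_{b\neq\hat a_t}g_{\hat a_t,b}(t)$. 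In the tied-top case the outer max equals the floor $C-S$, which already matches $Z_{\hat a_t,\hat b_t}(t)$ since there $\hat p_{\hat a_t}(t)=\hat p_{\hat b_t}(t)$ forces $g_{\hat a_t,\hat b_t}(t)=0$; so ties require only this bookkeeping check.

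The remaining and most delicate step — the one I expect to be the main obstacle — is to identify $\argmin_{b\neq\hat a_t}g_{\hat a_t,b}(t)$ with $\hat b_t$. The plan is to show $g_{a,b}(t)$ is strictly decreasing in $\hat p_b(t)$ on $[0,\hat p_a(t))$: differentiating $g_{a,b}(t)/t=\hat p_a\log\hat p_a+\hat p_b\log\hat p_b-(\hat p_a+\hat p_b)\log\frac{\hat p_a+\hat p_b}{2}$ with respect to $\hat p_b$ gives $\log\frac{2\hat p_b}{\hat p_a+\hat p_b}$, which is strictly negative whenever $\hat p_b<\hat p_a$. Hence among $b\neq\hat a_t$ the value $g_{\hat a_t,b}(t)$ is smallest at the community with the largest empirical frequency, which is precisely $\hat b_t:=\argmax_{i\in\K\setminus\{\hat a_t\}}\hat p_i(t)$. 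This gives $\min_{b\neq\hat a_t}g_{\hat a_t,b}(t)=g_{\hat a_t,\hat b_t}(t)$ and therefore $\max_a\min_{b\neq a}Z_{a,b}(t)=C-S+g_{\hat a_t,\hat b_t}(t)=Z_{\hat a_t,\hat b_t}(t)$, completing the proof; every other step is routine manipulation on top of Lemma~\ref{simple_Zabt_iless}.
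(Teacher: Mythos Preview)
Your proof is correct and follows essentially the same strategy as the paper: both exploit the dichotomy from Lemma~\ref{simple_Zabt_iless} to identify the common floor value $C-S$, use it to show the outer maximum is attained at $\hat a_t$, and then argue that the inner minimum over $b\neq\hat a_t$ sits at $\hat b_t$. Your treatment is in fact more careful than the paper's on this last point---the paper simply asserts $\min_{b\neq\hat a_t}Z_{\hat a_t,b}(t)=Z_{\hat a_t,\hat b_t}(t)$ in its step~(b) without an explicit argument, whereas your monotonicity computation for $g_{\hat a_t,b}$ in $\hat p_b$ (and your separate handling of ties) supplies that missing detail.
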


\begin{remark}
  The statistic~$Z_{a,b}(t),$ whose relation to the stopping condition
  of the \nialgo\ algorithm is established in Lemma~\ref{Lemma:Z}, is
  a modification of the statistic defined in~\cite{karthik2020} (in
  the context of a different learning task--detecting an `odd' Markov
  arm). Specifically,~\cite{karthik2020} used a \emph{conditional}
  average likelihood (conditioned on hypothesis~$a$ dominating
  hypothesis~$b$) in place of the \emph{unconditional} average
  likelihood we use (see~\eqref{eq:Zab_identityless}). While this
  modification does not affect the soundness of the stopping criterion
  (as we establish in Theorem~\ref{thm:correctness_identityless}
  below), it enables a closed form, numerically tractable
  characterization of~$Z_{a,b}(t)$ (see
  Lemma~\ref{simple_Zabt_iless}).
\end{remark}

\begin{remark}
  It is also instructive to connect the statistic $Z_{a,b}(t)$ to the
  one used in the celebrated Track \& Stop algorithm
  (see~\cite{garivier2016optimal}) for the MAB problem. The
  corresponding statistic in \cite{garivier2016optimal} is the
  logarithm of the ratio of the maximum likelihood over instances
  where hypothesis~$a$ dominates hypothesis~$b$, to the maximum
  likelihood over instances where hypothesis~$b$ dominates
  hypothesis~$a.$ In effect, we replace the former
    maximum likelihood with an \emph{average} likelihood over a
    suitably chosen `fake' prior on the space of instances (note that
    we are \emph{not} considering a Bayesian setting where the
    instance is assumed to be drawn from this prior). This
  modification obviates the need for a certain Krichevsky-Trofimov
  (KT) bound in the soundness argument (see the proof of Theorem~10
  in~\cite{garivier2016optimal}), resulting in a tighter stopping
  threshold~$\beta.$
\end{remark}

\subsection{Correctness and asymptotic optimality of \nialgo}
We now present performance guarantees on the
\nialgo\ algorithm. First, we establish that the algorithm is sound.
\begin{theorem}
  \label{thm:correctness_identityless}
  Let $\delta \in (0,1).$ Under the identityless sampling model, the
  \nialgo\ algorithm (see Algorithm~\ref{alg:identityless})
  is $\delta$-PC.
\end{theorem}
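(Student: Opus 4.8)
The plan is to reduce the error event to the crossing of a fixed high level by a single nonnegative martingale, and then close the argument with Ville's maximal inequality. Throughout I fix the (unknown) true instance $\bp$, with $p_1 > p_j$ for all $j \neq 1$, and work under $\pP_{\bp}$.

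First I would localize on the error event $\{\tau_\delta < \infty,\ \hat{a}_{\tau_\delta} \neq 1\}$ and set $a := \hat{a}_{\tau_\delta} \neq 1$. By Lemma~\ref{Lemma:Z}, $Z(\tau_\delta) = \max_{a'} \min_{b} Z_{a',b}(\tau_\delta)$ with the outer maximum attained at $\hat{a}_{\tau_\delta} = a$, so $Z(\tau_\delta) = \min_{b \in \K \setminus \{a\}} Z_{a,b}(\tau_\delta)$. Since $a \neq 1$, the true mode $1$ is an admissible choice of $b$, and the stopping rule~\eqref{eq:stopping_identityless} gives $\beta < Z(\tau_\delta) \leq Z_{a,1}(\tau_\delta)$. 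This is the key step: the min over $b$ lets me compare the realized wrong output $a$ against the true mode through one pairwise statistic, so no union bound over the $K-1$ wrong communities is needed.

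Second I would upper bound $Z_{a,1}$ by a likelihood ratio against $\bp$. By~\eqref{eq:Zab_identityless}, the denominator of $Z_{a,1}(t)$ is $\max_{\bp' \in \S \colon p'_a \leq p'_1} \mathcal{L}_{\bp'}(\bx^t)$; because the true instance satisfies $p_a < p_1$, it is feasible for the constraint $p'_a \leq p'_1$, so this maximum is at least $\mathcal{L}_{\bp}(\bx^t)$. Hence $Z_{a,1}(t) \leq \log M_t$, where $M_t := \eE_{\bp' \sim \P}[\mathcal{L}_{\bp'}(\bx^t)] / \mathcal{L}_{\bp}(\bx^t)$. Combining the two steps, on the error event $M_{\tau_\delta} > e^{\beta} = (K-1)/\delta$, and notably the right-hand side $\log M_t$ does not depend on $a$, so the whole error event is contained in $\{\exists t \colon M_t > (K-1)/\delta\}$.

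Third I would verify that $(M_t)_{t\ge 0}$ is a nonnegative $\pP_{\bp}$-martingale with $M_0 = 1$ and apply Ville's inequality. Writing $M_t = \int_{\S} \prod_{i=1}^K (p'_i/p_i)^{N_i(t)}\, d\P(\bp')$, for each fixed $\bp'$ the integrand is a likelihood-ratio martingale: its one-step conditional expectation carries the extra factor $p'_{x_{t+1}}/p_{x_{t+1}}$, whose mean under $\bp$ is $\sum_{j \in \K} p_j\,(p'_j/p_j) = \sum_j p'_j = 1$. By Fubini (legitimate since the integrand is nonnegative) the mixture $M_t$ is then a nonnegative martingale with $M_0 = \int_{\S} 1\, d\P = 1$. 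Ville's maximal inequality yields $\pP_{\bp}(\exists t \colon M_t \geq (K-1)/\delta) \leq \delta/(K-1)$, so $\pP_{\bp}(\tau_\delta < \infty,\ \hat{a}_{\tau_\delta} \neq 1) \leq \delta/(K-1) \leq \delta$, which is $\delta$-PC.

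The main obstacle is the martingale step: one must recognize $M_t$ as a method-of-mixtures (Wald-type) martingale and justify the interchange of the Dirichlet average with the conditional expectation over the next sample. This is exactly where the design choice in $Z_{a,b}$ pays off—using the \emph{unconditional} average likelihood of~\eqref{eq:Zab_identityless} (rather than the conditional average of~\cite{karthik2020}) makes $M_t$ an honest martingale started at $1$, which is what lets Ville's inequality be applied directly with the clean threshold $\beta = \log((K-1)/\delta)$; indeed the argument even delivers the sharper bound $\delta/(K-1)$.
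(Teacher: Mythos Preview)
Your argument is correct and rests on the same two key observations as the paper's proof: on the error event with output $a \neq 1$, the stopping rule together with Lemma~\ref{Lemma:Z} forces $Z_{a,1}(\tau_\delta) > \beta$, and since the true $\bp$ satisfies $p_a < p_1$ it is feasible in the denominator of $Z_{a,1}$, giving $Z_{a,1}(t) \leq \log\bigl(\eE_{\bp'\sim\P}[\mathcal{L}_{\bp'}(\bx^t)]/\mathcal{L}_{\bp}(\bx^t)\bigr)$. Where you diverge is in how you close the argument. The paper decomposes the error event over wrong outputs $b$, writes the true likelihood summed over those paths, multiplies and divides by the mixture likelihood, and bounds the residual sum by $1$ for each $b$ (as a probability under the mixture measure), obtaining $(K-1)e^{-\beta}=\delta$. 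You instead recognize $M_t = \eE_{\bp'\sim\P}[\mathcal{L}_{\bp'}(\bx^t)]/\mathcal{L}_{\bp}(\bx^t)$ as a single nonnegative $\pP_{\bp}$-martingale with $M_0=1$, independent of which wrong $a$ was output, and apply Ville's maximal inequality once. Your packaging is cleaner, makes the method-of-mixtures martingale structure explicit, and yields the sharper bound $\delta/(K-1)$; the paper's change-of-measure sum is essentially Ville's inequality unrolled, but it pays an unnecessary factor $K-1$ by bounding each $b$-term by $1$ separately rather than exploiting that the events $\{a^*=b\}$ are disjoint under the mixture measure.
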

The proof of Theorem~\ref{thm:correctness_identityless}, which uses
arguments similar to those in~\cite{garivier2016optimal,karthik2020},
is presented in~\ref{appendix:identityless}. Next, we establish that the
\nialgo\ algorithm is asymptotically optimal as $\delta \da 0.$
  \begin{theorem}
    \label{thm:optimality_identityless}
    Given $\delta \in (0,1),$ under the identityless sampling model,
    the stopping time $\tau_\delta$ (see~\eqref{stopping_time}) for the
    \nialgo\ algorithm (see Algorithm~\ref{alg:identityless})
    satisfies $\pP(\tau_\delta < \infty) =1.$
    Moreover,
    \begin{align}\label{eq:asymp_op_1_iless}
      \pP\left( \limsup_{\delta \to 0}
      \frac{\tau_\delta}{\log(1/\delta)} \leq
      (g^*(\boldsymbol{p}))^{-1}\right) = 1, 
    \end{align}
    and 
    \begin{align}\label{eq:asymp_op_2_iless}
    	 \lim\sup_{\delta \to 0} \frac{\eE_[\tau_\delta]}{\log(\frac{1}{\delta})} \leq (g^*(\boldsymbol{p}))^{-1}.
   \end{align}
    where $g^*(\boldsymbol{p}) = (p_{1} +
    p_{2})kl\left(\frac{p_{1}}{p_{1} + p_{2}}, \frac{1}{2}\right) $.
  \end{theorem}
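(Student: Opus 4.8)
The plan is to show that the stopping statistic $Z(t)$ grows linearly in~$t$ with slope $g^*(\boldsymbol{p})$, so that the threshold $\beta(t,\delta) = \log((K-1)/\delta)$ (which does not depend on~$t$) is crossed at a time of order $\log(1/\delta)/g^*(\boldsymbol{p})$. First I would turn the closed form for $Z(t)$ (combining Lemma~\ref{Lemma:Z} and Lemma~\ref{simple_Zabt_iless}) into an asymptotic expression. Writing the beta-function term as $\log\left[(K-1)!\,\prod_i N_i(t)!\,/\,(t+K-1)!\right]$ and applying Stirling's formula, the leading term is $t\sum_i \hat p_i(t)\log \hat p_i(t)$, and after cancellation with the remaining summands one obtains
$$\frac{Z(t)}{t} = \bigl(\hat p_{\hat a_t}(t)+\hat p_{\hat b_t}(t)\bigr)\, kl\!\left(\frac{\hat p_{\hat a_t}(t)}{\hat p_{\hat a_t}(t)+\hat p_{\hat b_t}(t)},\ \tfrac12\right) + O\!\left(\frac{\log t}{t}\right),$$
where the $O(\log t/t)$ remainder collects the Stirling corrections. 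I would verify the underlying algebraic identity that $\hat p_a\log\hat p_a + \hat p_b\log\hat p_b - (\hat p_a+\hat p_b)\log\frac{\hat p_a+\hat p_b}{2}$ equals $(\hat p_a+\hat p_b)\,kl\bigl(\frac{\hat p_a}{\hat p_a+\hat p_b},\tfrac12\bigr)$, exactly as in the computation defining $g^*$.

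Next I would handle the almost-sure statements. By the strong law of large numbers, $\hat p_i(t)\to p_i$ almost surely for every $i\in\K$; since $p_1$ is the strict maximum and $p_2$ the second largest value, we get $\hat p_{\hat a_t}(t)\to p_1$ and $\hat p_{\hat b_t}(t)\to p_2$ on this event (the identity of $\hat b_t$ may fluctuate among communities tied at the second-largest value, but its \emph{value} converges). Continuity of the $kl$ expression then yields $Z(t)/t\to g^*(\boldsymbol{p})$ almost surely, and since $p_1>p_2$ forces $kl\bigl(\frac{p_1}{p_1+p_2},\tfrac12\bigr)>0$ and hence $g^*(\boldsymbol{p})>0$, we have $Z(t)\to\infty$; as the threshold is bounded in~$t$, this already gives $\pP(\tau_\delta<\infty)=1$. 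For the rate, fix $\epsilon\in(0,g^*(\boldsymbol{p}))$; on the convergence event there is a random $T_\epsilon$ with $Z(t)\ge t(g^*(\boldsymbol{p})-\epsilon)$ for all $t\ge T_\epsilon$, so the first crossing satisfies $\tau_\delta \le \max\{T_\epsilon,\ \lceil \log((K-1)/\delta)/(g^*(\boldsymbol{p})-\epsilon)\rceil\}$. Dividing by $\log(1/\delta)$, letting $\delta\da 0$ and then $\epsilon\da 0$, establishes \eqref{eq:asymp_op_1_iless}.

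The expectation bound \eqref{eq:asymp_op_2_iless} requires upgrading the almost-sure rate, which I would do via a summable-tail argument in the style of~\cite{garivier2016optimal}. For fixed $\epsilon$, choose a tolerance $\eta=\eta(\epsilon)$ so that $\max_i |\hat p_i(t)-p_i|\le \eta$ implies, through the closed form and continuity (with the $O(\log t)$ Stirling term absorbed for $t$ large), that $Z(t)\ge t(g^*(\boldsymbol{p})-\epsilon)$. Setting $T_\delta(\epsilon):=\lceil \log((K-1)/\delta)/(g^*(\boldsymbol{p})-\epsilon)\rceil$, for every $t\ge T_\delta(\epsilon)$ this good concentration event forces $Z(t)>\beta(t,\delta)$ and hence $\tau_\delta\le t$, so $\{\tau_\delta>t\}\subseteq\{\max_i|\hat p_i(t)-p_i|>\eta\}$. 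A Hoeffding/Chernoff bound gives $\pP(\max_i|\hat p_i(t)-p_i|>\eta)\le C_1 e^{-C_2 t}$ with $C_1,C_2$ depending only on $\eta$ and $K$, whence
$$\eE[\tau_\delta] = \sum_{t\ge 0}\pP(\tau_\delta>t) \le T_\delta(\epsilon) + \sum_{t\ge T_\delta(\epsilon)} C_1 e^{-C_2 t} \le T_\delta(\epsilon) + O(1).$$
Dividing by $\log(1/\delta)$, sending $\delta\da 0$ and then $\epsilon\da 0$, yields \eqref{eq:asymp_op_2_iless}.

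I expect the main obstacle to be this last concentration step: I must ensure the $O(\log t)$ Stirling correction in $Z(t)$ is genuinely negligible relative to $t\epsilon$ \emph{uniformly} on the good event (in particular controlling any small-$N_i(t)$ regime where $\hat p_i(t)$ is near~$0$), and simultaneously that the deviation event has a tail summable over~$t$ with constants independent of~$\delta$. The delicate interplay is between the deterministic crossing time $T_\delta(\epsilon)$, which grows like $\log(1/\delta)$, and the exponentially small per-$t$ deviation probabilities; it is precisely this separation of scales that makes the additive $O(1)$ remainder vanish after normalization by $\log(1/\delta)$.
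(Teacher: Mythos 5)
Your proposal is correct, and its first half (finiteness and the almost-sure bound \eqref{eq:asymp_op_1_iless}) is essentially the paper's own argument: Stirling expansion of the closed form of $Z(t)$, the algebraic identity reducing the limiting slope to $g^*(\boldsymbol{p})$, the SLLN event, and the crossing-time bound $\tau_\delta \leq \max\{T_\epsilon,\ \lceil\log((K-1)/\delta)/(g^*(\boldsymbol{p})-\epsilon)\rceil\}$; you are in fact slightly more careful than the paper in noting that only the \emph{value} $\hat p_{\hat b_t}(t)$, not the identity of $\hat b_t$, needs to converge. For the expectation bound \eqref{eq:asymp_op_2_iless} you use the same ingredients (quantitative linear growth of $Z(t)$ on a concentration event, plus Hoeffding) but a leaner decomposition: you bound $\pP(\tau_\delta > t)$ for each individual $t \geq T_\delta(\epsilon)$ by the per-$t$ deviation probability $\pP(\max_i|\hat p_i(t)-p_i|>\eta)\le C_1 e^{-C_2 t}$ and sum the geometric tail. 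The paper instead follows the Garivier--Kaufmann template: it introduces windowed events $\mathcal{E}_T=\bigcap_{t=T^{\zeta}}^{T}\{\hat{\boldsymbol{p}}(t)\in I_{\xi}\}$, proves $\mathcal{E}_T\subseteq\{\tau_\delta\le T\}$ once $T$ exceeds a deterministic $T_0(\delta)$, and bounds $\eE[\tau_\delta]\le T_0(\delta)+T_2+\sum_{T}\pP(\mathcal{E}_T^c)$ via a union bound over the window. That windowed construction is overhead inherited from the bandit setting, where the stopping statistic depends on an adaptively sampled history; here $Z(t)$ is a function of the time-$t$ empirical frequencies alone, so your per-$t$ containment suffices and yields the same constant with less bookkeeping. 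The obstacle you flag at the end is real but resolves exactly along the lines you indicate: choose $\eta$ smaller than both $\tfrac12\min_i p_i$ and $\tfrac14(p_1-p_2)$ (in addition to the $\epsilon$-tolerance); then on the good event $\hat a_t=1$, every $N_i(t)\ge t p_i/2$ grows linearly in $t$, and the Stirling remainders are $O(\log t)$ with constants depending only on $K$ and $\boldsymbol{p}$, hence dominated by $t\epsilon/2$ beyond a deterministic $t^*(\epsilon,\eta)$ independent of $\delta$ --- which is all your tail sum needs, since $t^*$ contributes only an additive $O(1)$ after normalization by $\log(1/\delta)$.
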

  In light of Lemma~\ref{lemma:lb_indentity-less},
  Theorem~\ref{thm:optimality_identityless} establishes that
  \nialgo\ has an expected stopping that (multiplicatively) matches
  the information theoretic lower bound as $\delta \da 0.$ Moreover,
  the same scaling is also established in an almost sure sense. These
  results are structurally analogous to the best optimality guarantees
  that are available in the MAB setting. The proof of
  Theorem~\ref{thm:optimality_identityless}, which also uses arguments
  similar to those in~\cite{garivier2016optimal,karthik2020}, is
  presented in~\ref{appendix:identityless}.

\subsection{A one-versus-one variant of \nialgo}
\label{sec:NI-ME-1v1}

While the \nialgo\ algorithm is asymptotically optimal as $\delta \da
0,$ we now present a variant of this algorithm that performs even
better for moderate choices of~$\delta.$ Specifically, this variant
employs pairwise comparisons between communities (each such pairwise
comparison being based only on the samples generated from those
communities).

Formally, define the `pairwise' statistic $$\tilde{Z}_{a,b}(t) :=
\log(B(N_a(t) + 1,N_b(t)+1)) - (N_a(t) +
N_{b}(t))\log\left(\frac{1}{2}\right).$$ Note that
$\tilde{Z}_{a,b}(t)$ is simply what $Z_{a,b}(t)$ (as defined
in~\eqref{eq:Zab_identityless}) would be in a hypothetical instance
consisting of only the two communities~$a$ and~$b$ (with observations
from other communities being disregarded).

The proposed variant of \nialgo, which we refer to as \nialgo-1v1,
stops sampling when
\begin{align}
  \label{eq:nimeivi_stopping}
  \tilde{Z}(t) := \tilde{Z}_{\hat{a}_t,\hat{b}_t}(t) > \beta(t,\delta),
\end{align}
declaring $\hat{a}_t$ as the mode. It is easy to see that
$\tilde{Z}_{\hat{a}_t,\hat{b}_t}(t) \leq \tilde{Z}_{\hat{a}_t,b}(t)$
for all $b \neq \hat{a}_t,$ meaning the stopping
criterion~\eqref{eq:nimeivi_stopping} is equivalent to
$\tilde{Z}_{\hat{a}_t,b}(t) > \beta(t,\delta)$ for all $b \neq
\hat{a}_t.$

\begin{remark}
  \label{remark:ppr}
Remarkably, \nialgo-1v1 as defined above is identical to the PPR-1v1
algorithm proposed in~\cite{Jain2022}. The latter algorithm is
designed on the basis of very distinct machinery--confidence intervals
obtained via prior-posterior-ratios (PPRs). This suggests an
interesting connection between certain types of GLR based stopping
criteria and confidence intervals; formalizing and exploiting this
connection in the context of PAC learning is a promising avenue for
future work.
\end{remark}

We now establish performance guarantees for \nialgo-1v1. The soundness
of the algorithm follows directly from the soundness of~\nialgo.
\begin{lemma}
\label{thm:correctness_1v1_identityless}
Let $\delta \in (0,1).$ Under the identityless sampling model, the
\nialgo-1v1 algorithm is $\delta$-PC.
\end{lemma}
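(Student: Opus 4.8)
The plan is to exploit the observation, already flagged in the text, that $\tilde{Z}_{a,b}(t)$ is exactly the statistic $Z_{a,b}(t)$ that one would obtain in a fictitious instance containing only the two communities $a$ and $b$. Consequently, the soundness argument behind Theorem~\ref{thm:correctness_identityless}, specialized to a two-community problem, should yield the required per-pair false-stop bound, and a union bound over competitors then closes the argument.

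First I would reduce the error event to a collection of per-pair events. Since community~$1$ is the true mode, an error occurs only if the algorithm stops at some time $t$ with $\hat{a}_t = a$ for some $a \neq 1$. By the equivalence noted after~\eqref{eq:nimeivi_stopping}, stopping with $\hat{a}_t = a$ forces $\tilde{Z}_{a,b}(t) > \beta(t,\delta)$ for every $b \neq a$; taking $b = 1$ gives $\tilde{Z}_{a,1}(t) > \beta(t,\delta)$. Hence
\begin{equation*}
\{\tau_\delta < \infty,\ \hat{a}_{\tau_\delta} \neq 1\} \subseteq \bigcup_{a \neq 1} \{\exists t \in \nN \colon \tilde{Z}_{a,1}(t) > \beta(t,\delta)\},
\end{equation*}
so it suffices to bound $\pP(\exists t \colon \tilde{Z}_{a,1}(t) > \beta)$ by $\delta/(K-1)$ for each $a \neq 1$.

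Next I would establish this per-pair bound through a mixture-martingale argument mirroring the proof of Theorem~\ref{thm:correctness_identityless}. Writing $r_a = p_a/(p_a+p_1)$ and $r_1 = 1 - r_a$, for each fixed $q \in [0,1]$ the ratio $L^{(q)}_t := q^{N_a(t)}(1-q)^{N_1(t)} / (r_a^{N_a(t)} r_1^{N_1(t)})$ is a $\bp$-martingale with respect to the natural filtration: conditioning on the sample at time $t+1$ and using $p_a/r_a = p_1/r_1 = p_a + p_1$, the conditional expectation collapses to $L^{(q)}_t$. Averaging over $q \sim \mathrm{Unif}[0,1]$ yields the nonnegative martingale $\tilde{M}_t = B(N_a(t)+1, N_1(t)+1)/(r_a^{N_a(t)} r_1^{N_1(t)})$ with $\tilde{M}_0 = 1$. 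Since $p_a < p_1$ gives $r_a < 1/2$, the true conditional parameter lies in the constraint region $\{q \le 1/2\}$, so $\max_{q \le 1/2} q^{N_a(t)}(1-q)^{N_1(t)} \ge r_a^{N_a(t)} r_1^{N_1(t)}$ and therefore $\tilde{Z}_{a,1}(t) \le \log \tilde{M}_t$. Ville's maximal inequality then gives $\pP(\exists t \colon \tilde{Z}_{a,1}(t) > \beta) \le \pP(\sup_t \tilde{M}_t > e^{\beta}) \le e^{-\beta} = \delta/(K-1)$, and the union bound over the $K-1$ competitors delivers the claimed $\delta$-PC guarantee.

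The main obstacle is the embedding subtlety hidden in the phrase ``follows directly'': although $\tilde{Z}_{a,1}(t)$ looks like a two-community statistic, the counts $(N_a(t), N_1(t))$ are driven by the full $K$-community sampling, so between consecutive samples from $\{a,1\}$ the statistic stays frozen while $t$ advances. The clean way to handle this is precisely the computation above, which verifies the martingale property with respect to the \emph{full} filtration (samples outside $\{a,1\}$ leave $L^{(q)}_t$ unchanged), so that Ville's inequality applies over the original index $t$ without passing to the random subsequence of $\{a,1\}$-epochs. Equivalently, one may observe that the sub-sampled process is itself an i.i.d. two-community process with parameters $(r_a, r_1)$ and invoke Theorem~\ref{thm:correctness_identityless} for $K=2$ with confidence $\delta/(K-1)$, noting that the threshold $\log(1/(\delta/(K-1)))$ coincides with $\beta(t,\delta) = \log((K-1)/\delta)$; the only thing left to check is that the time change does not affect the supremum over $t$, which holds because $\tilde{Z}_{a,1}$ changes value only at $\{a,1\}$-epochs.
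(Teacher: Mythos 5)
Your overall route is genuinely different from the paper's: instead of mapping the pair-$(a,1)$ subsequence onto a fictitious two-community instance and invoking Theorem~\ref{thm:correctness_identityless} there with threshold $\delta/(K-1)$ (which is what the paper does), you run a self-contained mixture-martingale argument, and your verification that $L^{(q)}_t$ is a martingale with respect to the \emph{full} filtration (so that Ville's inequality applies over the original time index, with no time change) is correct and in fact cleaner than the paper's subsequence bookkeeping. However, there is a genuine gap in the step ``$\max_{q \le 1/2} q^{N_a(t)}(1-q)^{N_1(t)} \ge r_a^{N_a(t)} r_1^{N_1(t)}$ and therefore $\tilde{Z}_{a,1}(t) \le \log \tilde{M}_t$.'' That ``therefore'' silently identifies $\tilde{Z}_{a,1}(t)$ with the constrained GLR quantity $\log\bigl(B(N_a(t)+1,N_1(t)+1)\bigr)-\log\bigl(\max_{q\le 1/2} q^{N_a(t)}(1-q)^{N_1(t)}\bigr)$. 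But the algorithm's statistic is the closed form $\log B(N_a(t)+1,N_1(t)+1)+(N_a(t)+N_1(t))\log 2$, which is \emph{symmetric} in the two counts and, by Lemma~\ref{simple_Zabt_iless}, equals the GLR quantity only when $N_a(t)\ge N_1(t)$; when $N_a(t)<N_1(t)$ the constrained maximum is attained at the unconstrained MLE $N_a(t)/(N_a(t)+N_1(t))<1/2$ and the closed form is strictly larger. In that regime your domination fails outright: with $N_a(t)=0$, $N_1(t)=n$, and $r_1$ close to $1$, one has $\tilde{Z}_{a,1}(t)=n\log 2-\log(n+1)\to\infty$ while $\log\tilde{M}_t=n\log(1/r_1)-\log(n+1)$ grows arbitrarily slowly. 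This is not a corner case but the \emph{typical} regime, since community $1$ is the true mode and $N_1(t)\gg N_a(t)$ eventually; consequently your per-pair event $\{\exists t: \tilde{Z}_{a,1}(t)>\beta\}$, read with the algorithm's actual definition of $\tilde{Z}_{a,1}$, has probability tending to one, not $\delta/(K-1)$, and the union bound collapses.

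The repair is small but essential: your step-1 reduction discards the information that the algorithm can only stop declaring $a$ at a time when $\hat{a}_t=a$, i.e.\ when $N_a(t)\ge N_b(t)$ for all $b$. Retaining it gives the containment of the error event in $\bigcup_{a\ne 1}\{\exists t:\ N_a(t)\ge N_1(t) \text{ and } \tilde{Z}_{a,1}(t)>\beta(t,\delta)\}$, and on the event $\{N_a(t)\ge N_1(t)\}$ the closed form coincides with the GLR quantity, so your chain $\tilde{Z}_{a,1}(t)\le\log\tilde{M}_t$ holds there (directly: $N_a\log(2r_a)+N_1\log(2r_1)\le N_1\log(4r_ar_1)\le 0$ since $\log(2r_a)\le 0$, $N_a\ge N_1$, and $r_ar_1\le 1/4$). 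With that one observation inserted, your martingale construction and Ville's inequality finish the proof exactly as written, and the result is a valid, more elementary alternative to the paper's proof. To be fair, the imprecision you inherited is present in the paper's own prose (its claim that $\tilde{Z}_{a,b}(t)$ ``is simply'' the two-community $Z_{a,b}(t)$, and the equality $(a)$ in its proof, also implicitly require the count ordering); but the paper's Lemma~\ref{simple_Zabt_iless} states the needed condition explicitly, and a complete argument must use it.
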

\begin{proof}

  Let~$t_{a,b}(k)$ denote the subsequence of observations pertaining
  only to communities~$a$ and~$b.$ The key property we exploit here is
  that under our sampling model, this subsequence can be treated as
  being generated by a 2-community instance consisting only of~$d_a$
  individuals from community~$a$ and~$d_b$ individuals from
  community~$b.$ The fact that the \nialgo\ algorithm is $\delta$-PC over such
  2-community instances implies the soundness of \nialgo-1v1 in the
  original~$K$-community instance.
  
  Let $\hat{a}$ denote the estimated mode by \nialgo-1v1.   
  \begin{align*}
    \pP(\hat{a} \neq 1) & \leq \pP( \exists b \in \mathcal{K}\setminus \{ 1\}, \exists t \in \nN, \forall a \in \mathcal{K}\setminus \{ b\}: \tilde{Z}_{b,a}(t) > \beta(t,\delta) ) \\
    & = \sum_{b \in \mathcal{K}\setminus \{ 1\}} \pP\left(\exists k \in \nN, \forall a \in \mathcal{K}\setminus \{ b\}: \tilde{Z}_{b,a}(t_{a,b}(k)) > \log\left(\frac{K-1}{\delta}\right) \right) \\
    & \overset{(a)}{=}  \sum_{b \in \mathcal{K}\setminus \{ 1\}} \pP\left(\exists k \in \nN, \forall a \in \mathcal{K}\setminus \{ b\}: Z^{(2)}_{b,a}(k) > \log\left(\frac{K-1}{\delta}\right) \right)\\
    & \overset{(b)}{\leq} \sum_{b \in \mathcal{K}\setminus \{ 1\}} \frac{\delta}{K-1} =  \delta. 
  \end{align*}
  In~$(a),$ $Z^{(2)}_{a,b}(k)$ denotes the $Z_{a,b}(\cdot)$ statistic,
  computed after~$k$ observations, in a hypothetical 2-community
  instance consisting only of~$d_a$ individuals from community~$a$
  and~$d_b$ individuals from community~$b.$ The inequality $(b)$
  follows the soundness of \nialgo\ over such two-community
  instances. (Note that the error threshold that is imposed on these
  2-community instances is $\nicefrac{\delta}{K-1}.$)
\end{proof}
Having established soundness, we now compare the stopping times under
\nialgo\ and \nialgo-1v1.
\begin{lemma}
  \label{lemma:iless_1v1_versus_nime}
  For all~$t,$ $\tilde{Z}(t) \geq Z(t).$ As a consequence, on any
  sample path, \nialgo-1v1 terminates no later than \nialgo.
\end{lemma}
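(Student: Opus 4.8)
The plan is to prove the pointwise inequality $\tilde Z(t) \geq Z(t)$ for every $t$; the stated sample-path consequence then follows at once. Indeed, \nialgo\ and \nialgo-1v1 run on the \emph{same} observation sequence, so they compute the same $\hat a_t,\hat b_t$ and compare their respective statistics against the \emph{same} threshold $\beta(t,\delta)$. Hence $\{t : Z(t) > \beta(t,\delta)\} \subseteq \{t : \tilde Z(t) > \beta(t,\delta)\}$, and the infimum defining the \nialgo-1v1 stopping time is no larger than that of \nialgo.

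For the main inequality, fix $t$ and write $a = \hat a_t$, $b = \hat b_t$, so that $\hat{p}_a(t) \geq \hat{p}_b(t)$ and, by Lemma~\ref{Lemma:Z}, $Z(t) = Z_{a,b}(t)$. Recalling the likelihood-ratio definition~\eqref{eq:Zab_identityless}, I would write $Z_{a,b}(t) = \log(A/D)$ with $A = \eE_{\bp' \sim \P}[\mathcal{L}_{\bp'}(\bx^t)]$ the average likelihood and $D = \max_{\bp' \colon p'_a \leq p'_b} \mathcal{L}_{\bp'}(\bx^t)$ the constrained maximum. Likewise $\tilde Z_{a,b}(t) = \log(\tilde A/\tilde D)$, where $\tilde A$ and $\tilde D$ are the corresponding quantities for the two-community instance built from $a$ and $b$ alone; explicitly $\tilde A = B(N_a(t)+1,N_b(t)+1)$ and, since $N_a(t) \geq N_b(t)$, $\tilde D = (1/2)^{N_a(t)+N_b(t)}$. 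Since $\tilde Z_{a,b}(t) - Z_{a,b}(t) = \log(D/\tilde D) - \log(A/\tilde A)$, it suffices to prove $A/\tilde A \leq D/\tilde D$.

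For the denominators, Lemma~\ref{simple_Zabt_iless} supplies the constrained maximiser in closed form ($p'_i = \hat{p}_i(t)$ for $i \neq a,b$ and $p'_a = p'_b = (\hat{p}_a(t)+\hat{p}_b(t))/2$), so that $D/\tilde D = \prod_{i \neq a,b} \hat{p}_i(t)^{N_i(t)} (\hat{p}_a(t)+\hat{p}_b(t))^{N_a(t)+N_b(t)}$. Merging $a$ and $b$ into one category yields $K-1$ merged counts $(m_c)_c$ summing to $t$, and this product is exactly $\prod_c (m_c/t)^{m_c}$, i.e. the \emph{maximum} of $\bq \mapsto \prod_c q_c^{m_c}$ over the $(K-1)$-dimensional simplex, attained at $\hat q_c = m_c/t$. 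For the numerators, the key step is a Dirichlet-aggregation identity: if $\bp' \sim \P = \mathrm{Dir}(1,\ldots,1)$, then replacing $(p'_a,p'_b)$ by $p'_a + p'_b$ gives a $\mathrm{Dir}(1,\ldots,1,2)$ vector on the $(K-1)$-simplex, while $u := p'_a/(p'_a+p'_b)$ is an independent $\mathrm{Unif}[0,1]$. Substituting $p'_a = (p'_a+p'_b)u$ and $p'_b = (p'_a+p'_b)(1-u)$ factorises the likelihood into $A = \tilde A \cdot A_{\mathrm{merged}}$, where $A_{\mathrm{merged}} = \eE_{\bq \sim \mathrm{Dir}(1,\ldots,1,2)}[\prod_c q_c^{m_c}]$; here I use that the $u$-integral is precisely $\int_0^1 u^{N_a(t)}(1-u)^{N_b(t)}\,du = \tilde A$.

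With both reductions in hand the inequality is immediate: $A/\tilde A = A_{\mathrm{merged}}$ is an \emph{average} of $\bq \mapsto \prod_c q_c^{m_c}$ against a probability measure on the simplex, hence at most its \emph{maximum}, which is exactly $D/\tilde D$. This gives $A/\tilde A \leq D/\tilde D$ and therefore $\tilde Z_{a,b}(t) \geq Z_{a,b}(t)$, completing the proof. I expect the one genuinely delicate step to be the numerator factorisation---verifying the Dirichlet aggregation/independence decomposition and recognising that the leftover $u$-integral reproduces $\tilde A$ exactly, so that the surviving factor $A_{\mathrm{merged}}$ is an average of the \emph{very same} function whose maximum appears in the denominator ratio. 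Once that is set up, the closed form for $D$ from Lemma~\ref{simple_Zabt_iless} and the elementary ``average $\leq$ maximum'' bound finish the argument routinely.
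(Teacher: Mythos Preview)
Your proof is correct. The Dirichlet aggregation step is the crux and is stated and used accurately: if $(p'_1,\ldots,p'_K)\sim\mathrm{Dir}(1,\ldots,1)$ then $u=p'_a/(p'_a+p'_b)\sim\mathrm{Unif}[0,1]$ independently of the merged vector $\bq\sim\mathrm{Dir}(1,\ldots,1,2)$, which yields the clean factorisation $A=\tilde A\cdot A_{\mathrm{merged}}$. The denominator identification $D/\tilde D=\max_{\bq}\prod_c q_c^{m_c}$ via Lemma~\ref{simple_Zabt_iless} is also right, and the final ``expectation $\leq$ supremum'' bound closes the argument.

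The paper's proof is not an independent argument: it simply cites Appendix~C.3.2 of \cite{Jain2022} and notes that a logarithmic transformation is needed. Your route is therefore genuinely more self-contained. The content is presumably close to what \cite{Jain2022} does (that appendix compares PPR-1v1 to PPR-1vr, which in the present notation is exactly the $\tilde Z$ vs.\ $Z$ comparison), but your organisation---reducing $\tilde Z-Z\geq 0$ to $A/\tilde A\leq D/\tilde D$, then recognising both sides as an average and a maximum of the \emph{same} merged-likelihood function on the $(K{-}1)$-simplex---makes the mechanism transparent without appealing to an external reference. One cosmetic remark: the claim that \nialgo\ and \nialgo-1v1 ``compute the same $\hat a_t,\hat b_t$'' is fine since both use $N_i(t)$, but strictly speaking ties may be broken differently; this is harmless because the inequality $\tilde Z_{\hat a_t,\hat b_t}(t)\geq Z_{\hat a_t,\hat b_t}(t)$ holds for \emph{any} choice of $(\hat a_t,\hat b_t)$ with $N_{\hat a_t}(t)\geq N_{\hat b_t}(t)$.
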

\begin{proof}
  That $\tilde{Z}(t) \geq Z(t)$ follows, via a logarithmic
  transformation, from the analysis in Appendix C.3.2 of~\cite{Jain2022}.
\end{proof}
In practice, we find that \nialgo-1v1 often terminates significantly
faster than \nialgo\ for moderate choices of~$\delta;$ see
Section~\ref{sec:results}. Lemma~\ref{lemma:iless_1v1_versus_nime}
also implies that \nialgo-1v1 is asymptotically optimal, in light of
the corresponding guarantee for \nialgo.
\begin{corollary}
  The stopping time under the \nialgo-1v1 algorithm satisfies the
  asymptotic optimality guarantees of
  Theorem~\ref{thm:optimality_identityless}.
\end{corollary}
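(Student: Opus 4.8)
The plan is to leverage the sample-path domination already established in Lemma~\ref{lemma:iless_1v1_versus_nime} and simply transfer, one by one, the three conclusions of Theorem~\ref{thm:optimality_identityless} from \nialgo\ to \nialgo-1v1. Write $\tau_\delta$ for the \nialgo\ stopping time and $\tilde{\tau}_\delta := \inf\{t \in \nN \colon \tilde{Z}(t) > \beta(t,\delta)\}$ for the \nialgo-1v1 stopping time. The crucial observation is that both rules compare their respective statistics against the \emph{same} threshold $\beta(t,\delta) = \log((K-1)/\delta)$, and that the inequality $\tilde{Z}(t) \ge Z(t)$ supplied by Lemma~\ref{lemma:iless_1v1_versus_nime} holds deterministically, i.e. for every realization of the observation sequence and every $t$. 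Consequently, on any fixed sample path the first time $\tilde{Z}$ exceeds the threshold cannot be later than the first time $Z$ does, so $\tilde{\tau}_\delta \le \tau_\delta$; moreover this domination holds \emph{simultaneously for all} $\delta$ on the same sample path, since $Z$ and $\tilde{Z}$ are $\delta$-independent and only the (common) threshold varies with $\delta$.

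Given this, I would transfer the guarantees individually. First, finiteness: since $\pP(\tau_\delta < \infty) = 1$ by Theorem~\ref{thm:optimality_identityless} and $\tilde{\tau}_\delta \le \tau_\delta$ pointwise, we immediately get $\pP(\tilde{\tau}_\delta < \infty) = 1$. Second, the almost-sure rate: let $A$ be the probability-one event on which $\limsup_{\delta \to 0} \tau_\delta / \log(1/\delta) \le (g^*(\bp))^{-1}$. On each $\omega \in A$ we have $\tilde{\tau}_\delta(\omega) \le \tau_\delta(\omega)$ for every $\delta$, so dividing by $\log(1/\delta) > 0$ and using monotonicity of $\limsup$ gives $\limsup_{\delta\to 0} \tilde{\tau}_\delta(\omega)/\log(1/\delta) \le (g^*(\bp))^{-1}$; hence $A$ is contained in the corresponding event for $\tilde{\tau}_\delta$, which therefore also has probability one, reproducing~\eqref{eq:asymp_op_1_iless}. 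Third, the expectation rate: pointwise domination yields $\eE[\tilde{\tau}_\delta] \le \eE[\tau_\delta]$, whence dividing by $\log(1/\delta)$ and taking $\limsup$ preserves the bound $\limsup_{\delta \to 0} \eE[\tilde{\tau}_\delta]/\log(1/\delta) \le (g^*(\bp))^{-1}$, reproducing~\eqref{eq:asymp_op_2_iless}.

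There is essentially no hard analytic step remaining: all the real work was done in Lemma~\ref{lemma:iless_1v1_versus_nime} (the sample-path inequality $\tilde{Z} \ge Z$) and in Theorem~\ref{thm:optimality_identityless} (the optimality of \nialgo). The one point that deserves care — and the only place a careless argument could go wrong — is that the domination $\tilde{\tau}_\delta \le \tau_\delta$ must hold \emph{jointly across all} $\delta$ on a single sample path, not merely marginally for each fixed $\delta$; this is precisely what licenses pushing $\limsup_{\delta\to 0}$ inside the almost-sure event $A$ in the second step. Fortunately, this joint domination is automatic here, because $Z(t)$ and $\tilde{Z}(t)$ do not depend on $\delta$ and the two algorithms share the identical threshold $\beta(t,\delta)$, so the crossing-time comparison is valid uniformly in $\delta$.
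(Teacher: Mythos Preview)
Your proposal is correct and takes essentially the same approach as the paper: the paper simply notes that the corollary is an immediate consequence of Lemma~\ref{lemma:iless_1v1_versus_nime} together with Theorem~\ref{thm:optimality_identityless}, and you have written out precisely that transfer argument in full detail. Your added remark that the sample-path domination $\tilde{\tau}_\delta \le \tau_\delta$ holds jointly across all $\delta$ (because $Z$, $\tilde{Z}$, and the threshold structure are $\delta$-independent in the right way) is a nice point of care that the paper leaves implicit.
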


\ignore{
\subsection{Stopping rule interpretation}
Consider communities ${a,b} \in \K$. Let
\begin{align*}
\mathcal{L}_{a,b}(t) := (q_{a,b})^{N_a(t)}(1-q_{a,b})^{N_b(t)}, 
\end{align*}
where $q_{a,b}:= \frac{p_a}{(p_a + p_b)}.$ 
Let $\S_2$ denote a two-dimensional simplex. Let $\P_2$ be a prior distribution on $\S_2$. We consider  
\begin{align}\label{eq:Zabt_tilde}
	\tilde{Z}_{a,b}(t) := \log\frac{ \eE_{q' \sim \P_2}[\mathcal{L}_{a,b}(t)] }{\max_{q' \in [0,1]: q' \leq \frac{1}{2}}\mathcal{L}_{a,b}(t)}.
\end{align}
\begin{lemma}\label{lemma:Z_tilde_abt}
	When $N_a(t)>N_b(t)$, 
	\begin{align*}
		\tilde{Z}_{a,b}(t) & = \log(B(N_a(t) + 1,N_b(t)+1)) - (N_a(t) + N_b(t))\log\left(\frac{1}{2}\right) \text{ and }\\
		\tilde{Z}_{b,a}(t) & = \log(B(N_a(t) + 1,N_b(t)+1)) - N_a(t)\log(\hat{q}_{a,b}(t)) -N_b(t)\log(1-\hat{q}_{a,b}(t)) < \tilde{Z}_{a,b}(t), 
	\end{align*}
	where $\hat{q}_{a,b}(t) = \frac{N_a(t)}{N_a(t)+ N_b(t)}.$ 
\end{lemma}
\begin{proof}
Let $q' \in \S_2$. In \eqref{eq:Zabt_tilde}, we consider a Dirichlet prior on $\S_2$ which is same as the uniform distribution. So, 
\begin{align}
	\eE_{q' \sim \P_2}[\mathcal{L}_{a,b}(t)] & = \int_{q' \in [0,1]}q'^{N_a(t)}(1-q')^{N_b(t)}dq' \nonumber \\
	& \overset{(a)}{=} B(N_a(t) + 1,N_b(t))\int_{q' \in [0,1]}\frac{q'^{N_a(t)}(1-q')^{N_b(t)}dq'}{B(N_a(t) + 1,N_b(t))} \nonumber \\
	& = B(N_a(t) + 1,N_b(t)+1). \label{num_Zabt_tilde}
\end{align}
The integrand in $(a)$ is the pdf of a Beta distribution and integrates to one. In \eqref{eq:Zabt_tilde}, when $N_a(t)>N_b(t)$,  
\begin{align}
	\max_{q' \in [0,1]: q' \leq \frac{1}{2}}\mathcal{L}_{a,b}(t) & = \max_{q' \in [0,1]: q' \leq \frac{1}{2}} q'^{N_a(t)}(1-q')^{N_b(t)} = \left(\frac{1}{2}\right)^{(N_a(t)+ N_b(t))} \label{den_Zabt_tilde} \\  & \hspace{-2.4cm} \text{ and }  \nonumber \\
	\max_{q' \in [0,1]: q' \leq \frac{1}{2}}\mathcal{L}_{b,a}(t) & = \max_{q' \in [0,1]: q' \leq \frac{1}{2}} q'^{N_b(t)}(1-q')^{N_a(t)} = N_a(t)\log(\hat{q}_{a,b}(t)) + N_b(t)\log(1-\hat{q}_{a,b}(t)), \label{den_Zbat_tilde}
\end{align}
where $\hat{q}_{a,b}(t) = \frac{N_a(t)}{N_a(t)+ N_b(t)}.$ The lemma follows from \eqref{num_Zabt_tilde}, \eqref{den_Zabt_tilde}, \eqref{den_Zbat_tilde} 
and the definition of $\tilde{Z}_{a,b}(t).$ 
\end{proof}
Now we show that our stopping statistic is same as $\tilde{Z}_{\hat{a}_t,\hat{b}_t}(t) )$ in the following lemma.
\begin{lemma}
At any time $t \in \N$,  
\begin{align*}
	 \tilde{Z}_{\hat{a}_t,\hat{b}_t}(t) ) = \tilde{Z}(t). 
\end{align*}
\end{lemma}
\begin{proof}
	In the one versus one version of the \nialgo\ algorithm, we compare two communities at a time. Consider communities ${a,b} \in \K$. If $\tilde{Z}_{a,b}(t) > \beta(t,\delta)$ and 
	$\tilde{Z}_{a,b}(t) > \tilde{Z}_{b,a}(t)$, then we decide community $b$ is not the mode. From \ref{lemma:Z_tilde_abt}, since $\tilde{Z}_{\hat{a}_t,b}(t) > \tilde{Z}_{b,\hat{a}_t}(t)$ for all $b \in \K\setminus\{\hat{a}_t\}$, we only need $(K-1)$ comparisons between community $\hat{a}_t$ and $b \in \K\setminus\{\hat{a}_t\}$.   
	We declare $\hat{a}_t$ as the mode if
	$$\tilde{Z}_{\hat{a}_t,b}(t) > \beta(t,\delta) \text{ for all } b \in \K\setminus\{\hat{a}_t\}.$$ 
	This is equivalent to saying, $\hat{a}_t$ is the mode if 
	$$\min_{ b \in \K\setminus\{\hat{a}_t\}}\tilde{Z}_{\hat{a}_t,b}(t) > \beta(t,\delta).$$ We get 
	\begin{align*}
		&\min_{ b \in \K\setminus\{\hat{a}_t\}}\tilde{Z}_{\hat{a}_t,b}(t) \\
		& = \min_{ b \in \K\setminus\{\hat{a}_t\}}\log(B(N_a(t) + 1,N_b(t)+1)) - (N_a(t) + N_b(t))\log\left(\frac{1}{2}\right) \\
		& = \min_{ b \in \K\setminus\{\hat{a}_t\}}\frac{N_{\hat{a}_t}(t)!N_{b}(t)!}{(N_{\hat{a}_t}(t) + N_{b}(t) + 1)!} - (N_a(t) + N_b(t))\log\left(\frac{1}{2}\right) \\
		&
		= \min_{ b \in \K\setminus\{\hat{a}_t\}} \frac{N_{\hat{a}_t}(t)!}{(N_{b}(t) + N_{\hat{a}_t}(t) + 1)(N_{b}(t) + N_{\hat{a}_t}(t))\ldots(N_{b}(t) + 1)} - (N_a(t) + N_b(t))\log\left(\frac{1}{2}\right) \\
		& = \tilde{Z}_{\hat{a}_t,\hat{b}_t}(t).
	\end{align*} 
    So our stooping statistic $\tilde{Z}(t)$ reduces to $\tilde{Z}_{\hat{a}_t,\hat{b}_t}(t)$. 
\end{proof} 

}

 \section{Mode estimation with identity-based sampling}
\label{sec:identity}

In this section, we turn our attention to the identity-based sampling
model, which is the main focus of this paper. Under this sampling
model, the observation process is Markovian (not i.i.d.), which makes
the design of a sound stopping criterion both methodologically and
computationally challenging. Via a novel relaxation of a GLR statistic
analogous to that used under the the identityless sampling model, we
derive a stopping statistic which provides a trade-off between
sampling efficiency and computational efficiency. The corresponding
family of algorithms is proved to be $\delta$-PC and also
asymptotically optimal as $\delta \da 0.$

We begin by establishing the following information theoretic lower
bound on the expected stopping time of any $\delta$-PC algorithm
under the identity-based sampling model.
%

\begin{lemma}\label{lemma:lb_identitybased}
  Under the identity-based sampling model, given $\delta \in (0,1)$,
  the expected stopping time $\eE[\tau_\delta]$ of any $\delta$-PC
  algorithm satisfies
  \begin{equation*}
    \eE[\tau_\delta] \geq \frac{ \log\left(\frac{1}{2.4\delta}\right) }
       {\log\left(\frac{N-d_2+d_1+1}{N}\right)}.
  \end{equation*} 
\end{lemma}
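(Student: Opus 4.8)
The plan is to establish this lower bound via the standard change-of-measure (likelihood-ratio) technique that underlies information-theoretic lower bounds in the fixed-confidence setting. The core idea is to compare the true instance, in which community~1 is the mode, against a carefully constructed alternative instance in which some other community is the mode, and to argue that any $\delta$-PC algorithm must collect enough samples to statistically distinguish the two. Since the bound in the statement involves $N - d_2 + d_1 + 1$ in the numerator of the logarithm, the natural alternative to consider is the instance obtained by swapping the sizes of communities~1 and~2: keep all $d_j$ for $j \geq 3$ fixed, but set $d_1' = d_2$ and $d_2' = d_1$, so that community~2 becomes the (strict) mode. Under this alternative, a correct algorithm must output~$2$ (not~$1$) with probability at least $1-\delta$, whereas under the true instance it must output~$1$ with probability at least $1-\delta$.

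First I would set up the transportation/data-processing inequality (Lemma~1 of \cite{garivier2016optimal}, often attributed to the Wald-type identity): for any $\delta$-PC algorithm with stopping time $\tau_\delta$, the expected log-likelihood-ratio accumulated between the two instances is lower bounded by $kl(\delta, 1-\delta) \geq \log\left(\frac{1}{2.4\delta}\right)$, where the constant $2.4$ comes from the standard bound $kl(\delta,1-\delta) \geq \log\frac{1}{2.4\delta}$. The left-hand side is $\eE[\text{KL-divergence accumulated up to } \tau_\delta]$, which by Wald's identity equals $\eE[\tau_\delta]$ times the per-step KL-divergence (or, under the Markovian identity-based model, a suitable per-step divergence rate). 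Dividing through then yields a bound of the form $\eE[\tau_\delta] \geq \frac{\log(1/(2.4\delta))}{D}$, where $D$ is the per-step divergence between the observation laws under the two instances.

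The key computation, and the main obstacle, is to identify the per-step divergence $D$ and show it equals (or is upper bounded by) $\log\left(\frac{N - d_2 + d_1 + 1}{N}\right)$. Under identity-based sampling the observation process is Markovian rather than i.i.d., so one cannot simply use a single-sample KL-divergence; instead I would need to bound the one-step conditional divergence of the transition kernel of the underlying absorbing Markov chain (the chain tracking distinct individuals seen per community), and argue this is dominated by the claimed logarithmic quantity uniformly over states. The cleanest route is to observe that swapping $d_1$ and $d_2$ changes only the sampling probabilities of communities~1 and~2, and to choose the alternative so that the divergence is driven by the worst-case (largest) likelihood-ratio increment over a single step; the expression $\frac{N - d_2 + d_1 + 1}{N}$ suggests that the dominating contribution comes from the event of drawing a \emph{new} individual from the enlarged community, where the ratio of the two transition probabilities peaks. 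Care is needed because the divergence must be bounded by a state-independent constant to pull it out of Wald's identity cleanly; I expect the argument to proceed by showing that along any sample path the per-step log-likelihood-ratio increment is at most $\log\frac{N-d_2+d_1+1}{N}$, so the accumulated log-likelihood-ratio up to time $\tau_\delta$ is at most $\tau_\delta \log\frac{N-d_2+d_1+1}{N}$ pathwise, and then taking expectations.

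Finally I would combine the two ingredients: the change-of-measure inequality gives $\eE_{\text{true}}[\log\text{-likelihood-ratio at } \tau_\delta] \geq \log\frac{1}{2.4\delta}$, while the pathwise increment bound gives $\eE_{\text{true}}[\log\text{-likelihood-ratio at } \tau_\delta] \leq \eE[\tau_\delta]\,\log\frac{N-d_2+d_1+1}{N}$, and rearranging delivers the claimed bound. The delicate points to verify are that the chosen alternative instance genuinely forces a different mode output (so the $\delta$-PC property applies to yield the $kl(\delta,1-\delta)$ lower bound), and that the likelihood-ratio increment is correctly bounded in the Markovian setting where the probability of seeing a new versus repeated individual depends on the current set of distinct individuals already observed.
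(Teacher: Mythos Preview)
Your overall strategy (change of measure, data-processing inequality yielding $kl(\delta,1-\delta)\geq\log(1/(2.4\delta))$, pathwise bound on the per-step log-likelihood ratio, then take expectations) is exactly what the paper does. The gap is in your choice of alternative instance.

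You propose swapping $d_1$ and $d_2$, i.e.\ $d_1'=d_2$, $d_2'=d_1$, $N'=N$. With this choice the pathwise bound you want cannot hold. Under the identity-based model, the transition from state $s$ to $s+e_1$ has probability $(d_1-s_1)/N$ under the true instance and $(d_2-s_1)/N$ under your alternative; the one-step log-likelihood increment is $\log\bigl((d_1-s_1)/(d_2-s_1)\bigr)$, which is state-dependent, unbounded, and in fact undefined once $s_1\geq d_2$ (such states are reachable under the true instance but have zero probability under your alternative). So there is no uniform constant you can pull out of the expectation, and the expression $\log\bigl((N-d_2+d_1+1)/N\bigr)$ never appears.

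The alternative that makes the argument go through is the one the paper uses: keep $d_j'=d_j$ for $j\neq 2$ and set $d_2'=d_1+1$, so that $d_j'\geq d_j$ for all $j$ and $N'=N-d_2+d_1+1$. Then every state feasible under the true instance is feasible under the alternative, and for \emph{every} transition (collision or new sample from any community) one checks directly that the log-likelihood increment is at most $\log(N'/N)$: for collisions it equals $\log(N'/N)$ exactly, and for a new sample from community $j$ it is $\log\bigl(N'(d_j-s_j)/(N(d_j'-s_j))\bigr)\leq\log(N'/N)$ because $d_j'\geq d_j$. Summing over $\tau_\delta$ steps and taking expectations then gives $\eE[\tau_\delta]\log(N'/N)$ on the right-hand side, and the claimed bound follows. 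The ``$+1$'' in the numerator is precisely the minimal amount by which $d_2$ must be increased to make community~2 the strict mode; a swap would not produce it.
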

It is instructive to compare the above lower bound to the one for
identityless sampling in Lemma~\ref{lemma:lb_indentity-less}.
\begin{lemma} \label{lemma:comparison_of_LB}
  It holds that
  $$\log\left(\frac{N-d_2+d_1+1}{N}\right)  > \log\left(\frac{N-d_2+d_1}{N}\right) > (p_1 + p_2 )\ kl\left(
  \frac{p_1}{p_1 + p_2 } , \frac{1}{2} \right).$$ Consequently, for
  $\delta$-PC algorithms, the lower bound on the expected stopping
  time under the identityless sampling model (see
  Lemma~\ref{lemma:lb_indentity-less}) is greater than that under the
  identity-based sampling model (see
  Lemma~\ref{lemma:lb_identitybased}). Moreover, the ratio of former
  to the latter can be arbitrarily large, as the following inequality
  shows:
  \begin{align}
    \frac{\log\left(\frac{N-d_2+d_1+1}{N}\right)}{(p_1 + p_2 )\ kl\left(
      \frac{p_1}{p_1 + p_2 } , \frac{1}{2} \right)}   > \frac{\log\left(\frac{N-d_2+d_1}{N}\right)}{(p_1 + p_2 )\ kl\left(
      \frac{p_1}{p_1 + p_2 } , \frac{1}{2} \right)} >
    \frac{(p_1 + p_2)\log(2)}{(p_1-p_2)}
    \label{eq:lbs_ratio}
  \end{align}
\end{lemma}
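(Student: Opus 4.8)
The plan is to prove the three-term inequality chain first, and then read off both the comparison of the two lower bounds and the ``arbitrarily large'' conclusion as consequences. The leftmost inequality $\log\left(\frac{N-d_2+d_1+1}{N}\right) > \log\left(\frac{N-d_2+d_1}{N}\right)$ is immediate from strict monotonicity of $\log$, since $N-d_2+d_1+1 > N-d_2+d_1$. Granting the full chain, the comparison of lower bounds is then automatic: both bounds share the same strictly positive numerator $\log\left(\frac{1}{2.4\delta}\right)$ (for $\delta<1/2.4$), so the bound with the smaller denominator---the identityless one---is the larger. The real content is therefore the middle inequality $\log\left(\frac{N-d_2+d_1}{N}\right) > (p_1+p_2)\,kl\!\left(\frac{p_1}{p_1+p_2},\tfrac12\right)$ together with the rightmost inequality of~\eqref{eq:lbs_ratio}.

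For the middle inequality I would clear denominators by multiplying through by $N$ and substituting $p_i=d_i/N$, reducing it to $g(N) > d_1\log\frac{2d_1}{d_1+d_2} + d_2\log\frac{2d_2}{d_1+d_2}$, where $g(N) := N\log\frac{N+d_1-d_2}{N}$ and (treating $d_1,d_2$ as fixed and $N$ as a free variable over $[d_1+d_2,\infty)$) the right-hand side does not depend on $N$. The key observation is that $g$ is strictly increasing: writing $u=(d_1-d_2)/N$, one finds $g'(N)=\log(1+u)-\frac{u}{1+u}>0$ for $u>0$ by the elementary bound $\log(1+u)>\frac{u}{1+u}$. Since $N=\sum_j d_j\geq d_1+d_2$, it then suffices to verify the inequality at the boundary $N=d_1+d_2$, where $g(d_1+d_2)=(d_1+d_2)\log\frac{2d_1}{d_1+d_2}$ and the claim collapses after cancellation to the hypothesis $d_1>d_2$; monotonicity upgrades this to strict inequality for every admissible $N$.

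For the rightmost inequality, cross-multiplying (all quantities positive) and using $\log\frac{N-d_2+d_1}{N}=\log(1+\Delta)$ reduces the claim to $\Delta\log(1+\Delta) > S^2(\log 2)\,kl\!\left(\frac{p_1}{S},\tfrac12\right)$, where $S:=p_1+p_2$ and $\Delta:=p_1-p_2$. Setting $t:=\Delta/S\in(0,1)$, a short rewrite expresses the right-hand side as $\tfrac{S^2\log 2}{2}f(t)$ with $f(t):=(1+t)\log(1+t)+(1-t)\log(1-t)$, so after dividing by $S^2$ the claim becomes $\frac{t\log(1+St)}{S} > \frac{\log 2}{2}f(t)$. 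Here I would exploit that $S\mapsto \frac{\log(1+St)}{S}$ is decreasing on $(0,1]$ (again via $\log(1+x)>\frac{x}{1+x}$), so the left-hand side, over the admissible range $S\in(0,1]$, is minimized at $S=1$; it thus suffices to prove the single-variable inequality $t\log(1+t) > \frac{\log 2}{2}f(t)$ on $(0,1)$. The ``arbitrarily large'' assertion then follows by fixing $p_1+p_2$ and letting $p_1-p_2\to 0$, whereupon the established lower bound $\frac{(p_1+p_2)\log 2}{p_1-p_2}$ on the ratio diverges.

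I expect the single-variable inequality $\Phi(t):=t\log(1+t)-\frac{\log 2}{2}f(t)>0$ on $(0,1)$ to be the main obstacle, since $\Phi$ is neither monotone nor convex on the whole interval. My plan is a second-derivative sign analysis: one checks $\Phi(0)=\Phi'(0)=0$ and that $\Phi''(t)=\frac{1}{1+t}+\frac{1}{(1+t)^2}-\frac{\log 2}{1-t^2}$ changes sign exactly once, from positive to negative. Multiplying by $1-t^2>0$, this sign condition becomes $h(t) > \log 2$ with $h(t):=\frac{(1-t)(2+t)}{1+t}$, and a single crossing is transparent since $h$ is strictly decreasing from $h(0)=2$ to $h(1)=0$ (indeed $h'(t)=-\frac{t^2+2t+3}{(1+t)^2}<0$). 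Consequently $\Phi'$ rises from $0$, remains positive up to one crossing, then turns negative, so $\Phi$ increases and then decreases on $(0,1)$; since both endpoint values $\Phi(0)=0$ and $\Phi(1)=\log 2\,(1-\log 2)>0$ are nonnegative, this unimodality forces $\Phi(t)>0$ on the open interval, completing the argument.
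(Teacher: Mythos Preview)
Your argument is correct, but the paper's proof is considerably shorter and takes a different route. For the middle inequality, the paper sandwiches both sides around the single quantity $(p_1-p_2)\log 2$: writing $(p_1+p_2)\,kl\!\left(\tfrac{p_1}{p_1+p_2},\tfrac12\right)=(p_1-p_2)\log\tfrac{2p_1}{p_1+p_2}+p_2\log\tfrac{4p_1p_2}{(p_1+p_2)^2}$, the second term is nonpositive by AM--GM and the first is at most $(p_1-p_2)\log 2$; on the other side, concavity of $x\mapsto\log(1+x)$ on $[0,1]$ gives $\log(1+p_1-p_2)\ge (p_1-p_2)\log 2$. For the ratio inequality, the paper uses the elementary bound $\log x<x-1$ to obtain $(p_1+p_2)\,kl(\cdot)<\frac{(p_1-p_2)^2}{p_1+p_2}$ directly, then combines with the same concavity lower bound $\log(1+p_1-p_2)\ge (p_1-p_2)\log 2$; the ratio bound falls out in one line without any calculus. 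Your approach---monotonicity of $N\mapsto N\log\tfrac{N+d_1-d_2}{N}$ for the first part, and a reduction to the one-variable inequality $t\log(1+t)>\tfrac{\log 2}{2}f(t)$ followed by a second-derivative sign analysis for the second---is valid and self-contained, but it works harder than necessary: the paper's pair of elementary inequalities ($\log x<x-1$ and the chord bound for $\log(1+x)$) replaces your entire $\Phi$-analysis. What your approach does buy is that the monotonicity argument makes the role of $N$ transparent (the inequality only gets easier as $N$ grows beyond $d_1+d_2$), whereas the paper's proof hides this.
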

The above lemma highlights that \emph{identity information can
  significantly enhance the sampling efficiency of community mode
  estimation}. From \eqref{eq:lbs_ratio}, it is easy to check that if
$p_2$ is close to $p_1$, i.e., the second largest community is only
slightly smaller than the largest community, the lower bound on the
expected stopping time in the identity-based case is much smaller
(multiplicatively) than the lower bound in the identityless case.  In
other words, for \emph{challenging} instances, identity information
can significantly speed up the task of community mode
estimation. Some intuition for the above comes from
  the analysis of the fixed budget variant of the community mode
  estimation problem (see~\cite{jain2021sequential}). In particular,
  for any fixed~$t,$ it was shown in [18] that
  \begin{align*}
    \mathbb{P}\left(\hat{a}_t \neq 1\right) &\leq c_1\ 
    \mathrm{exp}\left(-t \log\left(\frac{1}{1-(\sqrt{p_1}-\sqrt{p_2})^2}
    \right)\right), \\
    \mathbb{P}\left(\tilde{a}_t \neq 1\right) &\leq c_2\ 
    \mathrm{exp}\left(-t \log\left(\frac{1}{1-(p_1-p_2)}
    \right)\right). \\
  \end{align*}
  Here, $c_1$ and $c_2$ are instance dependent constants. Recall
  that~$\hat{a}_t$ is the empirical mode under the identityless
  setting (the community with the greatest number of samples).
  Finally, $\tilde{a}_t$ denotes the the community with the greatest
  number of \emph{distinct} samples; it may be interpreted as the
  empirical mode under the identity-based setting. (Lower bounds that
  match the above decay rates are also established
  in~\cite{jain2021sequential}.)  It is easy to see that the
  exponential decay rates above
  satisfy $$\log\left(\frac{1}{1-(\sqrt{p_1}-\sqrt{p_2})^2} \right) <
  \log\left(\frac{1}{1-(p_1-p_2)} \right).$$ This means that for
  large~$t,$ it is more likely that the empirical mode disagress with
  the true mode under the identityless setting as compared to the
  identity-based setting. This provides intuition for why identity
  information is useful in the first place. Moreover, under
  challenging instances, denoting $p_1 - p_2 = \epsilon$ for small
  $\epsilon > 0,$ the decay rate in the identity based setting is
  approximately $\log(1+\epsilon) \approx \epsilon$ whereas the decay
  rate in the identityless setting is $o(\epsilon).$ This suggests
  that the time required to make the probability of error $\leq
  \delta$ is approximately~$\frac{\log(1/\delta)}{\epsilon}$ in the
  identity-based setting, and
  approximately~$\frac{\log(1/\delta)}{o(\epsilon)}$ in the
  identityless setting; this in turn suggests that the boost provided
  by identity information is particularly strong under challenging
  instances.

While Lemma~\ref{lemma:comparison_of_LB} compares lower bounds on the
expected sample complexity, note that
Theorems~\ref{thm:optimality_identityless} and
\ref{thm:optimality_identitybased} also demonstrate their asymptotic
achievability as $\delta \to 0$. The proofs of
Lemmas~\ref{lemma:lb_identitybased} and~\ref{lemma:comparison_of_LB}
are given in \ref{appendix:identitybased}. In the remainder of this
section, we develop and analyse a family of algorithms that seek to
match the lower bound in Lemma~\ref{lemma:lb_identitybased} as $\delta
\da 0.$

\subsection{Algorithms}

Recall that under the identity-based sampling model, the observation
recorded at time $t$ is $(x_t, \sigma_t),$ where $x_t$ denotes the
community label of the sampled individual, and $\sigma_t$ is an
indicator of whether this individual is being seen for the first time. Let
$S_j(t)$ denote the number of \textit{distinct} individuals from
community~$j$ sampled till time $t$. Note that the vector $S(t) =
(S_1(t), S_2(t), \ldots, S_K(t))$ is a function of the
observations~$(\bx^t,\bsigma^t)$ recorded until time~$t.$

{\bf Definition of the stopping statistic:} The family of proposed
stopping rules is parameterized by a probability distribution~$\D$
over~$\N^K,$ which is defined as follows. Let $\theta(\cdot)$ denote a
probability mass function supported on $\N,$ i.e., it satisfies
$\theta(i) > 0 \ \forall\ i \in \N$ and $\sum_{i \in \N} \theta(i) =
1.$ Then for $\bd' \in \N^K,$ let 
\begin{equation}
\pP_{\D}\left(\bd'\right):= \prod_{i=1}^K \theta(d'_j)
\label{Eqn:Prior}
\end{equation}
 denote a prior distribution on the
community sizes. Let $\mathcal{K}^{(t)}$ denote the set of communities
that have been observed until time~$t$, i.e., $\mathcal{K}^{(t)} := \{j
\in \K \colon S_j(t) > 0\},$ and let $K^{(t)}:= |\K^{(t)}|.$ Finally, for
$\alpha \in \N,$ let $$\mathcal{S}_{\alpha}(t) := \{\bd' \in \N^K
\colon S_j(t) \leq d'_j \leq \alpha S_j(t)\ \forall\ j\}.$$

Let $\tilde{a}_t:=
\argmax_{j \in \K} S_j(t)$ denote the community that has produced
the largest number of distinct individuals and let $\tilde{b}_t:=
\argmax_{j \in \K\setminus\{\tilde{a}_t\}} S_j(t)$ denote the community with the second largest number of distinct individuals. 
The identity-based stopping statistic is defined once $t> \sum_{j\in
  \K}S_j(t) + K^{(t)}$; this means that the number of `collisions' (i.e.,
instances where the sampled individual had been seen before) exceeds
the number of communities observed. Under this condition, we define the
statistic 
\begin{align}
   Y(t) := 
  &\log\left( \sum_{\boldsymbol{d}' \in \S_{\alpha}(t)} \frac{\prod_{j=1}^{K}\prod_{l=0}^{S_j(t)-1}(d'_j-l)\pP_{\bd'\sim \D}(\boldsymbol{d}')}{(\sum_{i=1}^{K}d'_i)^t} \right)  \nonumber \\
  & \qquad \qquad \qquad \qquad \quad -\sum_{j \in \mathcal{K}^{(t)}}\int_{-1}^{S_j(t)}\log(d^{\tilde{a}_t,\tilde{b}_t}_j(\gamma_0)-v)dv + t\log\left(\sum_{i=1}^{K}d^{\tilde{a}_t,\tilde{b}_t}_i(\gamma_0)\right),
  \label{eq:Yabt}
\end{align}
where
\begin{align*}
   d^{\tilde{a}_t,\tilde{b}_t}_{\tilde{a}_t}(\gamma_0) := d^{\tilde{a}_t,\tilde{b}_t}_{\tilde{b}_t}(\gamma_0)  = \frac{ S_{\tilde{a}_t}(t) + S_{\tilde{b}_t}(t) + 2\gamma_0^2 + r(\gamma_0)}{2(1 - \gamma_0^2)}, \quad
   d^{\tilde{a}_t,\tilde{b}_t}_j(\gamma_0) := \frac{S_j(t) + \gamma_0 }{1-\gamma_0} \text{ for } j \neq \tilde{a}_t,\tilde{b}_t.
\end{align*}
Here, $\gamma_0$ is the unique zero in $(0,1)$ of
\begin{align}\label{eq:g_gamma}
  g(\gamma)  &  := g_1(\gamma) - t,
\end{align}

\begin{align*}
  \text{ where }g_1(\gamma) := \log\left(\frac{1}{\gamma}\right)\Bigg( \frac{ \sum_{j\in \mathcal{K}^{(t)}\setminus\{\tilde{a}_t,\tilde{b}_t\}}S_j(t) + \gamma(K^{(t)}-2) }{1-\gamma} 
  + \frac{ S_{\tilde{a}_t}(t) + S_{\tilde{b}_t}(t) + 2\gamma^2 + r(\gamma)}{(1 - \gamma^2)} \Bigg)
\end{align*}
and $r(\gamma) := \sqrt{ (S_{\tilde{a}_t}(t) - S_{\tilde{b}_t}(t))^2 + 4\gamma^2(1+S_{\tilde{a}_t}(t)
  +S_{\tilde{b}_t}(t)+ S_{\tilde{a}_t}(t)S_{\tilde{b}_t}(t))}.$\\

\ignore{The value of $\gamma_0$ is between $0$ and $1$. Note that,
  from lemma \ref{lemma_g_gamma}, $g_1(\gamma)$ if a decreasing
  function of $\gamma$ when $\gamma \in (0, 1)$. It follows from
  \eqref{eq:g_1_gamma_as_gamma_to_1} in Appendix
  \ref{appendix:identitybased} that for $t = \sum_{j\in \K}S_j(t) +
  K^{(t)}$, $\gamma_0 \to 1$ and as $t$ goes to infinity, $\gamma_0 \to
  0$.}

{\bf Stopping rule:} The family of algorithms we propose is
parameterized by the choice of $\alpha$ and the distribution~$\theta$
on~$\N.$ An instance of this family, referred to as the
\emph{Identity-based Community Mode Estimation} algorithm, or
\ibalgo$(\alpha,\theta),$ operates as follows.

\begin{itemize}
	\item At time~$t,$ if the stopping condition of \nialgo\ holds for
	$\beta(t,\delta/2)$, then \ibalgo$(\alpha,\theta)$ also stops
	sampling and reports the output of \nialgo\ as the mode
	estimate.\footnote{The stopping condition of \nialgo\ can be
		replaced by that of \nialgo-1v1, or indeed, by that of any
		algorithm that is $\delta$-PC in the identityless model. This
		choice does not influence the performance guarantees of
		\ibalgo$(\alpha,\theta)$.}
	\item If $t > \sum_{j\in \K}S_j(t) + K^{(t)},$ compute
	$Y(t)$.
	\ibalgo$(\alpha,\theta)$ stops sampling if
	\begin{equation}\label{eq:Yabt_stopping_rule}
		Y(t)  > 
		\beta(t,\delta/2) = \log\left( \frac{2(K-1)}{\delta} \right),
	\end{equation} %
	reporting $\tilde{a}_t$ as the estimated mode.
\end{itemize}
This procedure is formalized as Algorithm~\ref{alg:identitybased}.

\begin{algorithm}[h]
			\caption{Community mode estimation with identity-based sampling (\ibalgo)}
			\label{alg:identitybased}
			\begin{algorithmic}
					\STATE $t \gets 1$
					\LOOP
					\STATE Obtain $x_t$, $\sigma_t$
					\STATE $\hat{a}_t \gets \arg\max_{i \in \mathcal{K}} \hat{p}_i(t) $, $\hat{b}_t \gets \arg\max_{i \in \mathcal{K} \setminus \{\hat{a}_t\}} \hat{p}_i(t) $ 
				    \STATE $ Z(t) \gets \log\frac{B(N_1(t)+ 1, N_2(t)+1, \ldots , N_K(t)+1)}{B(1,1,\ldots,1)} $
				    \STATE $ \hspace{1.1cm} - \sum_{i \in \K\setminus\{\hat{a}_t,\hat{b}_t\}}N_i(t)\log(\hat{p}_i(t)) $
				    \STATE $ \hspace{1.3cm} -(N_{\hat{a}_t}(t) + N_{\hat{b}_t}(t))\log\left(\frac{\hat{p}_{\hat{a}_t}(t) + \hat{p}_{\hat{b}_t}(t)}{2}\right)$
					\STATE $\beta(t, \delta/2) \gets \log\left( \frac{2(K-1)}{\delta} \right)$
					\IF {$Z(t) > \beta(t,\delta/2)$} \STATE {stopping time $\tau_\delta = t$ and mode $\hat{a}_{\tau_\delta} = \hat{a}_t $}
					\STATE Exit loop
					\ELSE\IF {$t> \sum_{j\in \K}S_j(t) + K^{(t)}$ }
					\STATE  $\tilde{a}_t \gets \argmax_{j \in \K} S_j(t)$
					\STATE $\tilde{b}_t \gets \arg\max_{j \in \mathcal{K} \setminus \{\tilde{a}_t\}} S_j(t)$
					\STATE $ Y(t) \gets \log\left( \sum_{\boldsymbol{d}' \in \S_{\alpha}(t)} \frac{\prod_{j=1}^{K}\prod_{l=0}^{S_j(t)-1}(d'_j-l)\pP_{\bd'\sim \D}(\boldsymbol{d}')}{(\sum_{i=1}^{K}d'_i)^t} \right) $
					\STATE  $\qquad \qquad \qquad -\sum_{j \in \mathcal{K}^{(t)}}\int_{-1}^{S_j(t)}\log(d^{\tilde{a}_t,\tilde{b}_t}_j(\gamma_0)-v)dv + t\log\left(\sum_{i=1}^{K}d^{\tilde{a}_t,\tilde{b}_t}_i(\gamma_0)\right)$
					
					\IF {$Y(t) > \beta(t,\delta/2)$}
					\STATE {stopping time $\tau_\delta = t$ and mode $\hat{a}_{\tau_\delta} = \tilde{a}_t $}
					\STATE Exit loop
					\ELSE \STATE{$t \gets t+1 $}
					\ENDIF
					\ELSE \STATE{$t \gets t+1 $}
					\ENDIF
					\ENDIF
					\ENDLOOP
				\end{algorithmic}
\end{algorithm}
%
\subsection{Stopping rule interpretation}

We now provide an interpretation for the stopping
condition~\eqref{eq:Yabt_stopping_rule}. We begin by characterizing
the likelihood function under the identity-based sampling model. At
time $t+1$, given the observations recorded till time $t$, the
probability of picking a new sample from community $j\in \mathcal{K}$
is
\begin{align*}
  \pP(x_{t+1} =j, \sigma_{t+1}=1 | \bx^t , \bsigma^t) = \frac{d_j - S_j(t)}{\sum_{i=1}^K d_i} \quad\text{and}\quad \pP(x_{t+1} =j, \sigma_{t+1}=0 | \bx^t , \bsigma^t) = \frac{S_j(t)}{\sum_{i=1}^K d_i}
\end{align*}  
is the probability of picking a sample already seen before. Thus, the
likelihood of the observations till time $t$ is given by
\begin{align}
   \mathcal{L}_{\boldsymbol{d}}(\bx^t, \bsigma^t)  
  & =\underbrace{\prod_{j=1}^{K}\prod_{l=0}^{S_j(t)-1}\frac{d_j-l}{(\sum_{i=1}^{K}d_i)}}_{L_{new}} \underbrace{\prod_{\tau=1}^{t}\left(\frac{S_{x_\tau}(\tau-1)}{\sum_{i=1}^{K}d_i}\right)^{(1-\sigma_\tau)}}_{L_{rep}} \nonumber \\
  &  =\frac{\prod_{j=1}^{K}\prod_{l=0}^{S_j(t)-1}(d_j-l) \prod_{\tau=1}^{t}\left(S_{x_\tau}(\tau-1)\right)^{(1-\sigma_\tau)}}{(\sum_{i=1}^{K}d_i)^t}. \label{likelihood_with_icase}
\end{align}
where $\boldsymbol{d} = (d_1, d_2, \ldots d_K)$. The factor $L_{new}$
above arises from the new samples seen until time~$t,$ whereas the
factor $L_{rep}$ arises due to the repeated samples seen till
time~$t.$

The statistic~$Y_{a,b}(t)$ used in the proposed family of algorithms
is a relaxation of the following statistic:
\begin{align}
  \displaystyle
  & W_{a,b}(t) = \log\frac{\mathbb{E}_{\bd'\sim \D}\left[\mathcal{L}_{\boldsymbol{d}'}(\bx^t, \bsigma^t) \right]}
      {\sup_{\boldsymbol{d}'\in \N^K \colon d'_a \leq d'_b}\mathcal{L}_{\boldsymbol{d}'}(\bx^t, \bsigma^t)} .
      \label{def_Wabt} 
  %
\end{align}
Note that the numerator in \eqref{def_Wabt} (inside the logarithm) is
the average likelihood of the observation sequence with respect to the
prior distribution~$\D$ over problem instances~$\bd'$. On the other
hand, the denominator is the supremum of the likelihood over problem
instances~$\bd'$ that support the alternative hypothesis $d'_a \leq
d'_b.$ (Indeed, $W_{a,b}(t)$ is the natural generalization of
$Z_{a,b}(t),$ defined in Section~\ref{sec:noidentity}, to the
identity-based sampling model.) It can be proved that using the
statistic~$W_{a,b}(t)$ in place of $Y_{a,b}(t)$ in
Algorithm~\ref{alg:identitybased} results in a family of $\delta$-PC,
asymptotically optimal algorithms (see the proofs of
Theorems~\ref{thm:correctness_identitybased},
~\ref{thm:optimality_identitybased}).

However, using~$W_{a,b}(t)$ directly in algorithms is challenging from
a computational standpoint. In particular, computing the expectation
in the numerator in~\eqref{def_Wabt} involves an infinite sum
(over~$\N^K$). More crucially, computing the denominator
in~\eqref{def_Wabt} involves a discrete optimization over an infinite
subset of~$\N^K.$ To resolve these issues, we design a computationally
tractable lower bound $Y_{a,b}(t)$ of the statistic~$W_{a,b}(t),$
which preserves both soundness and asymptotic optimality of stopping
rule. We describe this design next, in three steps.

\noindent {\bf Step~1:} We first note that~$W_{a,b}(t)$ can be
expressed as follows.
\begin{align*}
   W_{a,b}(t)  & = \log\left(\sum_{\bd' \in \N^K} \frac{\prod_{j=1}^{K}\prod_{l=0}^{S_j(t)-1}(d'_j-l)\pP_{\boldsymbol{d}'\sim \D}(\boldsymbol{d}')}{(\sum_{i=1}^{K}d'_i)^t}\right) \nonumber \\
  &   \qquad \qquad    - \sup_{\bd' \in \N^K \colon d'_a \leq d'_b} \left\{ \sum_{j=1}^{K} \sum_{l=0}^{S_j(t)-1}\log(d'_j-l) -t\log\left(\sum_{i=1}^{K}d'_i\right)\right\} & =: T_1 - T_2.
\end{align*}
In the above representation, the factors in the likelihood that do not
depend on~$\bd'$ have been cancelled across the numerator and
denominator of~\eqref{def_Wabt}.

\noindent {\bf Step~2:} The term~$T_1$ can be lower bounded by
restricting the summation to a finite set that includes~$\bd' = S(t)$
as follows:
\begin{equation}\label{eq:Wab_T1}
  T_1 \geq \log\left(\sum_{\bd' \in \S_{\alpha}(t)} \frac{\prod_{j=1}^{K}\prod_{l=0}^{S_j(t)-1}(d'_j-l)
    \pP_{\boldsymbol{d}'\sim \D}(\boldsymbol{d}')}{(\sum_{i=1}^{K}d'_i)^t}\right)
\end{equation}
Note that the parameter~$\alpha \in \N$ controls the size of the set
over which the above summation is performed. On one extreme,
setting~$\alpha=1$ restricts the summation to a single term
corresponding to $\bd' = S(t).$ On the other extreme, as $\alpha \ra
\infty,$ we recover the term~$T_1$ exactly. Intermediate values
of~$\alpha$ provide a trade-off between computational complexity (per
sampling epoch) and sample complexity.

\noindent {\bf Step~3:} $T_2$ is upper bounded by~(i) replacing
$\sum_{l=0}^{S_j(t)-1}\log(d'_j-l)$ by the integral
relaxation~$\int_{-1}^{S_j(t)}\log(d'_j-v)dv,$ and (ii) performing the
supremum over~$\R_+^K$ rather than~$\N^K.$ Formally,
\begin{align}
  T_2 \leq 
  \sup_{\boldsymbol{d}'\in \R_+^K \colon d'_a \leq d'_b}
  \underbrace{\left\{\sum_{j \in \mathcal{K}^{(t)}}\int_{-1}^{S_j(t)}
    \log(d'_j-v)dv -t\log\left(\sum_{i=1}^{K}d'_i\right)\right\}}_{f_t(\bd')}.
  \label{eq:Wab_T2}
\end{align}
Remarkably, the above \text{continuous} optimization of the function
$f_t(\bd')$ can be reduced to a one-dimensional optimization. 
\ignore{
Instead of the average that requires a summation over infinite values
$d'_j$ for all $j \in \mathcal{K}$, if we limit the summation over
$\boldsymbol{d}' \in \mathcal{D}_\alpha$ we get
\begin{align}
  & W_{a,b}(t) \nonumber \\
  & \geq  \log\frac{\sum_{\boldsymbol{d}' \in \mathcal{D}_\alpha}\mathcal{L}_{\boldsymbol{d}'}(\bx^t, \bsigma^t)\pP_{\boldsymbol{d}'}(\boldsymbol{d}')}{\sup_{d'_a \leq d'_b, \boldsymbol{d}'\in \mathcal{D} }\mathcal{L}_{\boldsymbol{d}'}(\bx^t, \bsigma^t)}\\
  & = \log\left(\sum_{\boldsymbol{d}' \in \mathcal{D}_\alpha}\frac{\prod_{j=1}^{K}\prod_{l=0}^{S_j(t)-1}(d'_j-l)\pP_{\boldsymbol{d}'}(\boldsymbol{d}')}{(\sum_{i=1}^{K}d'_i)^t}\right) \nonumber \\
  &   - \sup_{d'_a \leq d'_b, \boldsymbol{d}'\in \mathcal{D}} \left\{ \sum_{j=1}^{K}\underbrace{\sum_{l=0}^{S_j(t)-1}\log(d'_j-l)}_{s*} -t\log\left(\sum_{i=1}^{K}d'_i\right)\right\}
\end{align}
The summation $s*$ in the above equation does not have a closed form expression. It is upper-bounded by the integral $\int_{-1}^{S_j(t)}\log(d'_j-v)dv$. So, replacing $s*$ by the integral upper bound for all $j \in \mathcal{K}^{(t)}$ gives 
\begin{align} \label{eq:Wabt_lb}
  & W_{a,b}(t) \nonumber \\
  & \geq \log\left(\sum_{\boldsymbol{d}' \in \mathcal{D}_\alpha}\frac{\prod_{j=1}^{K}\prod_{l=0}^{S_j(t)-1}(d'_j-l)\pP_{\boldsymbol{d}'}(\boldsymbol{d}')}{(\sum_{i=1}^{K}d'_i)^t}\right) \nonumber \\
  &  - \sup_{d'_a \leq d'_b, \boldsymbol{d}'\in \mathcal{D}} \underbrace{\left\{\sum_{j \in \mathcal{K}^{(t)}}\int_{-1}^{S_j(t)}\log(d'_j-v)dv -t\log\left(\sum_{i=1}^{K}d'_i\right)\right\}}_{f_{\boldsymbol{d'}}(t)} 
\end{align}
} 
%
%
\begin{lemma}\label{lemma:sup_pf_ST_Yabt_wi}
  When communities $a, b \in \K$ satisfy $S_a(t) \geq S_b(t)$, for $t> \sum_{j\in \K}S_j(t) + K^{(t)}$, the function $f_t(\bd')$ attains
  its supremum over $\{\bd' \in \R_+^K \colon d'_a \leq d'_b\}$ at
  \begin{align*}
    d^{a,b}_a(\gamma_0) = d^{a,b}_b(\gamma_0) = \frac{ S_a(t) + S_b(t)
      + 2\gamma_0^2 + r(\gamma_0)}{2(1 - \gamma_0^2)}, \quad
    d^{a,b}_j(\gamma_0) = \frac{S_j(t) + \gamma_0 }{1-\gamma_0} \ \forall j \neq a,b,  
  \end{align*}
  where $\gamma_0$ is the unique zero in $(0,1)$ of \textcolor{blue}{$g(\gamma) = \log\left(\frac{1}{\gamma}\right)\Bigg( \frac{ \sum_{j\in \mathcal{K}^{(t)}\setminus\{a,b\}}S_j(t) + \gamma(K^{(t)}-2) }{1-\gamma} 
  + \frac{ S_{a}(t) + S_{b}(t) + 2\gamma^2 + r(\gamma)}{(1 - \gamma^2)} \Bigg) -t$.}
\end{lemma}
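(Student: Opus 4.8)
The plan is to treat this as a constrained continuous optimization, identify the maximizer via first-order (KKT) conditions, and then collapse the problem to a one-dimensional search over a scalar $\gamma$. First I would simplify the objective: substituting $u = d'_j - v$ gives, for each observed community $j \in \mathcal{K}^{(t)}$, the closed form $h_j(d'_j) := \int_{-1}^{S_j(t)}\log(d'_j - v)\,dv = (d'_j+1)\log(d'_j+1) - (d'_j - S_j(t))\log(d'_j - S_j(t)) - (S_j(t)+1)$, which is finite on $\{d'_j \ge S_j(t)\}$ and satisfies $h_j'(d'_j) = \log\frac{d'_j+1}{d'_j - S_j(t)}$ and $h_j''(d'_j) = \frac{1}{d'_j+1} - \frac{1}{d'_j - S_j(t)} < 0$, so each $h_j$ is strictly concave. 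Thus $f_t(\bd') = \sum_{j \in \mathcal{K}^{(t)}} h_j(d'_j) - t\log(\sum_i d'_i)$. For an unobserved community ($S_j(t)=0$, $j \notin \mathcal{K}^{(t)}$) one has $\partial f_t/\partial d'_j = -t/\sum_i d'_i < 0$, so the supremum forces $d'_j \to 0$ and only observed coordinates are active.

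The key device is to decouple scale from allocation. Writing $s := \sum_i d'_i$ and fixing $s$, I would maximize $\sum_j h_j(d'_j)$ subject to $\sum_j d'_j = s$, $d'_a \le d'_b$, and $d'_j \ge S_j(t)$; this inner problem has a strictly concave objective over a convex set, hence a unique KKT-characterized maximizer. With $\lambda$ the multiplier of the sum constraint, $h_j'(d'_j) = \lambda$ for $j \ne a,b$ rearranges, on setting $\gamma := e^{-\lambda} \in (0,1)$, to $d'_j = \frac{S_j(t) + \gamma}{1-\gamma}$, and one checks $d'_j > S_j(t)$, so the lower bounds are slack. For the pair $(a,b)$ I would argue the ordering constraint is active: the unconstrained stationary values would be $\frac{S_a(t)+\gamma}{1-\gamma} \ge \frac{S_b(t)+\gamma}{1-\gamma}$ since $S_a(t) \ge S_b(t)$, violating $d'_a \le d'_b$ unless equal, so $d'_a = d'_b =: \mu$. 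The stationarity conditions for $a,b$ then combine to $h_a'(\mu) + h_b'(\mu) = 2\lambda$, i.e. $\frac{(\mu+1)^2}{(\mu - S_a(t))(\mu - S_b(t))} = \gamma^{-2}$; solving this quadratic and taking the larger root (the one exceeding $S_a(t)$) yields $\mu = \frac{S_a(t)+S_b(t)+2\gamma^2 + r(\gamma)}{2(1-\gamma^2)}$, after the discriminant simplifies to $r(\gamma)^2$ using $(S_a - S_b)^2 + 4 S_a S_b = (S_a+S_b)^2$.

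It then remains to optimize over $s$ (equivalently $\gamma$). By the envelope theorem the inner value function $\phi(s)$ satisfies $\phi'(s)=\lambda$, so the outer condition $\phi'(s)=t/s$ reads $\log(1/\gamma)\cdot s = t$; substituting the optimal allocation for $\sum_i d'_i$ gives exactly $g_1(\gamma)=t$, i.e. $g(\gamma)=0$, defining $\gamma_0$. For existence and uniqueness of a root in $(0,1)$ I would analyze the limits: as $\gamma \downarrow 0$ the allocation stays bounded while $\log(1/\gamma)\to\infty$, so $g_1(\gamma)\to+\infty$; and a short computation gives $r(1)=S_a(t)+S_b(t)+2$ and $\lim_{\gamma\uparrow 1}(1-\gamma)\sum_i d'_i = \sum_j S_j(t)+K^{(t)}$, whence $\lim_{\gamma\uparrow 1} g_1(\gamma) = \sum_j S_j(t)+K^{(t)}$, so the hypothesis $t > \sum_j S_j(t)+K^{(t)}$ is precisely what makes $g(1^-)<0$ and forces a crossing.

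The main obstacle is twofold. First, $f_t$ is not jointly concave (its Hessian is a negative-definite diagonal plus a rank-one positive semidefinite term), so global optimality cannot be read off directly; the fix-$s$ decomposition is exactly what restores concavity of the inner problem and legitimizes the KKT characterization, while coercivity of $f_t$ at infinity (guaranteed since $t > S_j(t)+1$ for every $j$, as $\sum_j S_j(t) + K^{(t)} \ge S_j(t)+1$) ensures the supremum is attained at the interior stationary point rather than on a boundary or at infinity. Second, upgrading existence of $\gamma_0$ to uniqueness requires strict monotonicity of $g_1$ on $(0,1)$; this is the delicate calculation (handled via the auxiliary monotonicity lemma for $g_1$), and it is what converts the intermediate-value argument above into the claimed unique zero.
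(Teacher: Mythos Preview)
Your proposal is correct and arrives at the same stationary equations, the same $\gamma$-parametrization, and the same limiting analysis at $\gamma\uparrow 1$ as the paper. The organizational route, however, differs in one meaningful way. The paper applies KKT directly to $f_t$ under the single constraint $d'_a\le d'_b$, obtains the candidate maximizer, and then---because $f_t$ is not jointly concave---verifies local maximality via a bordered Hessian computation (checking that the last $K-1$ leading principal minors alternate in sign), before invoking monotonicity of $g_1$ to conclude uniqueness and hence global optimality. Your fix-$s$ decomposition sidesteps that Hessian calculation entirely: because $\sum_j h_j(d'_j)$ is strictly concave once the sum $\sum_j d'_j$ is held fixed, the inner KKT point is automatically the global inner maximizer, and global optimality of the overall problem reduces to the one-dimensional outer problem in $s$ (equivalently $\gamma$), handled by coercivity plus the same monotonicity lemma for $g_1$. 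So your approach buys a cleaner second-order verification; the paper's approach is more direct but pays with the bordered-Hessian bookkeeping. Both routes ultimately rest on the same delicate technical ingredient, the strict monotonicity of $g_1$ on $(0,1)$, which you correctly flag but do not prove.
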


Lemma~\ref{lemma:sup_pf_ST_Yabt_wi} enables a tractable computation of
the upper bound in~\eqref{eq:Wab_T2}. It thus follows
from~\eqref{eq:Wab_T1},~\eqref{eq:Wab_T2}, and
Lemma~\ref{lemma:sup_pf_ST_Yabt_wi} that
\begin{align}\label{Yabt_lower_bound}
	W_{a,b}(t) \geq Y_{a,b}(t) : =   &\log\left( \sum_{\boldsymbol{d}' \in \S_{\alpha}(t)} \frac{\prod_{j=1}^{K}\prod_{l=0}^{S_j(t)-1}(d'_j-l)\pP_{\bd'\sim \D}(\boldsymbol{d}')}{(\sum_{i=1}^{K}d'_i)^t} \right)  \nonumber \\
	& \qquad \qquad \qquad \qquad \quad -\sum_{j \in \mathcal{K}^t}\int_{-1}^{S_j(t)}\log(d^{a,b}_j(\gamma_0)-v)dv + t\log\left(\sum_{i=1}^{K}d^{a,b}_i(\gamma_0)\right)
\end{align}
when $S_a(t) \geq S_b(t)$ for $t> \sum_{j\in \K}S_j(t) + K^{(t)}.$
Crucially, unlike the statistic~$W_{a,b}(t),$ its
relaxation~$Y_{a,b}(t)$ is computationally tractable. Also, note that
when $S_a(t) \geq S_b(t)$ for $t> \sum_{j\in \K}S_j(t) + K^{(t)},$ from
\eqref{Ybat},
\begin{align*}
  Y_{b,a}(t)  =   &\log\left( \sum_{\boldsymbol{d}' \in \S_{\alpha}(t)} \frac{\prod_{j=1}^{K}\prod_{l=0}^{S_j(t)-1}(d'_j-l)\pP_{\bd'\sim \D}(\boldsymbol{d}')}{(\sum_{i=1}^{K}d'_i)^t} \right)-\sup_{\boldsymbol{d}'\in \R_+^K}f_t(\bd') \leq Y_{a,b}(t). 
\end{align*}
The following lemma establishes the connection between $Y(t)$ and
$Y_{a,b}(t)$.
\begin{lemma}
  \label{Lemma:Y}
  At any time $t > \sum_{j\in \K}S_j(t) + K^{(t)},$
  \begin{align*}
    Y(t)  = \max_{a \in \K}\min_{b\in \K \setminus \{a\} } Y_{a,b}(t) = Y_{\tilde{a}_t, \tilde{b}_t}(t).
  \end{align*}
\end{lemma}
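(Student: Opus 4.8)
The plan is to follow the same minimax template that proves Lemma~\ref{Lemma:Z} in the identityless setting, with the constrained likelihood supremum replaced by the constrained supremum of $f_t$. First I would record the representation underlying the construction in Steps~1--3: for every ordered pair $(a,b)$ with $a\neq b$,
\[
Y_{a,b}(t) = C(t) - G_{a,b}(t),
\]
where $C(t)$ is the log-sum term over $\S_{\alpha}(t)$ appearing in~\eqref{eq:Wab_T1} (which does \emph{not} depend on $(a,b)$) and $G_{a,b}(t) := \sup_{\bd'\in\R_+^K:\,d'_a\le d'_b} f_t(\bd')$ with $f_t$ as in~\eqref{eq:Wab_T2}. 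When $S_a(t)\ge S_b(t)$, Lemma~\ref{lemma:sup_pf_ST_Yabt_wi} evaluates $G_{a,b}(t)$ in closed form via $\gamma_0$, giving exactly~\eqref{Yabt_lower_bound}; when $S_a(t)\le S_b(t)$ the constraint is inactive and $G_{a,b}(t)=\sup_{\bd'\in\R_+^K} f_t(\bd')=:G^*(t)$. Since $Y(t)=Y_{\tilde a_t,\tilde b_t}(t)$ holds by definition (compare~\eqref{eq:Yabt} with~\eqref{Yabt_lower_bound}), it remains to show
\[
\max_{a\in\K}\min_{b\in\K\setminus\{a\}} Y_{a,b}(t) = Y_{\tilde a_t,\tilde b_t}(t),
\qquad\text{equivalently}\qquad
\min_{a\in\K}\; \underbrace{\max_{b\in\K\setminus\{a\}} G_{a,b}(t)}_{=:h(a)} = G_{\tilde a_t,\tilde b_t}(t).
\]
Rewriting $h(a)=\sup\{f_t(\bd'):\, d'_a\le \max_{b\neq a} d'_b\}$ exhibits $h(a)$ as the supremum of $f_t$ over the region where coordinate $a$ is \emph{not} the strict maximum of $\bd'$.

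The key tool I would isolate is a rearrangement (supermodularity) property of $f_t$. Writing $\phi(S,d):=\int_{-1}^{S}\log(d-v)\,dv$ for $d>S\ge 0$, a direct computation gives $\partial^2\phi/\partial S\,\partial d = 1/(d-S)>0$, so $\phi$ has increasing differences. Consequently, for any feasible $\bd'$ (necessarily with $d'_j>S_j(t)$ for every observed $j$, else $f_t=-\infty$) and any two coordinates $i,j$ with $S_i(t)\ge S_j(t)$ but $d'_i<d'_j$, swapping the values $d'_i\leftrightarrow d'_j$ leaves $\sum_i d'_i$ (hence the $-t\log(\sum_i d'_i)$ term) unchanged while weakly increasing the remaining sum, and it preserves feasibility because the larger value is moved onto the coordinate with the larger $S$. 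Thus any maximizer of $f_t$ (over $\R_+^K$ or over any of the half-spaces above) may be taken to respect the order of $\{S_j(t)\}$, and concordant reassignments never decrease $f_t$.

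With this swap in hand I would conclude in two steps. \emph{(i) The minimizing $a$ is $\tilde a_t$.} For $a\neq\tilde a_t$ we have $S_a(t)\le S_{\tilde a_t}(t)$; starting from a maximizer of $G^*(t)$ and, if needed, swapping coordinates $a$ and $\tilde a_t$, I obtain a global maximizer $\bd^{*}$ with $d^{*}_a\le d^{*}_{\tilde a_t}\le \max_{b\neq a} d^{*}_b$ (using $\tilde a_t\neq a$). Hence $\bd^{*}$ is feasible for $h(a)$, so $h(a)=G^*(t)$. Since always $h(\tilde a_t)\le G^*(t)$, the minimum of $h$ is attained at $a=\tilde a_t$. \emph{(ii) For $a=\tilde a_t$ the binding alternative is $b=\tilde b_t$.} The inclusion $\{d'_{\tilde a_t}\le d'_{\tilde b_t}\}\subseteq\{d'_{\tilde a_t}\le\max_{b\neq\tilde a_t} d'_b\}$ gives $h(\tilde a_t)\ge G_{\tilde a_t,\tilde b_t}(t)$. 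For the reverse, take any feasible $\bd'$ with $d'_{\tilde a_t}\le\max_{b\neq\tilde a_t}d'_b=d'_{b^\star}$; we may assume $b^\star$ is an observed community, since inflating an unobserved coordinate only worsens $f_t$. If $b^\star\neq\tilde b_t$, then $S_{\tilde b_t}(t)\ge S_{b^\star}(t)$ while $d'_{b^\star}\ge d'_{\tilde b_t}$, so swapping $d'_{b^\star}\leftrightarrow d'_{\tilde b_t}$ yields $\bd''$ with $f_t(\bd'')\ge f_t(\bd')$ and $d''_{\tilde a_t}=d'_{\tilde a_t}\le d'_{b^\star}=d''_{\tilde b_t}$, i.e. feasible for $G_{\tilde a_t,\tilde b_t}(t)$; the swap keeps $f_t$ finite because $d'_{\tilde b_t}>S_{\tilde b_t}(t)\ge S_{b^\star}(t)$. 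Taking suprema gives $h(\tilde a_t)\le G_{\tilde a_t,\tilde b_t}(t)$, hence equality. Combining (i) and (ii) yields $\min_a h(a)=G_{\tilde a_t,\tilde b_t}(t)$, which is exactly the claim.

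I expect the main obstacle to be the rearrangement step together with feasibility bookkeeping: each swap must be shown to keep every coordinate strictly above its $S$-value (so that $f_t$ remains finite), the binding coordinate $b^\star$ must be argued to be observed, and attainment of the relevant suprema must be invoked to pass from near-maximizers to maximizers. Attainment is precisely what the standing condition $t>\sum_{j}S_j(t)+K^{(t)}$ buys us, since it is what makes $\gamma_0\in(0,1)$ well defined in Lemma~\ref{lemma:sup_pf_ST_Yabt_wi}. The supermodularity identity $\partial^2\phi/\partial S\,\partial d=1/(d-S)$ is the crux that makes both the ordering of maximizers and the identification $b=\tilde b_t$ go through cleanly, replacing the explicit $\gamma_0$-level comparisons one would otherwise be forced to carry out.
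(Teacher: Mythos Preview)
Your proof is correct and is in fact more complete than the paper's. The paper's own proof of Lemma~\ref{Lemma:Y} simply records that $Y_{a,b}(t)>Y_{b,a}(t)$ whenever $S_a(t)\ge S_b(t)$, observes that $Y_{b,a}(t)$ then equals the common ``unconstrained'' value $C(t)-\sup_{\bd'}f_t(\bd')$, and says the rest ``follows arguments similar to those in Lemma~\ref{Lemma:Z}.'' In particular, the paper never spells out why $\min_{b\neq\tilde a_t}Y_{\tilde a_t,b}(t)$ is attained at $b=\tilde b_t$; in the identityless proof this step is likewise asserted rather than argued.

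Your route is genuinely different in how it handles that step. By writing $Y_{a,b}(t)=C(t)-G_{a,b}(t)$ and recasting $h(a)=\max_{b\neq a}G_{a,b}(t)$ as the supremum of $f_t$ over the half-space ``coordinate $a$ is not the strict maximum,'' you reduce the identification of both the outer maximizer $a=\tilde a_t$ and the inner minimizer $b=\tilde b_t$ to a single rearrangement principle: the supermodularity $\partial^2\phi/\partial S\,\partial d=1/(d-S)>0$ forces any optimizer of $f_t$ to be concordant with the order of $\{S_j(t)\}$. This makes step~(ii) rigorous without touching the explicit $\gamma_0$-formulae, and the feasibility bookkeeping you flag (each swap keeps $d'_j>S_j(t)$; an unobserved $b^\star$ can be swapped onto $\tilde b_t$ since inflating the observed coordinate only increases the $\phi$-sum) is sound. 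The paper's approach is shorter because it leans on the explicit unconstrained optimizer from the KKT analysis; your approach is slightly longer but yields the missing monotonicity in $b$ for free and would port unchanged to any separable $f_t$ with increasing differences.
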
 
As we show next, the proposed algorithms based on this
relaxation are $\delta$-PC and asympotically optimal as $\delta \da
0.$

\subsection{Correctness and asymptotic optimality}

We now establish that the proposed family of algorithms enjoys
guarantees analogous to those established before under identityless
sampling.
\begin{theorem}\label{thm:correctness_identitybased}
  Under the identity-based sampling model, given $\delta \in (0,1),$
  \ibalgo$(\alpha,\theta)$ is $\delta$-PC.
\end{theorem}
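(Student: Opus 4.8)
The plan is to split the error event of \ibalgo$(\alpha,\theta)$ according to which of its two stopping conditions triggers termination, bound each contribution by $\delta/2$, and conclude by a union bound. Writing the error event as $\{\tau_\delta < \infty,\ \hat{a}_{\tau_\delta}\neq 1\}$, note that the algorithm stops either because the \nialgo\ condition $Z(t)>\beta(t,\delta/2)$ holds (reporting $\hat a_t$) or because $Y(t)>\beta(t,\delta/2)$ holds (reporting $\tilde a_t$). Hence this event is contained in $\mathcal E_Z \cup \mathcal E_Y$, where $\mathcal E_Z := \{\exists t:\ Z(t)>\beta(t,\delta/2),\ \hat a_t\neq 1\}$ and $\mathcal E_Y := \{\exists t:\ Y(t)>\beta(t,\delta/2),\ \tilde a_t \neq 1\}$.

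For $\mathcal E_Z$, the key observation is that the marginal process of community labels $(x_t)$ is i.i.d. with law $\boldsymbol p$, exactly as under identityless sampling, and $Z(t)$ is a function of the counts $(N_i(t))$ alone. Thus $\mathcal E_Z$ is precisely the error event of \nialgo\ run at confidence level $\delta/2$, and Theorem~\ref{thm:correctness_identityless} (applied with $\delta$ replaced by $\delta/2$) gives $\pP(\mathcal E_Z)\le \delta/2$.

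The substance lies in bounding $\mathcal E_Y$, where I would invoke a method-of-mixtures martingale. Define $M_t := \eE_{\bd'\sim\D}\!\left[\mathcal L_{\bd'}(\bx^t,\bsigma^t)\right]/\mathcal L_{\bd}(\bx^t,\bsigma^t)$, the ratio of the mixture likelihood (the numerator of $W_{a,b}(t)$ in~\eqref{def_Wabt}) to the likelihood under the true instance $\bd$. First I would show that, under $\pP_{\bd}$, $M_t$ is a nonnegative martingale with $\eE_{\bd}[M_t]=1$: for each fixed $\bd'$, the one-step conditional expectation of $\mathcal L_{\bd'}/\mathcal L_{\bd}$ equals one because the identity-based transition probabilities $\frac{d'_j-S_j(t)}{\sum_i d'_i}$ and $\frac{S_j(t)}{\sum_i d'_i}$ sum to one over all outcomes (with the convention that $\mathcal L_{\bd'}=0$ as soon as some $S_j(t)>d'_j$, which is exactly where $\bd'$ assigns zero probability), so each likelihood ratio is a martingale and $M_t$ is a mixture of such martingales by Tonelli. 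On the event $\mathcal E_Y$ I would then chain
\[
Y(t)\ \overset{\text{Lem.~\ref{Lemma:Y}}}{=}\ \min_{b\neq \tilde a_t} Y_{\tilde a_t,b}(t)\ \le\ Y_{\tilde a_t,1}(t)\ \overset{\eqref{Yabt_lower_bound}}{\le}\ W_{\tilde a_t,1}(t)\ \le\ \log M_t,
\]
where the penultimate inequality uses that $S_{\tilde a_t}(t)\ge S_1(t)$ by the definition of $\tilde a_t$ (the hypothesis under which~\eqref{Yabt_lower_bound} holds), and the last uses that the supremum in the denominator of $W_{\tilde a_t,1}(t)$ is over $\{\bd':d'_{\tilde a_t}\le d'_1\}$, a set containing the true $\bd$ (since $d_1>d_{\tilde a_t}$ whenever $\tilde a_t\neq 1$), so that $\sup_{d'_{\tilde a_t}\le d'_1}\mathcal L_{\bd'}\ge \mathcal L_{\bd}$. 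Consequently $\mathcal E_Y\subseteq\{\sup_t M_t > e^{\beta(t,\delta/2)}\}$, and Ville's maximal inequality yields $\pP(\mathcal E_Y)\le e^{-\beta(t,\delta/2)} = \tfrac{\delta}{2(K-1)}\le \delta/2$. (Equivalently, one may union over the $K-1$ candidate wrong modes $a\neq 1$, bounding each $\pP(\exists t: W_{a,1}(t)>\beta)$ by the same martingale, which is how the factor $K-1$ enters the threshold.) Combining the two bounds gives $\pP(\tau_\delta<\infty,\ \hat a_{\tau_\delta}\neq 1)\le \pP(\mathcal E_Z)+\pP(\mathcal E_Y)\le \delta$.

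The main obstacle is the martingale verification for $M_t$ under the Markovian observation law: one must confirm that the mixture of likelihood ratios is genuinely a mean-one martingale despite the non-i.i.d. dynamics and the fact that many instances $\bd'$ in the support of $\D$ are incompatible with the observed distinct-count vector $S(t)$ (and therefore contribute zero likelihood). Once this is established, the relaxation bound $Y_{a,b}(t)\le W_{a,b}(t)$ guarantees that the computationally tractable statistic inherits the soundness of the ideal statistic $W_{a,b}(t)$, and the remainder is the standard GLR/Ville bookkeeping.
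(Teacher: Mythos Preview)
Your proof is correct and follows essentially the same route as the paper: both split the error by stopping condition, invoke Theorem~\ref{thm:correctness_identityless} (at level $\delta/2$) for the \nialgo\ part, and for the identity-based part use the chain $Y(t)\le Y_{\tilde a_t,1}(t)\le W_{\tilde a_t,1}(t)$ together with the fact that the true instance $\bd$ lies in the alternative set $\{d'_{\tilde a_t}\le d'_1\}$. The only difference is the final bookkeeping: you control the mixture likelihood via Ville's inequality on the martingale $M_t$, whereas the paper sums $\eE_{\bd'\sim\D}[\mathcal L_{\bd'}(\omega)]$ directly over the disjoint stopping events $E_b^t$ and then unions over the $K-1$ wrong candidates~$b$ (your single-martingale argument shows this union is not strictly needed, yielding the slightly sharper $\pP(\mathcal E_Y)\le \delta/(2(K-1))$).
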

\begin{theorem}\label{thm:optimality_identitybased}
  Under the identity-based sampling model, given $\delta \in (0,1),$
  the stopping time $\tau_\delta$ for \ibalgo$(\alpha,\theta)$
  satisfies $\pP(\tau_\delta < \infty) =1$ and
  \begin{align}\label{eq:asymp_op_1_wi}
    \pP\left( \limsup_{\delta \to 0} \frac{\tau_\delta}{\log(1/\delta)} \leq
    \left(\log\left(\frac{N-d_2 + d_1}{N}\right)\right)^{-1}\right) = 1. 
  \end{align}
  Moreover,
  \begin{align}\label{eq:asymp_op_2_wi}
  	\limsup_{\delta \to 0}\frac{\eE[\tau_\delta]}{\log\left(\frac{1}{\delta}\right)} \leq \frac{1}{\log\left(\frac{N - d_2 + d_1}{N}\right)}.
  \end{align}
\end{theorem}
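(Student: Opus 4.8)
The plan is to prove all three claims by exploiting the \emph{absorbing} structure of the identity-based observation process and reducing everything to an upper bound on the $Y$-driven stopping time. Write $\tau^Y := \inf\{t > \sum_{j}S_j(t) + K^{(t)} \colon Y(t) > \beta(t,\delta/2)\}$ for the stopping time triggered by the $Y$-statistic alone. Since \ibalgo$(\alpha,\theta)$ stops as soon as \emph{either} the \nialgo\ condition or the $Y$-condition fires, $\tau_\delta \le \tau^Y$ on every sample path, so it suffices to control $\tau^Y$. Soundness is already furnished by Theorem~\ref{thm:correctness_identitybased}, hence here I only need the upper bounds. By Lemma~\ref{Lemma:Y}, $Y(t) = Y_{\tilde{a}_t,\tilde{b}_t}(t)$, so the analysis reduces to tracking a single pairwise statistic. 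This is a specialization of the generic GLR concentration argument of~\cite{garivier2016optimal,karthik2020}, made cleaner by the finiteness of the population.

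Next I would establish saturation. Let $T_0 := \inf\{t \colon S_j(t) = d_j \ \forall j \in \K\}$ be the first epoch at which every individual has been sampled. Since samples are drawn uniformly with replacement, a union bound gives $\pP(T_0 > t) \le N(1-1/N)^t$, so $T_0 < \infty$ almost surely and $\eE[T_0] < \infty$. For $t \ge T_0$ the vector $S(t) = \bd$ is frozen, $K^{(t)} = K$, and (because $d_1 > d_2 \ge \cdots$) $\tilde{a}_t = 1$ and $\tilde{b}_t = 2$. Consequently, for $t \ge \max(T_0, N+K+1)$ the statistic $Y(t)$ equals a \emph{deterministic} function $\Psi(t)$ of $t$, obtained by substituting $S_j(t) = d_j$ into~\eqref{eq:Yabt} (with $\gamma_0=\gamma_0(t)$ the deterministic root of $g(\gamma)=0$). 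This collapse of $Y(t)$ onto a deterministic curve past saturation is what makes the identity-based analysis tractable.

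The crux, and the step I expect to be the main obstacle, is to show $\Psi(t)/t \to g^{\star} := \log\!\big(\tfrac{N - d_2 + d_1}{N}\big)$, which is strictly positive since $d_1 > d_2$. For the numerator $T_1$ in~\eqref{eq:Wab_T1}, the sum over $\bd' \in \S_{\alpha}(t)$ (which contains $\bd'=\bd$ as $\alpha \ge 1$) is dominated for large $t$ by the term of smallest total mass $\sum_i d'_i$, namely $\bd'=\bd$ with $\sum_i d'_i = N$; hence $T_1 = c_1 + \log\pP_{\D}(\bd) - t\log N + o(t)$, the $\alpha$-dependence and competing terms contributing only $o(t)$. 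For the denominator I would invoke Lemma~\ref{lemma:sup_pf_ST_Yabt_wi}: as $t \to \infty$ the root $\gamma_0(t)$ decreases to $0$, because $g_1$ is decreasing on $(0,1)$ with $g_1(\gamma)\to\infty$ as $\gamma\downarrow 0$ while $g_1(\gamma_0)=t$. Passing $\gamma_0\downarrow 0$ in the optimizer, $r(\gamma_0)\to |S_1-S_2| = d_1-d_2$, so $d^{1,2}_1(\gamma_0)=d^{1,2}_2(\gamma_0)\to d_1$ and $d^{1,2}_j(\gamma_0)\to d_j$ for $j\ge 3$; therefore $\sum_i d^{1,2}_i(\gamma_0)\to N - d_2 + d_1$, the integral terms converge to a finite constant $c_2$, and the supremum equals $c_2 - t\log(N - d_2 + d_1) + o(t)$. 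Subtracting gives $\Psi(t) = (c_1 + \log\pP_{\D}(\bd) - c_2) + t\,g^{\star} + o(t)$. The delicate points are establishing $\gamma_0(t)\downarrow 0$ rigorously and controlling the $o(t)$ error inside $t\log(\sum_i d^{1,2}_i(\gamma_0(t)))$, since $\gamma_0$ depends on $t$; both reduce to the monotonicity and limiting behaviour of $g_1$ already needed for Lemma~\ref{lemma:sup_pf_ST_Yabt_wi}.

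Finally I would conclude by sandwiching. Since $g^{\star}>0$, $\Psi$ is eventually strictly increasing with $\Psi(t)\to\infty$; set $t^{\star}(\delta) := \inf\{t\colon \Psi(t) > \beta(t,\delta/2)\}$, and note $\beta(t,\delta/2)=\log(2(K-1)/\delta)$ is constant in $t$, so $t^{\star}(\delta) = (g^{\star})^{-1}\log(1/\delta)\,(1+o(1))$ as $\delta\da 0$. For $t \ge \max(T_0,N+K+1)$ one has $Y(t)=\Psi(t)$, and by monotonicity $\Psi$ crosses the constant threshold no later than $\max(T_0, t^{\star}(\delta))$, yielding $\tau^Y \le \max(T_0,\, t^{\star}(\delta)) + O(1)$. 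Finiteness $\pP(\tau_\delta<\infty)=1$ then follows from $\Psi(t)\to\infty$ and $T_0<\infty$ a.s. For~\eqref{eq:asymp_op_1_wi}, fix a sample path: $T_0(\omega)$ is finite, so for all small $\delta$, $t^{\star}(\delta) > T_0(\omega)$ and $\tau_\delta \le \tau^Y \le t^{\star}(\delta)+O(1)$, whence $\limsup_{\delta\to 0}\tau_\delta/\log(1/\delta)\le 1/g^{\star}$ almost surely. For~\eqref{eq:asymp_op_2_wi}, take expectations in $\tau^Y \le T_0 + t^{\star}(\delta)+O(1)$; dividing by $\log(1/\delta)$ and using $\eE[T_0]<\infty$ together with $t^{\star}(\delta)\sim\log(1/\delta)/g^{\star}$ gives $\limsup_{\delta\to 0}\eE[\tau_\delta]/\log(1/\delta)\le 1/g^{\star}$. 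This matches the lower bound of Lemma~\ref{lemma:lb_identitybased} up to the harmless $+1$ inside the logarithm.
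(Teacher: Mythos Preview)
Your proposal is correct and follows essentially the same route as the paper: both hinge on the saturation event $\{S_j(t)=d_j\ \forall j\}$ occurring a.s.\ in finite time, after which $Y(t)$ becomes a deterministic function of $t$; both then show $\gamma_0(t)\downarrow 0$ (via the monotonicity of $g_1$) so that $\sum_i d^{1,2}_i(\gamma_0)\to N-d_2+d_1$ and hence $Y(t)/t\to g^{\star}$, and conclude by the standard threshold–crossing argument. The one noteworthy difference is in the expectation bound~\eqref{eq:asymp_op_2_wi}: the paper mechanically replays the Garivier--Kaufmann template, defining events $\mathcal{E}_T=\bigcap_{t=\sqrt{T}}^T\{S(t)=\bd\}$ and summing $\pP(\mathcal{E}_T^c)\le N(1-1/N)^{\sqrt{T}}$ over $T$; you instead bound $\tau^Y\le T_0+t^{\star}(\delta)+O(1)$ directly and use $\eE[T_0]<\infty$, which is cleaner here because the finite population makes saturation exact rather than merely a concentration phenomenon. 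One small caveat: you invoke ``monotonicity of $\Psi$'' to locate the crossing, but $\Psi(t)/t\to g^{\star}$ does not literally give monotonicity; it suffices (and is what you actually need) to take $t^{\star}(\delta):=\inf\{t:\Psi(s)>\beta(\delta/2)\ \forall s\ge t\}$, which is finite and still satisfies $t^{\star}(\delta)/\log(1/\delta)\to 1/g^{\star}$.
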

The proofs of Theorem \ref{thm:correctness_identitybased} and Theorem
\ref{thm:optimality_identitybased}, which follow via arguments
analogous to those used to prove the corresponding statements under
identityless sampling, are presented in~\ref{appendix:identitybased}.
We conclude with the following remarks.
\begin{itemize}
\item Comparing Theorem~\ref{thm:optimality_identitybased} with
  Lemma~\ref{lemma:lb_identitybased}, we see that the lower bound (on
  expected stopping time) nearly matches the (almost sure) asymptotic
  upper bound; the term $(d_1 - d_2+1)$ in the former is replaced by
  $(d_1 - d_2)$ in the latter. The same goes for the upper bound on
  expected sample complexity.  This establishes that our family of
  algorithms is asymptotic optimal in the limit~$\delta \da 0;$ the
  small discrepancy between the upper and lower bounds stems from the
  inherent discreteness of the space of problem instances.

\item The asymptotic optimality guarantee of
  Theorem~\ref{thm:optimality_identitybased} demonstrates that the
  proposed algorithms do indeed optimally exploit identity information
  for efficient community mode estimation. Interestingly, these
  algorithms are based on simply keeping track of the number of
  distinct individuals seen from each community.
  
\item The soundness and asymptotic optimality guarantees of the
  proposed family of algorithms do not depend on the choices of the
  parameters $\theta$ (a distribution suppored on~$\N$) and $\alpha$
  (a natural number). Intuitively, soundness follows because the
  statistic we employ is a lower bound of another sound statistic
  (derived using~$W_{a,b}(\cdot)$). Asymptotic optimality is a
  consequence of the (eventual) concentration of the likelihood
  function around the actual instance parameters~$d.$

\item That said, the choices of the parameters $\theta$ and $\alpha$
  do influence algorithm performance for moderate (and practical)
  values of $\delta.$ Interestingly, we find in our numerical case
  studies (see Section~\ref{sec:results}) that setting $\alpha=1$
  minimizes the computational complexity per sampling epoch, without
  significantly affecting the sampling complexity. On the other hand,
  choosing the distribution~$\theta$ to have a \emph{heavier tail}
  results in a significant reduction in the average stopping time.

  \item Finally, we note that it is also possible to design a `1v1'
    variant of \ibalgo\, using the same design principle as
    \nialgo-1v1. While it is easy to show that such a 1v1 variant is
    sound, it turns out that it does not inherit the asymptotic
    optimality properties of~\ibalgo. Mathematically, this is a
    consquence of the specific form of the information theoretic lower
    bound under the identity-based sampling model. Intuitively, the
    1v1 variant is less effective in the identity-based setting
    because, unlike in the identityless case, discriminating between
    communities~$a$ and~$b$ is aided by observations (specifically,
    collisions) from a third community~$c.$
\end{itemize}

\section{Experimental Results}
\label{sec:results}
In this section, we validate our theoretical results via numerical
experiments using both synthetic data as well as data gathered from 
real-world datasets. We compare the performance of
\nialgo\ (identityless setting), \nialgo-1v1 (which is identical to
PPR-1v1 proposed in~\citet{Jain2022}), and \ibalgo\ (identity-based
setting). Recall that in the \ibalgo\ algorithm, until $t> \sum_{j\in
  \K}S_j(t) + K^{(t)}$ (i.e., until a certain number of repeated
samples occur), \ibalgo\ relies on \nialgo\ to estimate the mode. We
also consider another variant of \ibalgo\ that piggybacks on
\nialgo-1v1 instead of \nialgo.

We set the target error probability $\delta = 0.1$ and average our
results over $100$ runs, where in each run we randomly sample from the
underlying instance and record the stopping times for all the
algorithms. We should mention that the mode is correctly identified in
almost all runs of our experiments (in other words, all algorithms are
over-conservative from the standpoint of error); we therefore focus
primarily on the average stopping time.

For \nialgo\ our prior distribution $\P$ is fixed to be the Dirichlet
distribution. But for \ibalgo, we also evaluate the effect of the
choice of prior distribution $\D$ as well as the parameter $\alpha$
which provides a trade-off between computational complexity (per
sampling epoch) and sample complexity. We consider two different prior
distributions $\D$, by letting the associated pmf $\theta$
\eqref{Eqn:Prior} be the Geometric distribution with parameters
$q=0.1$ and $q=0.9$ respectively. We consider $\alpha = 1$ and $\alpha
= 3$, where the latter requires significantly more computational
effort.
\begin{table}
\small
\begin{center}
\begin{tabular}{||c c c c ||} 
\hline
Algorithm & Parameters                                       & Average \qquad Stopping  & time  \\
		  &                                                   & $\mathcal{I}_1$  & $\mathcal{I}_2$ \\
\hline\hline
\ibalgo\                  & $\alpha =1$,  Geometric prior with $q =0.1$ & 239.4 &  413.65  \\ 
                          & $\alpha =1$,  Geometric prior with $q =0.9$ & 626.08 &  1536.1 \\
\hline
\ibalgo\ with \nialgo-1v1 & $\alpha =1$,  Geometric prior with $q =0.1$ & 209.15 &  396.49 \\ 
\hline
\nialgo\                  &                                             & 669.85 &  3968.5 \\
\nialgo-1v1/PPR-1v1               &                                             & 275.9  &  1410.3 \\
\hline    
\ibalgo\                
                         & $\delta =0.1$, $\alpha =3$,  Geometric prior with $q =0.1$ & 235.85 &  412.6500  \\  
\hline          
\end{tabular}
\caption{\label{table_1}Comparison of average stopping times for $\mathcal{I}_1 = [20,12,8,5,5]$ and $\mathcal{I}_2 = [20,16,6,4,4]$.}
\end{center}
\end{table}
We present a comparison of the performance of \nialgo,
\nialgo-1v1/PPR-1v1, \ibalgo, and \ibalgo\ with \nialgo-1v1 in
Table~\ref{table_1} in terms of the average stopping time. We consider
two instances $\mathcal{I}_1 = [20,12,8,5,5]$ and $\mathcal{I}_2 =
[20,16,6,4,4]$, both with $50$ individuals and $5$ communities
each. Note that $\mathcal{I}_2$ is a harder instance than
$\mathcal{I}_1$ since the two largest community sizes are
closer. Comparing the two identityless sampling algorithms, we find
that \nialgo\ empirically performs worse than \nialgo-1v1/PPR-1v1, as
anticipated. The average stopping time for both instances
$\mathcal{I}_1$ and $\mathcal{I}_2$ is lowest for \ibalgo\ with
\nialgo-1v1 which uses identity-based sampling, with $\alpha =1$ and the 
Geometric prior with $q =0.1$. For the easier instance $\mathcal{I}_1$
the average stopping time for \ibalgo, for the heavier-tailed prior
(Geometric prior with $q =0.1$) is comparable to the average stopping
time of PPR-1v1. Thus for the easier instance, the identity
information does not seem to provide a significant boost in mode
estimation. However, for the harder instance $\mathcal{I}_2$, we can
see from Table \ref{table_1} that \ibalgo\ performs much better than
\nialgo-1v1/PPR-1v1 with the heavier-tailed prior. This shows that
identity information plays a vital role in reducing the stopping time
as the instance gets harder.

We also note that compared to a sharply decreasing prior distribution
like the Geometric prior with $q=0.9$, a prior distribution with heavier
tail like the Geometric prior with $q=0.1$, significantly reduces the
average stopping time of \ibalgo. Also, we observe that increasing the
value of $\alpha$ from $1$ to $3$ does not seem to have a significant
impact on the average stopping time. We could not evaluate even larger
values of $\alpha$ due to the sharp increase in computational
complexity.

We also compare the performance of the various schemes in the scenario where the target error probability $\delta$ as well as the ratios of the community sizes is fixed, but the 
overall population size $N$ is scaled by a factor of $\omega$. We choose $\alpha =1$ 
and a Geometric prior with 
$q =0.1$ for \ibalgo\ and \ibalgo\ with \nialgo-1v1. 
The results for various values of $\omega$ are presented in Table \ref{table_2}. We find that for small to moderate values of $N$, identity-based mode 
estimation algorithms \ibalgo\ and \ibalgo\ with \nialgo-1v1 
perform better than the identityless sampling algorithms \nialgo-1v1 and \nialgo. 
But as $N$ becomes large, the performance of both \ibalgo\ with \nialgo-1v1 and 
\nialgo-1v1 become comparable. Interestingly however, we should note that our analytical results (see Lemma \ref{lemma:comparison_of_LB}) imply that the multiplicative gap
between the \emph{lower bounds} in the identityless and identity-based
settings holds even when the population size $N$ is large. We are not
entirely sure whether the contrast between the lower bounds and
empirical performance is due to (i) an inefficiency of our algorithms,
or (ii) due to a looseness in our lower bounds. Unfortunately, our
methodological machinery is designed for the scaling where the
instance (and therefore~$N$) is fixed and $\delta \downarrow 0,$ and
not for the scaling where~$\delta$ is fixed and $N$ is scaled up. 
Finally, it is also important to note that $N$ is assumed to be
unknown to the learner. So in principle, even the absence of
collisions (which is likely when $N$ is large) provides additional
information to the learner in the identity-based sampling model; thus it
might be possible to design an algorithm that exploits this
information better (our algorithm admittedly does not).
\begin{figure}[t]
	\begin{center}
		\includegraphics{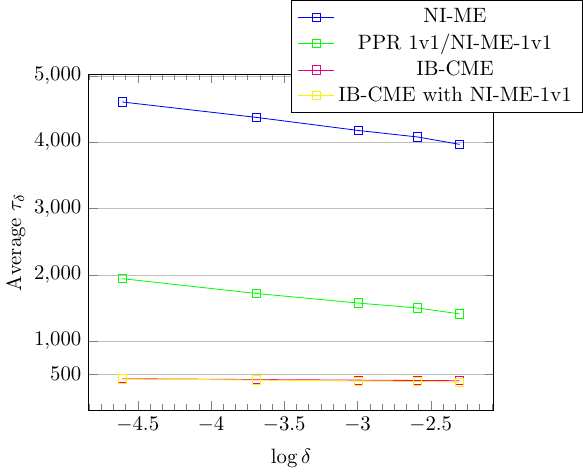}
		\caption{\label{plot}Average stopping time $\tau_\delta$ v/s $\log(\delta)$ for $\bd=[20,16,6,4,4]$.}
	\end{center}
\end{figure}

Finally, we present the effect of varying the target error probability $\delta$ on the average 
stopping time of the various schemes in Figure \ref{plot}. We consider 
the instance $\mathcal{I}_2$, and choose $\alpha =1$ and a Geometric prior with 
$q =0.1$ for \ibalgo\ and \ibalgo\ with \nialgo-1v1. As expected, the sample complexity varies linearly with $\log(\delta)$. 
 
	\begin{table}
		\small
		\begin{center}
			\begin{tabular}{|| c c c c c||} 
				\hline
				Algorithm                  & \ibalgo\         &\ibalgo\   & \nialgo-1v1/PPR-1v1 & \nialgo\\
				                           & with \nialgo-1v1         &          &         &              \\
				\hline
				Average Stopping time for  &                  &          &         &              \\
				$\omega = 1,  N=50$        & 710.535           & 711     & 6353.5  &  19705       \\
				$\omega = 5,  N=250$       & 1911.7           & 2008.1   & 6480.3  &  19707       \\
				$\omega = 10, N=500$       & 3170.1           & 3538.9   & 6260.7  &  19518       \\
				$\omega = 15, N=750$       & 4475.4           & 5076.1   & 6550.8  &  19727       \\
				$\omega = 20, N=1000$      & 5262.5           & 6590.5   & 6220.5  &  20559       \\
				$\omega = 25, N=1250$      & 6019.8           & 8077.9   & 6582.7  &  19873       \\
				$\omega = 30, N=1500$      & 6344.6           & 9593.5   & 6220.5  &  20559       \\
				$\omega = 35, N=1750$      & 6885.6           & 10995    & 6550.8  &  19727       \\
				$\omega = 40, N=2000$      & 7102.3           & 12367    & 6550.8  &  19727       \\
				\hline      
			\end{tabular}
			\captionof{table}{\label{table_2}Comparison of average stopping times for $\mathcal{I}_3 = \omega[20,18,6,3,3].$}
		\end{center}
	\end{table}

\begin{table}
\small
	\begin{center}
		\begin{tabular}{|| c c c c c||} 
			\hline
			Algorithm             & \ibalgo\         &\ibalgo\  & \nialgo-1v1/PPR-1v1 & \nialgo  \\
			                      & with \nialgo-1v1 &          &                     &          \\
			                      \hline
			 &                  & Average Stopping time         &                     &          \\
			$\text{Dataset}-1$    & 1411.7           & 1492.7   & 2061.3              & 13284    \\
			$\text{Dataset}-2$    & 3081.5           & 3717.1   & 4430.9              & 28166    \\
			$\text{Dataset}-3$    & 554.52           & 3049.2   & 505.98              & 3344.2   \\
			\hline      
		\end{tabular}
		\captionof{table}{\label{table_3}Comparison of average stopping times for real-world datasets.}
	\end{center}
\end{table}

We also evaluate the various schemes on instances derived from three 
 	real-world datasets of different 
 	sizes. For each dataset, we choose $\alpha =1$ and a Geometric prior with $q =0.1$ for 
 	\ibalgo\ and \ibalgo\ with \nialgo-1v1. The first dataset lists food orders placed by customers on an online portal across $179$ restaurants 
 belonging to $14$ different cuisines~\cite{NYC}. We refer to this dataset as 
 	$\text{Dataset}-1$. Mapping this data to our framework, the restaurants correspond 
 to the individual entities and the corresponding cuisines represents the communities. 
 The underlying instance created this way is given by $[41, 16, 3, 14, 31, 29, 5, 5, 11, 7, 2,3,9,3]$ 
 and the goal is to estimate the  cuisine with largest number of restaurants by using random sampling. 

    Next we consider a dataset, referred to as 
	$\text{Dataset}-2$ henceforth, which contains a list of $500$ top cities in $12$ different 
	regions of the world with respect to their innovation index \cite{500_cities}. We are interested in 
	finding the region with most number of innovative cities. Here, cities correspond to 
	the individual entities whereas the region they belong to represent the communities. 
	The instance created from this dataset is given by $[19, 39, 14, 44, 18, 139, 13, 12, 39, 25, 20, 118]$. 
	The aim is to determine the region with maximum number of innovative cities by using random 
	sampling. Finally, we consider $\text{Dataset}-3$, which consists of $1000$ top rated IMDb movies under $14$ diffent genre \cite{IMDb}. Here the movies are individual entities and their 
	genres are the communities. The instance 
	created from the dataset is given by $[172, 72, 82, 88, 155, 107, 289, 2, 2, 3, 11, 12, 1, 4]$. 
	The goal here is to determine the most popular genre i.e. the genre with maximum number of 
	movies listed.
	
	Note that the three datasets have increasing population sizes and while the first two datasets have similar gaps between the top two community sizes, the third dataset has a much larger gap indicating an easier problem instance. The performance results for the various mode estimation algorithms when used over  the 
	above three datasets are presented in Table~\ref{table_3}. For $\text{Dataset}-1$ and $\text{Dataset}-2$ we can see that \ibalgo\ with 
	\nialgo-1v1 algorithm has the lowest average stopping time. The identity-based algorithms perform 
	better than the identityless sampling algorithms, which 
	illustrates the significant reduction in sample complexity that identity information can provide. 
    The \nialgo-1v1 algorithm performs better than the \nialgo\ algorithm as expected. However, for $\text{Dataset}-3$ which  represents the easier problem instance, 
    the performance of algorithm \ibalgo\ with 
    \nialgo-1v1 is comparable to \nialgo-1v1 algorithm with \nialgo-1v1 performing slightly better. 
    The performance of the \ibalgo\ algorithm is only slightly better than the \nialgo\ algorithm, which is likely due to the fact that for a larger population size $N$ the advantage provided by identity information is reduced due 
    to lack of collisions or repeated samples.

\section{Concluding remarks}
\label{sec:conclusion}

In this paper, we considered the learning task of online community
mode estimation in a fixed confidence setting, focusing on the role of
identity information. Our analysis relies heavily on the existing
GLR-based machinery for establishing correctness and asymptotic
optimality of PAC algorithms (most notably, the methodology developed
in~\cite{garivier2016optimal,karthik2020}). Our key novelty is: \\ (i)
From a modeling standpoint, we highlight the value of \emph{identity}
information in mode estimation.\\ (ii) We modify the GLR statistic
proposed in \cite{karthik2020} to design a tractable algorithm for the
identityless sampling model. A superior `1v1' variant of this
algorithm is also designed, which turns out to be identical to the
state of the art algorithm for (identityless) mode estimation--PPR-1v1
(see \cite{Jain2022}), which was designed using a different design
recipe.\\ (ii) We design a sound stopping condition under a
\emph{Markovian} observation sequence (arising under identity-based
sampling), and further design a computationally tractable relaxation
of the stopping statistic that preserves both soundness and asymptotic
optimality (Lemma~\ref{lemma:sup_pf_ST_Yabt_wi} is crucial to this
relaxation); indeed, that the discrete likelihood optimization over
$\mathbb{N}^K$ in~\eqref{def_Wabt} can be tractably relaxed is far from trivial.

\bibliographystyle{plainnat}
\bibliography{example_paper}
\clearpage
\appendix

\section{Identityless sampling}
\label{appendix:identityless}
\begin{proof}[Proof of Lemma~\ref{simple_Zabt_iless}]
The statistic
\begin{equation*}
	\displaystyle
	Z_{a,b}(t) := \log\frac{ \eE_{\bp' \sim \P}[\mathcal{L}_{\boldsymbol{p}'}(\bx^t)] }{\max_{\bp' \in \mathcal{S}: p'_a\leq p'_b}\mathcal{L}_{\boldsymbol{p}'}(\bx^t)}.
\end{equation*}
 For the Dirichlet prior on $\boldsymbol{p}'\in \mathcal{S}$,  
 \begin{align}  
 	 \eE_{\bp' \sim \P}[\mathcal{L}_{\boldsymbol{p}'}(\bx^t)] 
 	& = \int_{\boldsymbol{p}' \in \mathcal{S}} \prod_{i=1}^{K}p_i^{N_i(t)}.\frac{1}{B(1,1,\ldots,1)}dp'_1dp'_2\ldots dp'_K \nonumber \\
 	& \overset{(b)}{=} \frac{B(N_1(t)+ 1, N_2(t)+1, \ldots , N_K(t)+1)}{B(1,1,\ldots,1)} \int_{\boldsymbol{p}' \in \mathcal{S}} \frac{\prod_{i=1}^{K}p_i^{N_i(t)}dp'_1dp'_2\ldots dp'_K}{B(N_1(t)+ 1, N_2(t)+1, \ldots , N_K(t)+1)}. \nonumber \\
 	& \overset{(c)}{=} \frac{B(N_1(t)+ 1, N_2(t)+1, \ldots , N_K(t)+1)}{B(1,1,\ldots,1)}. \label{exp_Num_identityless}
 \end{align} 
 In $(b)$, $B(N_1(t)+ 1, N_2(t)+1, \ldots , N_K(t)+1)$ is the normalization factor for a Dirichlet 
 distribution with paramaters $\psi_i = N_i(t) + 1$ for all $i \in \K$. The equality $(c)$ follows 
 as the integrad in $(b)$ is the pdf of a Dirichlet distribution with parameters $\psi_i = N_i(t) + 1$ 
 for all $i \in \K$. So, being the integral of a pdf over the entire domain, it integrates to 1. Thus, 
 \begin{equation*}
 	\displaystyle
 	Z_{a,b}(t) = \log\frac{B(N_1(t)+ 1, N_2(t)+1, \ldots , N_K(t)+1)}{B(1,1,\ldots,1)} - \max_{\bp' \in \mathcal{S} \colon p'_a
 		\leq p'_b}\log(\mathcal{L}_{\boldsymbol{p}'}(\bx^t)). 
 \end{equation*}

  Consider the maximization: $\max_{\bp' \in \mathcal{S} \colon p'_a
    \leq p'_b}\log(\mathcal{L}_{\boldsymbol{p}'}(\bx^t))$ when $\hat{p}_a(t) \geq \hat{p}_b(t)$. This is a
  convex optimization. Denoting the Lagrange multiplier associated
  with the constraint $p'_a \leq p'_b$ by $\lambda,$ and the Lagrange
  multiplier associated with the simplex constraint by $\nu,$ the KKT
  conditions imply that the primal optimal point $\boldsymbol{p}^*$
  and dual optimal points $\lambda^*$, $\nu^*$ satisfy:

  \begin{align*}
    p^*_a & \leq p^*_b\\
    \lambda^* & \geq 0\\
    \lambda^* (p^*_a - p^*_b) & = 0\\
    \sum_{i=1}^{K}p^*_i - 1 & = 0 \\
    -\frac{N_i(t)}{p^*_i} + \nu^* & = 0  \hspace{1cm} \forall i \in \mathcal{K}\setminus \{ a,b\} \\
    -\frac{N_a(t)}{p^*_a} + \lambda^* + \nu^* &= 0 \\ 
    -\frac{N_b(t)}{p^*_b}  - \lambda^* +  \nu^* &= 0.
  \end{align*}
  From first three conditions we can see that either $\lambda^* = 0$
  (then the problem reduces to unconstrained maximization of the likelihood function) or $p^*_a - p^*_b = 0$. If $\lambda^*=0$
  then $p^*_i = \hat{p}_i(t)$. This would violate the constraint
  $p'_a\leq p'_b$ when $\hat{p}_a(t) \geq
  \hat{p}_b(t)$. Taking $p^*_a = p^*_b$ and solving the gradient
  equations we get $ \nu^* = t$, $\lambda^* =
  t\frac{N_b(t)-N_a(t)}{N_b(t) + N_a(t)}$ and $ p^*_a = p^*_b =
  \frac{N_a(t) + N_b(t)}{2t} = \frac{\hat{p}_a(t) +
    \hat{p}_b(t)}{2}$. Therefore,
	 	\begin{equation}
	 		\max_{\bp' \in \mathcal{S} \colon p'_a
	 			\leq p'_b}\log(\mathcal{L}_{\boldsymbol{p}'}(\bx^t)) = \sum_{i \in \mathcal{K}\setminus \{ a,b\} } N_i(t) \log(\hat{p}_i(t)) + (N_a(t) + N_b(t))\log\left(\frac{\hat{p}_a(t) + \hat{p}_b(t)}{2}\right). \label{part2_zab}
	 	\end{equation}
	 	Thus $Z_{a,b}(t)$ simplifies as 
	 	  \begin{align*}
	 		Z_{a,b}(t) &= \log\frac{B(N_1(t)+ 1, N_2(t)+1, \ldots , N_K(t)+1)}{B(1,1,\ldots,1)} \nonumber \\ 
	 		& \hspace{3cm} - \sum_{i \in \K\setminus\{a,b\}}N_i(t)\log(\hat{p}_i(t)) -
	 		(N_a(t) + N_b(t))\log\left(\frac{\hat{p}_a(t) + \hat{p}_b(t)}{2}\right).
	 	\end{align*}
 	    From the definition of $Z_{a,b}(t)$ and \eqref{exp_Num_identityless}, when $\hat{p}_a(t) \geq \hat{p}_b(t)$,
 	    \begin{align}
 	    	Z_{b,a}(t) & = \log\frac{B(N_1(t)+ 1, N_2(t)+1, \ldots , N_K(t)+1)}{B(1,1,\ldots,1)} - \max_{\bp' \in \mathcal{S} \colon p'_b
 	    		\leq p'_a}\log(\mathcal{L}_{\boldsymbol{p}'}(\bx^t)) \nonumber \\
 	    	& \overset{(a)}{=} \log\frac{B(N_1(t)+ 1, N_2(t)+1, \ldots , N_K(t)+1)}{B(1,1,\ldots,1)} - \sum_{i=1}^{K}N_i(t)\log(\hat{p}_i(t)). 
 	    \end{align}
 	    In the above equation, $(a)$ follows from the fact that when $\hat{p}_a(t) \geq \hat{p}_b(t)$, $\max_{\bp' \in \mathcal{S} \colon p'_b
 	    \leq p'_a}\log(\mathcal{L}_{\boldsymbol{p}'}(\bx^t))$ reduces to unconstrained maximization of the Log-Likelihood, $\log(\mathcal{L}_{\boldsymbol{p}'}(\bx^t))$. 
        This is achieved at $p'_i = \hat{p}_i(t)$ for all $i \in \K$. 
\end{proof}


\begin{proof}[Proof of Lemma~\ref{Lemma:Z}]
When $\hat{p}_a(t) \geq \hat{p}_b(t)$, from Lemma~\ref{simple_Zabt_iless},
\begin{align*}
	\displaystyle
	Z_{a,b}(t) & = \log\frac{B(N_1(t)+ 1, N_2(t)+1, \ldots , N_K(t)+1)}{B(1,1,\ldots,1)} \nonumber \\ 
	& \hspace{3cm} - \sum_{i \in \K\setminus\{a,b\}}N_i(t)\log(\hat{p}_i(t)) -
	(N_a(t) + N_b(t))\log\left(\frac{\hat{p}_a(t) + \hat{p}_b(t)}{2}\right).
\end{align*}
and 
\begin{align}
	Z_{b,a}(t) 
	= \log\frac{B(N_1(t)+ 1, N_2(t)+1, \ldots , N_K(t)+1)}{B(1,1,\ldots,1)} - \sum_{i=1}^{K}N_i(t)\log(\hat{p}_i(t)). \label{Zbat}
\end{align}
Comparing $Z_{a,b}(t)$ with $Z_{b,a}(t)$ we get
\begin{equation}
	Z_{a,b}(t) >  Z_{b,a}(t) \text{ when }  \hat{p}_a(t) \geq \hat{p}_b(t) \label{Zab_vs_Zba}.
\end{equation}
Also, note that for any pair $a,b$, when $\hat{p}_a(t) \geq \hat{p}_b(t)$, $Z_{b,a}(t)$ has the same value. Now, we compute
\begin{align*}
	Z(t) & = \max_{a \in \K}\min_{b \in \K\setminus\{a\}}Z_{a,b}(t) \\
	& = \max\left\{\min_{b \neq \hat{a}_t}Z_{\hat{a}_t,b}(t), \max_{x \neq \hat{a}_t}\min_{b \neq x}Z_{x,b}(t)\right\} \\
	& \overset{(b)}{=} \max\left\{ Z_{\hat{a}_t, \hat{b}_t}(t), \max_{x \neq \hat{a}_t}Z_{x,\hat{a}_t}(t) \right\} \\
	& \overset{(c)}{=} \max\left\{ Z_{\hat{a}_t, \hat{b}_t}(t), Z_{\hat{b}_t,\hat{a}_t}(t) \right\} \\
	& \overset{(e)}{=} Z_{\hat{a}_t, \hat{b}_t}(t).
\end{align*}
Equality $(b)$ follows because 
$$\max_{x \neq \hat{a}_t}\min_{b \neq x}Z_{x,b}(t) = \max_{x \neq \hat{a}_t}\min_{b \neq x}\left\{Z_{x,\hat{a}_t}(t), \min_{b \neq \hat{a}_t}Z_{x,b}(t)\right\}$$ and 
$Z_{x,\hat{a}_t}(t)$ has the lowest value that $Z_{a,b}(t)$ can take (see \eqref{Zbat}). Equality $(c)$ follows as $Z_{x,\hat{a}_t}(t)$ has the same value given in \eqref{Zbat} for all $x$. The last equality $(e)$ follows from \eqref{Zab_vs_Zba}.

\end{proof}

	 \begin{proof}[Proof of Theorem~\ref{thm:correctness_identityless}]
	 	     	Let $a^*$ denote the estimated mode. For $b \in \K$, let  
	 	$$\mathit{E}_b^t = \{\omega: \tau_\delta =t, a^* = b\}$$ 
	 	be the set of sample paths for which the stopping time of \nialgo\ is $t$ and the mode estimated is $b$. Note that 
	 	$$\mathit{E}_b^{t_1}\cap\mathit{E}_b^{t_2} = \phi \text{ for all } t_1 \neq t_2.$$
	 	The probability of error in estimating the mode
	 	\begin{align*}
	 		\pP(a^* \neq 1) & = \sum_{b \in \mathcal{K}\setminus\{1\}} \sum_{t=0}^{\infty} \sum_{\omega \in \mathit{E}_b^t} \mathcal{L}_{\boldsymbol{p}}(\omega) \\
	 		& \leq  \sum_{b \in \mathcal{K}\setminus\{1\}} \sum_{t=0}^{\infty} \sum_{\omega \in \mathit{E}_b^t} \max_{p'_1 \geq p'_b}\mathcal{L}_{\boldsymbol{p'}}(\omega) \\
	 		& = \sum_{b \in \mathcal{K}\setminus\{1\}} \sum_{t=0}^{\infty} \sum_{\omega \in \mathit{E}_b^t}   \frac{\max_{p'_1 \geq p'_b}\mathcal{L}_{\boldsymbol{p'}}(\omega)}{\eE_{\bp' \sim \P}[\mathcal{L}_{\boldsymbol{p}'}(\omega)]} \eE_{\bp' \sim \P}[\mathcal{L}_{\boldsymbol{p}'}(\omega)] \\
	 		& \overset{(a_1)}{=} \sum_{b \in \mathcal{K}\setminus\{1\}} \sum_{t=0}^{\infty} \sum_{\omega \in \mathit{E}_b^t} \exp(-Z_{b,1}(t)) \eE_{\bp' \sim \P}[\mathcal{L}_{\boldsymbol{p}'}(\omega)] \\
	 		& \overset{(a_2)}{\leq} \sum_{b \in \mathcal{K}\setminus\{1\}} \exp{-\beta(t,\delta)} \underbrace{\sum_{t=0}^{\infty} \sum_{\omega \in \mathit{E}_b^t}  \eE_{\bp' \sim \P}[\mathcal{L}_{\boldsymbol{p}'}(\omega)] }_{(p*)} \\
	 		& \overset{(a_3)}{\leq} \delta.
	 	\end{align*}
	 	Equality $(a_1)$ follows from the definition of $Z_{a,b}(t)$. Inequality $(a_2)$ follows as $Z_{b,1}(t)$ will have crossed the threshold $\beta(t,\delta)$ at the stopping time $t$ when mode is $b$. Inequality $(a_3)$ follows by taking $\beta(t,\delta) = \log\left(\frac{K-1}{\delta}\right)$ as $(p*)$ is $\pP( a^* = b)$ under an alternate probability distribution.
	 \end{proof}


\begin{proof}[Proof of Theorem~\ref{thm:optimality_identityless}]
\textit{proof of~\eqref{eq:asymp_op_1_iless}}:\\
Let $\mathcal{E}$ be the event
$$\mathcal{E} = \{ \forall i \in \mathcal{K}, \hat{p}_i(t) \xrightarrow[t \to \infty]{} p_i \}.$$
By the law of Large numbers the event $\mathcal{E}$ is of probability 1. 
For the Dirichlet prior, 
\begin{align}
	Z(t) & = \log\frac{B(N_1(t)+ 1, N_2(t)+1, \ldots , N_K(t)+1)}{B(1,1,\ldots,1)} \\ 
	& \hspace{3cm} - \sum_{i \in \K\setminus\{\hat{a}_t,\hat{b}_t\}}N_i(t)\log(\hat{p}_i(t)) -
	(N_{\hat{a}_t}(t) + N_{\hat{b}_t}(t))\log\left(\frac{\hat{p}_{\hat{a}_t}(t) + \hat{p}_{\hat{b}_t}(t)}{2}\right) \nonumber \\
	& = \log\frac{\prod_{j=1}^{K}\Gamma(N_j(t)+1)\Gamma(K)}{\Gamma(t+K)} \nonumber \\ 
	& \hspace{3cm} - \sum_{i \in \K\setminus\{\hat{a}_t,\hat{b}_t\}}N_i(t)\log(\hat{p}_i(t)) -
	(N_{\hat{a}_t}(t) + N_{\hat{b}_t}(t))\log\left(\frac{\hat{p}_{\hat{a}_t}(t) + \hat{p}_{\hat{b}_t}(t)}{2}\right) \nonumber \\
	& = \log((K-1)!) + \sum_{j=1}^{K}\log(N_j(t)!) - \log((t+K-1)!) \nonumber \\
	& \hspace{3cm}- \sum_{i \in \K\setminus\{\hat{a}_t,\hat{b}_t\}}N_i(t)\log(\hat{p}_i(t)) -
	(N_{\hat{a}_t}(t) + N_{\hat{b}_t}(t))\log\left(\frac{\hat{p}_{\hat{a}_t}(t) + \hat{p}_{\hat{b}_t}(t)}{2}\right). \label{NEW_Zabt_RHS}
\end{align}
By Stirling's approximation, 
\begin{align*}
	N_j(t)! & \sim \sqrt{2\pi N_j(t)}\left(\frac{N_j(t)}{e}\right)^{N_j(t)}\\
	(t+K-1)! & \sim \sqrt{2\pi (t+K-1)}\left(\frac{t+K-1}{e}\right)^{t+K-1}
\end{align*}
as $t \to \infty.$ Using Stirling's approximation, the RHS of \eqref{NEW_Zabt_RHS} becomes 
\begin{align}
	& \log((K-1)!) + \sum_{j=1}^{K}\log(N_j(t)!) - \log((t+K-1)!) \nonumber \\
	& \hspace{1cm}- \sum_{i \in \K\setminus\{\hat{a}_t,\hat{b}_t\}}N_i(t)\log(\hat{p}_i(t)) -
	(N_{\hat{a}_t}(t) + N_{\hat{b}_t}(t))\log\left(\frac{\hat{p}_{\hat{a}_t}(t) + \hat{p}_{\hat{b}_t}(t)}{2}\right) \nonumber \\
	& = \log((K-1)!) + \sum_{j=1}^{K}\log\left(\sqrt{2\pi N_j(t)}\right) + \sum_{j=1}^{K}N_j(t)\log(N_j(t)) - t \nonumber \\ 
	& \hspace{1cm} -\log\left(\sqrt{2\pi (t + K-1)}\right) - (t + K -1)\log(t + K -1) + t + (K-1) \nonumber \\
	& \hspace{2cm} - \sum_{i \in \K\setminus\{\hat{a}_t,\hat{b}_t\}}N_i(t)\log(\hat{p}_i(t)) -
	(N_{\hat{a}_t}(t) + N_{\hat{b}_t}(t))\log\left(\frac{\hat{p}_{\hat{a}_t}(t) + \hat{p}_{\hat{b}_t}(t)}{2}\right) \nonumber \\
	& = \sum_{j=1}^{K}N_j(t)\log\left(\frac{N_j(t)}{t+K-1}\right)  - \sum_{i \in \K\setminus\{\hat{a}_t,\hat{b}_t\}}N_i(t)\log(\hat{p}_i(t)) -
	(N_{\hat{a}_t}(t) + N_{\hat{b}_t}(t))\log\left(\frac{\hat{p}_{\hat{a}_t}(t) + \hat{p}_{\hat{b}_t}(t)}{2}\right) \nonumber \\
	& \hspace{1cm} +  \log((K-1)!) + \sum_{j=1}^{K}\log\left(\sqrt{2\pi N_j(t)}\right)
	  -\log\left(\sqrt{2\pi (t + K-1)}\right) - ( K -1)\log(t + K -1) + (K-1) \label{Zabt_stirling}. 
\end{align}
On $\mathcal{E}$, as $t \to \infty$, 
\begin{align}
	\frac{N_j(t)}{t+K-1} \to \hat{p}_j(t)  \text{ and } \hat{p}_j(t) \to p_j. 
\end{align}
So, as $t \to \infty$, from \eqref{NEW_Zabt_RHS} and \eqref{Zabt_stirling}
\begin{align*}
	\frac{Z(t)}{t} \to \sum_{j=1}^{K}p_j\log\left(p_j\right)  - \sum_{i \in \K\setminus\{1,2\}}p_j\log(p_i) -(p_1 + p_2)\log\left(\frac{p_1 + p_2}{2}\right). 
\end{align*}

Simplifying the limit above we get,
\begin{align}
	& \sum_{j=1}^{K}p_j\log\left(p_i\right)  - \sum_{i \in \K\setminus\{1,2\}}p_j\log(p_i) -(p_1 + p_2)\log\left(\frac{p_1 + p_2}{2}\right) \nonumber \\
	& = p_1\log(p_1) + p_2\log(p_2) -(p_1 + p_2)\log\left(\frac{p_1 + p_2}{2}\right) \nonumber \\
	& =  (p_1 + p_2)\frac{p_1}{(p_1 + p_2)}\log\left(\frac{\frac{p_1}{(p_1 + p_2)}}{2}\right)  +  (p_1 + p_2)\frac{p_2}{(p_1 + p_2)}\log\left(\frac{\frac{p_2}{(p_1 + p_2)}}{2}\right) \nonumber \\
	& = (p_1 + p_2)kl\left(\frac{p_1}{p_1 + p_2}, \frac{1}{2}\right). \nonumber 
\end{align}
where $kl(x, y) = x\log(\frac{x}{y}) + (1-x)\log(\frac{1-x}{1-y})$ is the KL divergence of Bernoulli distribution.
So, on $\mathcal{E}$ as $t \to \infty$, 
\begin{align*}
	\frac{Z(t)}{t} \to (p_1 + p_2) kl\left(\frac{p_1}{p_1 + p_2}, \frac{1}{2}\right).
\end{align*}
So, there exists a time $t_0$ such that for $t>t_0$, and some $\epsilon >0$ ,
\begin{equation}
	Z(t) > \frac{t}{1+\epsilon}(p_1 + p_2) kl\left(\frac{p_1}{p_1 + p_2}, \frac{1}{2}\right)
\end{equation}

Let $g^*(\boldsymbol{p}) = (p_{1} + p_{2})d\left(\frac{p_{1}}{p_{1} + p_{2}}, \frac{1}{2}\right).$
Consequently, 
\begin{align*}
	\tau_\delta & = \inf\{ t \in \nN: Z(t) \geq \beta(t, \delta) \} \nonumber \\
	& \leq t_0 \vee \inf\left\{ t \in \nN: t(1 + \epsilon)^{-1}g^*(\boldsymbol{p}) \geq \log\left( \frac{(K-1)}{\delta} \right) \right\} \\
	& \leq t_0 \vee \log\left(\frac{K-1}{\delta}\right)(1+\epsilon)(g^*(\boldsymbol{p}))^{-1}
\end{align*}
Thus $\tau_\delta$ is finite on $\mathcal{E}$ for every $\delta \in (0,1)$ and by letting $\epsilon$ go to zero we get
\begin{align*}
	\lim\sup_{\delta \to 0} \frac{\tau_\delta}{\log(\frac{1}{\delta})} \leq (g^*(\boldsymbol{p}))^{-1}.
\end{align*} 
%


\textit{proof of~\eqref{eq:asymp_op_2_iless}}: \\
Let $\zeta>0$. Let $\xi$ be some value less than $\frac{(p_1-p_2)}{4}$. We define $I_{\xi} = [p_1-\xi, p_1+ \xi] \times [p_2 - \xi, p_2+ \xi] \times \ldots \times [p_k-\xi, p_k + \xi].$
Let $T\in \nN$. The event $\mathcal{E}_T$ is defined as 
\begin{equation*}
	\mathcal{E}_T = \cap_{t = T^{\zeta}}^{T}(\boldsymbol{\hat{p}}(t) \in I_{\xi}).
\end{equation*}
From \eqref{NEW_Zabt_RHS}, for the Dirichlet prior, 
\begin{align*}
	Z(t) & = \log((K-1)!) + \sum_{j=1}^{K}\log(N_j(t)!) - \log((t+K-1)!) \nonumber \\
	& \hspace{3cm}- \sum_{i \in \K\setminus\{\hat{a}_t, \hat{b}_t\}}N_i(t)\log(\hat{p}_i(t)) -(N_{\hat{a}_t}(t) + N_{\hat{b}_t}(t))\log\left(\frac{\hat{p}_{\hat{a}_t}(t) + \hat{p}_{\hat{b}_t}(t)}{2}\right).
\end{align*}
By Stirling's approximation, 
\begin{align*}
	N_j(t)! & \sim \sqrt{2\pi N_j(t)}\left(\frac{N_j(t)}{e}\right)^{N_j(t)}\\
	(t+K-1)! & \sim \sqrt{2\pi (t+K-1)}\left(\frac{t+K-1}{e}\right)^{t+K-1}
\end{align*}
as $t \to \infty.$ On $\mathcal{E}_T$, for $t > T^{\zeta}$, $t(p_j-\xi) \leq N_j(t) \leq t(p_j+\xi)$. So, on $\mathcal{E}_T$ there exists a time $T_1>T^{\zeta}$ and some $\lambda >0$ such that for
$t > T_1$,  
\begin{align*}
	N_j(t)!  & \geq   \frac{1}{(1+\lambda)}\sqrt{2\pi N_j(t)}\left(\frac{N_j(t)}{e}\right)^{N_j(t)} \\
	(t+K-1)! & \leq \frac{1}{(1-\lambda)}\sqrt{2\pi (t+K-1)}\left(\frac{t+K-1}{e}\right)^{t+K-1}.
\end{align*}
Using Stirling's approximation in the RHS of above equation of $Z(t)$, on $\mathcal{E}_T$ for $t > T_1$, we get 
\begin{align*}
	& Z(t) \\
	& \geq \log((K-1)!) + \sum_{j=1}^{K}\log\left(\sqrt{2\pi N_j(t)}\right) + \sum_{j=1}^{K}N_j(t)\log(N_j(t)) - t -\log(1+\lambda)\nonumber \\ 
	& \hspace{1cm} -\log\left(\sqrt{2\pi (t + K-1)}\right) - (t + K -1)\log(t + K -1) + t + (K-1) + \log(1-\lambda)\nonumber \\
	& \hspace{2cm} - \sum_{i \in \K\setminus\{1,2\}}N_i(t)\log(\hat{p}_i(t)) -(N_1(t) + N_2(t))\log\left(\frac{\hat{p}_1(t) + \hat{p}_2(t)}{2}\right) \\
	& = \sum_{j=1}^{K}N_j(t)\log\left(\frac{N_j(t)}{t+K-1}\right)  - \sum_{i \in \K\setminus\{a,b\}}N_i(t)\log(\hat{p}_i(t)) -(N_1(t) + N_2(t))\log\left(\frac{\hat{p}_1(t) + \hat{p}_2(t)}{2}\right) \nonumber \\
	& \hspace{1cm} +  \log((K-1)!)  + \sum_{j=1}^{K}\log\left(\sqrt{2\pi N_j(t)}\right) \nonumber \\
	& \hspace{2cm} -\log\left(\sqrt{2\pi (t + K-1)}\right) - ( K -1)\log(t + K -1) + (K-1) + \log\left(\frac{1-\lambda}{1 + \lambda}\right) \nonumber \\
	& \geq \sum_{j=1}^{K}t(p_j-\xi)\log\left(\frac{t(p_j-\xi)}{t+K-1}\right)  - \sum_{i \in \K\setminus\{1,2\}}t(p_j+\xi)\log((p_j+\xi))   \\
	& \hspace{1cm} -t(p_1 + p_2 + 2\xi)\log\left(\frac{p_1 + p_2 + 2\xi}{2}\right) +  \log((K-1)!)  + \sum_{j=1}^{K}\log\left(\sqrt{2\pi t(p_j-\xi)}\right) \nonumber \\
	& \hspace{2cm} -\log\left(\sqrt{2\pi (t + K-1)}\right) - ( K -1)\log(t + K -1) + (K-1) + \log\left(\frac{1-\lambda}{1 + \lambda}\right).
\end{align*}
%
%
On $\mathcal{E}_T$, there exists a time $T_2 >T_1$ such that for $t>T_2$, for some $\varepsilon>0$,
\begin{align*}
	& Z(t) \\
	& > \frac{t}{(1+\varepsilon)}\Bigg(\sum_{j=1}^{K}(p_j-\xi)\log\left(\frac{(p_j-\xi)}{t+K-1}\right)  - \sum_{i \in \K\setminus\{1,2\}}(p_j+\xi)\log((p_j+\xi))   \\
	& \hspace{1cm} -(p_1 + p_2 + 2\xi)\log\left(\frac{p_1 + p_2 + 2\xi}{2}\right) \Bigg) \\
	& = \frac{tf_{\xi}^*(\boldsymbol{p})}{(1+\varepsilon)},
\end{align*}
where $f_{\xi}^*(\boldsymbol{p}):= \sum_{j=1}^{K}(p_j-\xi)\log\left(\frac{(p_j-\xi)}{t+K-1}\right)  - \sum_{i \in \K\setminus\{1,2\}}(p_j+\xi)\log((p_j+\xi)) -(p_1 + p_2 + 2\xi)\log\left(\frac{p_1 + p_2 + 2\xi}{2}\right) .$
Let $T>T_2$. On $\mathcal{E}_T$,
\begin{align*}
	\min(\tau_\delta, T) & \leq T^{\zeta} + \sum_{t=T^{\zeta}}^{T}\mathbb{I}(\tau_\delta >t)\\
	& \leq T^{\zeta}  + \sum_{t=T^{\zeta}}^{T}\mathbb{I}(Z(t) \leq \beta(t, \delta))\\
	& \leq T^{\zeta} + \sum_{t=T^{\zeta}}^{T}\mathbb{I}\left(\frac{tf_{\xi}^*(\boldsymbol{p})}{(1+\varepsilon)} \leq \beta(T, \delta) \right) \\
	& \leq T^{\zeta} + \frac{\beta(T, \delta)(1+\varepsilon)}{f_{\xi}^*(\boldsymbol{p})}.
\end{align*}  
We define,
\begin{align*}
	T_0(\delta) = \inf\left\{T \in \nN: T^\zeta + \frac{\beta(T, \delta)(1+\varepsilon)}{f_{\xi}^*(\boldsymbol{p})} \leq T \right\},
\end{align*}  
such that for every $T \geq \max(T_0(\delta), T_2)$, $\mathcal{E}_T \subseteq ( \tau_\delta \leq T )$. So, for all $T \geq \max(T_0(\delta), T_2)$,
\begin{align}
	\pP(\tau_\delta > T) & \leq \pP(\mathcal{E}_T^c) \nonumber \\
	\text{ and } \eE[\tau_\delta] \leq T_0(\delta) + T_2 + \sum_{T=1}^{\infty}\pP(\mathcal{E}_T^c). \label{new_i:bound_on_E(tau_delta)}
\end{align} 
Let $\eta > 0$. We define a constant
$$C(\eta) = \inf\left\{ T \in \nN: T - T^\zeta \geq \frac{T}{1+\eta} \right\}$$
thus, 
\begin{align}
	T_0(\delta) & \leq C(\eta) + \inf\left\{ T \in \nN: \frac{\beta(T, \delta)(1+\varepsilon)}{f_{\xi}^*(\boldsymbol{p})} \leq \frac{T}{1+\eta} \right\} \nonumber \\
	& = C(\eta) + \frac{\beta(T, \delta)(1+\varepsilon)(1+\eta)}{f_{\xi}^*(\boldsymbol{p})}. \label{new_i:bound_on_T0(delta)}
\end{align}
\begin{lemma}\label{lemma_bound_on_prop_event}
	For some $\zeta>0$,
	\begin{equation}
		\pP(\mathcal{E}_T^c) \leq B\exp(-C(T^{\zeta}+1)) \label{new_i:bound on P(E^c)}
	\end{equation}
	where $B = \frac{2K}{1-\exp(-2\xi^2)}$ and $C = 2\xi^2$.
\end{lemma}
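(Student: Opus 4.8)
The plan is a routine concentration-plus-union-bound argument: decompose $\mathcal{E}_T^c$ into per-time, per-community deviation events, control each one by Hoeffding's inequality, and sum the resulting geometric series. First I would rewrite the complement. Since $\boldsymbol{\hat{p}}(t) \in I_\xi$ holds exactly when $|\hat{p}_i(t) - p_i| \le \xi$ for every $i \in \K$, de Morgan's law gives
\[
  \mathcal{E}_T^c = \bigcup_{t = T^\zeta}^{T}\ \bigcup_{i \in \K}\ \bigl\{\, |\hat{p}_i(t) - p_i| > \xi \,\bigr\},
\]
so that the union bound yields $\pP(\mathcal{E}_T^c) \le \sum_{t=T^\zeta}^{T} \sum_{i \in \K} \pP(|\hat{p}_i(t) - p_i| > \xi)$.

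Next I would bound each term using the i.i.d.\ structure of identityless sampling. For a fixed community~$i$, the empirical frequency $\hat{p}_i(t) = \frac{1}{t}\sum_{s=1}^t \indicator{x_s = i}$ is the sample mean of $t$ i.i.d.\ $\mathrm{Bernoulli}(p_i)$ indicators taking values in $[0,1]$, so Hoeffding's inequality gives the two-sided tail bound $\pP(|\hat{p}_i(t) - p_i| > \xi) \le 2\exp(-2t\xi^2)$, uniformly over $i \in \K$. Substituting this into the union bound and using $|\K| = K$ gives
\[
  \pP(\mathcal{E}_T^c) \le \sum_{t=T^\zeta}^{T} 2K \exp(-2\xi^2 t) \le 2K \sum_{t=T^\zeta}^{\infty} \exp(-2\xi^2 t).
\]

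Finally I would evaluate the geometric series with common ratio $\exp(-2\xi^2)$, which yields a bound of the claimed form $B\exp(-C(T^\zeta+1))$ with $C = 2\xi^2$ and $B = 2K/(1-\exp(-2\xi^2))$, the leading exponent being governed by the first index appearing in the union. There is no genuine obstacle in this lemma: the only point that must be invoked with care is that under identityless sampling the per-community indicators $\{\indicator{x_s = i}\}_s$ are genuinely i.i.d.\ and bounded, which is exactly what licenses the direct application of Hoeffding; the remaining steps are elementary bookkeeping in the geometric sum.
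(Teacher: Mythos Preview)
Your proposal is correct and follows essentially the same route as the paper: a union bound over times $t \in \{T^\zeta,\ldots,T\}$ and communities $i \in \K$, Hoeffding's inequality applied to the i.i.d.\ Bernoulli indicators $\indicator{x_s = i}$, and summation of the resulting geometric series. The paper keeps the finite sum $\sum_{t=T^\zeta}^{T}$ before bounding, whereas you extend it to $\sum_{t=T^\zeta}^{\infty}$; this difference is cosmetic and both yield the stated constants $B = 2K/(1-\exp(-2\xi^2))$ and $C = 2\xi^2$.
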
 
From \eqref{new_i:bound_on_E(tau_delta)}, \eqref{new_i:bound_on_T0(delta)} and \eqref{new_i:bound on P(E^c)},
\begin{align*}
	\eE[\tau_\delta]  \leq C(\eta) + \frac{\log\left(\frac{K-1}{\delta}\right)(1+\varepsilon)(1+\eta)}{f_{\xi}^*(\boldsymbol{p})} + T_2 + \sum_{T=1}^{\infty}B\exp(-C(T^{\zeta}+1)).
\end{align*}
The upper bound in the RHS of the above equation is finite only if $\sum_{T=1}^{\infty}B\exp(-C(T^{\zeta}+1))$ if finite. Thus, $T^{\zeta}$ is chosen such that $\sum_{T=1}^{\infty}B\exp(-C(T^{\zeta}+1)) < \infty$.
Taking the limit $$\lim_{\xi \to 0}f_{\xi}^*(\boldsymbol{p}) = g^*(\boldsymbol{p})$$ and letting $\varepsilon$ and $\eta$ go to zero we get
\begin{align}
	\lim\sup_{\delta \to 0} \frac{\eE[\tau_\delta]}{\log(\frac{1}{\delta})} \leq (g^*(\boldsymbol{p}))^{-1}.
\end{align}
\textbf{proof of Lemma \ref{lemma_bound_on_prop_event}}
\begin{align}
	\pP(\mathcal{E}_T^c) & \leq \sum_{t=T^{\zeta}}^{T}\pP(\boldsymbol{\hat{p}}(t) \notin I_\xi)\\
	& \leq \sum_{t=T^{\zeta}}^{T} \sum_{a=1}^{K} \left[ \pP(\hat{p}_a(t) < p_a -\xi ) + \pP(\hat{p}_a(t) > p_a +\xi) \right].
\end{align}
Let the random variable 
\begin{equation}
	Z_t^a = 
	\begin{cases}
		1 & \text{ if } X_t =a\\
		0 & \text{otherwise}.
	\end{cases}  
\end{equation}
Since $Z_t^a \leq 1$, it is sub-Gaussian with parameter $\frac{1}{2}$. We can write 
$\hat{p}_a(t) = \frac{1}{t}\sum_{j=1}^{t} Z_j^a$.
By applying the Hoeffding Bound (see~\citet{hoeffding}), 
\begin{equation}
	\pP(\hat{p}_a(t) - p_a > \xi ) \leq \exp(-2t\xi^2) \text{ and } \pP(\hat{p}_a(t) - p_a < -\xi ) \leq \exp(-2t\xi^2).
\end{equation} 
So,
\begin{align}
	\pP(\mathcal{E}_T^c) & \leq 2\sum_{t=T^{\zeta}}^{T} \sum_{a=1}^{K}\exp(-2t\xi^2)\\
	& = 2K \sum_{t=T^{\zeta}}^{T} \exp(-2t\xi^2)\\
	& = 2K \left( \frac{\exp(-2(T^{\zeta}+1)\xi^2) - \exp(-2(T+1)\xi^2)}{1 -\exp(-2\xi^2)}\right)\\
	& \leq 2K\frac{\exp(-2(T^{\zeta}+1)\xi^2)}{1 -\exp(-2\xi^2)}\\
	& = B\exp(-C(T^{\zeta}+1)).
\end{align}
where $B = \frac{2K}{1-\exp(-2\xi^2)}$ and $C = 2\xi^2$.
\end{proof}
\section{Identity-based sampling}
\label{appendix:identitybased}
\begin{proof}[Proof of Lemma~\ref{lemma:lb_identitybased}]
	This proof uses ideas similar to the proof of Theorem 4 in \citet{jain2021sequential}.\\
	Let $D = (d_1, d_2, \ldots d_K) \in \nN^K$ be an instance with community one as the largest community and $N =\sum_{j=1}^{K}d_j$. Let $D' = (d'_1, d'_2,\ldots d'_K)$ be an alternate instance in $\nN^K$ such that the mode is not equal to one and $d'_j \geq d_j$ for all $j \in \mathcal{K}$ and $N' = \sum_{j=1}^{K}d'_j > N$. Let $q_D(s,s')$ and $q_{D'}(s,s')$ denote probability of transition from state $s=(s_1,s_2,\ldots,s_K)$ to $s'=(s'_1, s'_2, \ldots, s'_K)$. So, $q_D(s,s)=\frac{\sum_{j=1}^{K}s_j}{N}$ and $q_D(s,s+e_j) = \frac{d_j - s_j}{N}$, where $e_j$ is a $K$-length unit vector with one in the $j^{th}$ position. Similarly, $q_{D'}(s,s)=\frac{\sum_{j=1}^{K}s_j}{N'}$ and $q_{D'}(s,s+e_j) = \frac{d'_j - s_j}{N'}$. Thus for states $s$ and $s'$ that are feasible under both $D$ and $D'$,
	\begin{equation}
		\log\left(\frac{q_{D}(s,s)}{q_{D'}(s,s)}\right) = \log\left(\frac{N'}{N}\right) \label{trans_same_state}
	\end{equation}
	and  
	\begin{align}
		\log\left(\frac{q_{D}(s,s+e_j)}{q_{D'}(s,s+e_j)}\right) & = \log\left(\frac{(N')(d_j-s_j)}{N(d'_j-s_j)}\right) \nonumber \\
		& \leq \log\left(\frac{N'}{N}\right) \label{trans_diff_state}.
	\end{align}
	Given a state evolution $\left(S(1), S(2), \ldots S(\tau_\delta)\right)$, from Appendix D, proof of Theorem 4 in \citet{jain2021sequential}, the log-likelihood ratio is 
	\begin{equation*}
		\log\frac{\pP_{D}\left(S(1), S(2), \ldots S(\tau_\delta)\right)}{\pP_{D'}\left(S(1), S(2), \ldots S(\tau_\delta)\right)} \leq \sum_{s, s'}N(s, s', \tau_\delta)\log\left(\frac{q_{D}(s,s')}{q_{D'}(s,s')}\right),
	\end{equation*}
	where $N(s, s', \tau_\delta)$ denotes the number of transitions from state $s$ to $s'$ within time $\tau_\delta$. Also, expected value with respect to $D$ of the log-likelihood ratio is
	\begin{align*}
		\eE_{D}\left[\log\frac{\pP_{D}\left(S(1), S(2), \ldots S(\tau_\delta)\right)}{\pP_{D'}\left(S(1), S(2), \ldots S(\tau_\delta)\right)}\right] & \overset{(a)}{\geq} D(Ber(\pP_D(\hat{a}_{\tau_\delta})=1)||Ber(\pP_{D'}(\hat{a}_{\tau_\delta})=1)) \\
		& \overset{(b)}{\geq}kl(\delta,1-\delta) \\
		& \overset{(c)}{\geq} \log\left(\frac{1}{2.4\delta}\right).
	\end{align*}
	In $(a)$, $Ber(x)$ denotes the Bernoulli distribution with parameter $x \in (0,1)$ and this inequality follows from the Data processing inequality(see \citet{cover2012}). The inequalities $(b)$ and $(c)$ follow from section 3.4 in \citet{shah2020sequential}. Therefore, from the above inequality we get 
	\begin{align*}
		\inf_{d'\in D'}\eE_{D}\left[\log\frac{\pP_{D}\left(S(1), S(2), \ldots S(\tau_\delta)\right)}{\pP_{D'}\left(S(1), S(2), \ldots S(\tau_\delta)\right)}\right] \geq \log\left(\frac{1}{2.4\delta}\right) \\
		\inf_{d'\in D'}\sum_{s, s'}\eE_{D}[N(s, s', \tau_\delta)]\log\left(\frac{q_{D}(s,s')}{q_{D'}(s,s')}\right) \geq\log\left(\frac{1}{2.4\delta}\right) \\
		\eE[\tau_\delta]\inf_{d'\in D'}\log\left(\frac{N'}{N}\right) \overset{(f)}{\geq}\log\left(\frac{1}{2.4\delta}\right).
	\end{align*}
	The inequality $(f)$ follows from \eqref{trans_same_state}, \eqref{trans_diff_state} and from the inequality $\sum_{s, s'} N(s, s', t) \leq t$. The above infimum
	is obtained at an alternate instance $D'$ where $d'_j = d_j$ for all $j \neq 2$ and $d_2 =d_1+1$. Thus,
	\begin{align*}
		\eE_D[\tau_\delta] & \geq \frac{\log\left(\frac{1}{2.4\delta}\right)}{\log\left(\frac{N-d_2+d_1+1}{N}\right)}.
	\end{align*}
\end{proof}
%
\begin{proof}[Proof of Lemma~\ref{lemma:comparison_of_LB}]
	Consider the lower bound from Lemma~\ref{lemma:lb_indentity-less} on the expected sample complexity under the identityless sampling model, given by 
	\begin{align}
		(p_1 + p_2)kl\left(\frac{p_1}{p_1+p_2}, \frac{1}{2} \right) & = p_1\log\left(\frac{2p_1}{p_1+p_2}\right) + p_2\log\left(\frac{2p_2}{p_1+p_2}\right) \nonumber \\
		& = (p_1 -p_2)\log\left(\frac{2p_1}{p_1+p_2}\right) + \underbrace{p_2\log\left(\frac{4p_1p_2}{(p_1+p_2)^2}\right)}_{(a)} \nonumber \\
		& \overset{(b)}{\leq} (p_1 - p_2)\log(2) \label{lb_identityless}. 
	\end{align} 
	Inequality $(b)$ follows by noting that the expression in $(a)$ is non-positive since $2\sqrt{p_1p_2} \le (p_1 + p_2)$. Now, consider the lower bound from Lemma~\ref{lemma:lb_identitybased} on the expected sample complexity under the identity-based sampling model, given by 
	\begin{align*}
		\log\left(\frac{N-d_2+d_1+1}{N}\right) & = \log\left(1 + p_1-p_2 + 1/N\right)\\
		& \geq \log(1 + p_1 -p_2)
	\end{align*}
	The function $\log(1 + p_1 -p_2)$ is between $\log(1)$ and $\log(2)$ and for $\alpha = 1-(p_1-p_2)$, as $\log$ is a concave function,
	\begin{align}
		\log\left(\frac{N-d_2+d_1+1}{N}\right) \geq \alpha\log(1) + (1-\alpha)\log(2) = (p_1 -p_2)\log(2) \label{lb_identity}.
	\end{align}
	The result of Lemma~\ref{lemma:comparison_of_LB} follows from \eqref{lb_identityless}, \eqref{lb_identity}. Moreover,
	\begin{align}
		(p_1 + p_2)kl\left(\frac{p_1}{p_1+p_2}, \frac{1}{2} \right) & = p_1\log\left(\frac{2p_1}{p_1+p_2}\right) + p_2\log\left(\frac{2p_2}{p_1+p_2}\right) \nonumber \\
		& \overset{(c)}{<} p_1\left(\frac{2p_1}{p_1+p_2}-1\right) + p_2\left(\frac{2p_2}{p_1+p_2}-1\right) \nonumber \\
		& = \frac{(p_1-p_2)^2}{p_1+p_2}. \label{up_on_lb_iless}
	\end{align}
	The inequality $(c)$ follows as $\log(x) < x-1$ for $x > -1$. Thus, from \eqref{lb_identity} and \eqref{up_on_lb_iless},
	\begin{align}
		\frac{\log\left(\frac{N-d_2+d_1+1}{N}\right)}{(p_1 + p_2 )\ kl\left(
			\frac{p_1}{p_1 + p_2 } , \frac{1}{2} \right)} > \frac{(p_1+p_2)\log(2)}{(p_1-p_2)} \label{eq:lb_ratio}
	\end{align}
\end{proof}
\begin{proof}[Proof of Lemma~\ref{lemma:sup_pf_ST_Yabt_wi}]
	The function $$f_t(\bd') = \left\{\sum_{j \in \mathcal{K}^{(t)}}\int_{-1}^{S_j(t)}
	\log(d'_j-v)dv -t\log\left(\sum_{i=1}^{K}d'_i\right)\right\}.$$
	The KKT conditions to find the supremum of $f_t(\bd')$ under the constraint $d'_a -d'_b \leq 0$ are listed as follows for primal optimal point $\boldsymbol{d}^*$ and dual optimal point $\lambda^*$
	\begin{align*}
		d^*_a & \leq d^*_b\\
		\lambda^* & \geq 0\\
		\lambda^* (d^*_a - d^*_b) & = 0\\
		\nabla f_t(\boldsymbol{d}^*) + \nabla\lambda^* (d^*_a - d^*_b) 	& =0.
	\end{align*}
	So, for $j \in \K \setminus \{a,b\}$,
	\begin{align*}
		\int_{-1}^{S_j(t)}\frac{1}{(d^*_j-y)}dy -\frac{t}{\left(\sum_{i=1}^{K}d^*_i\right)} & =0 \\
		\int_{d^*_j -S_j(t)}^{d^*_j+1}\frac{1}{x}dx -\frac{t}{\left(\sum_{i=1}^{K}d^*_i\right)} & =0\\
		\log\left(\frac{d^*_j+1}{d^*_j -S_j(t)}\right) -\frac{t}{\left(\sum_{i=1}^{K}d^*_i\right)} & =0\\ 
	\end{align*}
	(ii) and for $j=a,b$\\
	\begin{align*}
		\log\left(\frac{d^*_a+1}{d^*_a -S_a(t)}\right) -\frac{t}{\left(\sum_{i=1}^{K}d^*_i\right)} + \lambda & =0\\ 
		\log\left(\frac{d^*_b+1}{d^*_b -S_b(t)}\right) -\frac{t}{\left(\sum_{i=1}^{K}d^*_i\right)} - \lambda & =0. 
	\end{align*}
	From the first three KKT conditions we can see that either $\lambda^* = 0$ or $d^*_a - d^*_b = 0$. If $\lambda^* = 0$  then
	\begin{align*}
		\frac{d^*_j+1}{d^*_j -S_j(t)} & = e^{\frac{t}{\left(\sum_{i=1}^{K}d^*_i\right)}}\\
		e^{\frac{-t}{\left(\sum_{i=1}^{K}d^*_i\right)}}\left(d^*_j+1\right) & = d^*_j -S_j(t) \\
		d^*_j &= \frac{S_j(t) + e^{\frac{-t}{\left(\sum_{i=1}^{K}d^*_i\right)}} }{1-e^{\frac{-t}{\left(\sum_{i=1}^{K}d^*_i\right)}}} \text{ for all } j \in \mathcal{K}.
	\end{align*}
	Thus, when $S_a(t) > S_b(t)$, $d^*_a > d^*_b$. But this violates the constraint. So, when $S_a(t) \geq S_b(t)$,    
	taking $d^*_a = d^*_b$, 
	\\ (i) for $j \in \mathcal{K}^{(t)} \setminus \{a,b\}$
	\begin{align}
		d^*_j = \frac{S_j(t) + e^{\frac{-t}{\left(\sum_{i=1}^{K}d^*_i\right)}} }{1-e^{\frac{-t}{\left(\sum_{i=1}^{K}d^*_i\right)}}} \text{ for } j \in \mathcal{K} \setminus\{a,b\} \label{dj_beta}
	\end{align}
	(ii) and for $j = a,b$
	\begin{align}
		& \log\left(\frac{(d^*_a+1)^2}{(d^*_a -S_a(t))(d^*_a -S_b(t))}\right) -\frac{2t}{\left(\sum_{i=1}^{K}d^*_i\right)}   =0 \label{da_db} \\
		& e^{\frac{-2t}{\left(\sum_{i=1}^{K}d^*_i\right)}}(d^*_a+1)^2  = (d^*_a -S_a(t))(d^*_a -S_b(t)) \nonumber \\
		& d^*_a = d^*_b = \frac{ S_a(t) + S_b(t) + 2e^{\frac{-2t}{\left(\sum_{i=1}^{K}d^*_i\right)}} \pm r\left(e^{\frac{-t}{\left(\sum_{i=1}^{K}d^*_i\right)}}\right)}{2\left(1 - e^{\frac{-2t}{\left(\sum_{i=1}^{K}d^*_i\right)}}\right)} \label{da_db_beta}
	\end{align}
	where $r\left(e^{\frac{-t}{\left(\sum_{i=1}^{K}d'_i\right)}}\right)  = \sqrt{ (S_a(t) - S_b(t))^2 + 4e^{\frac{-2t}{\left(\sum_{i=1}^{K}d'_i\right)}}(1+S_a(t) +S_b(t)+ S_a(t)S_b(t))}$.\\
	\textit{Note: When $S_a(t) < S_b(t)$, by taking $\lambda^* = 0$, $d^*_j = \frac{S_j(t) + e^{\frac{-t}{\left(\sum_{i=1}^{K}d^*_i\right)}} }{1-e^{\frac{-t}{\left(\sum_{i=1}^{K}d^*_i\right)}}}$ for all $j \in \K$, the constraint $d^*_a \leq d^*_b$ is satisfied. Therefore, 
		\begin{equation}\label{unconstrained_sup}
			\text{when $S_a(t) < S_b(t)$,} \sup_{\boldsymbol{d}'\in \R_+^K \colon d'_a \leq d'_b} f_t(\bd')  = \sup_{\boldsymbol{d}'\in \R_+^K}f_t(\bd'), 
		\end{equation}
		the unconstrained supremum of $f_t(\bd')$.}
	
	Let $\gamma = e^{\frac{-t}{\left(\sum_{i=1}^{K}d'_i\right)}}$ and $\gamma_0 = e^{\frac{-t}{\left(\sum_{i=1}^{K}d^*_i\right)}}$. By substituting $\gamma$ in \eqref{dj_beta} and \eqref{da_db_beta} in place of $\gamma_0$, we get the expression of $d'_j$, $d'_a$ and $d'_b$ in terms of $\gamma$ as 
	\begin{align}
		d'_j & = \frac{S_j(t) + \gamma }{1-\gamma} \text{   for $j \neq a,b$ } \text{ and} \label{dj_2_gamma}\\
		d'_a & = d'_b = \frac{ S_a(t) + S_b(t) + 2\gamma^2 \pm r(\gamma)}{2(1 - \gamma^2)}, \label{da_db_2_gamma}
	\end{align} 
	where $r\left(\gamma\right)  = \sqrt{ (S_a(t) - S_b(t))^2 + 4\gamma^2(1+S_a(t) +S_b(t)+ S_a(t)S_b(t))}$. \\
	Therefore, instead of a $K$-dimensional optimization problem we have a one-dimensional optimization problem to find the optimal $\gamma_0$.
	
	\textit{Claim:} $d'_a  = d'_b = \frac{ S_a(t) + S_b(t) + 2\gamma^2 - r(\gamma)}{2(1 - \gamma^2)}$ is not a valid solution. 
	\begin{proof}
		From \eqref{da_db} we can see that the valid value of $d'_a$ is greater than $S_a(t)$ at the critical point. We compare $\frac{ S_a(t) + S_b(t) + 2\gamma^2 - r(\gamma)}{2(1 - \gamma^2)}$ to $S_a(t)$. For 
		\begin{align*}
			& \frac{ S_a(t) + S_b(t) + 2\gamma^2 - r(\gamma)}{2(1 - \gamma^2)}  \leq S_a(t) \\
			& S_a(t) + S_b(t) + 2\gamma^2 - r(\gamma)  \leq  2S_a(t)(1 - \gamma^2) \\
			& S_a(t) + S_b(t) + 2\gamma^2 -2S_a(t) + 2S_a(t)\gamma^2  \leq r(\gamma) \\
			& \left( \left(S_b(t) - S_a(t)\right) + 2\gamma^2(1 + S_a(t))\right)^2 \nonumber \\
			& \hspace{3cm} \leq (S_a(t) - S_b(t))^2 + 4\gamma^2(1+S_a(t) +S_b(t)+ S_a(t)S_b(t)) \\
			& 4\gamma^4(1 + S_a(t))^2  \nonumber \\
			& \hspace{1.5cm} \leq 4\gamma^2( 1+S_a(t) +S_b(t)+ S_a(t)S_b(t) + (S_a(t) - S_b(t))(1 + S_a(t)) ) \\
			& \gamma^2 -1 \leq 0 \\
			& \gamma \leq 1.
		\end{align*}
		From the definition of $\gamma$ it is between $0$ and $1$. From the above equation we can see that $\S_a(t) \geq $\\$\frac{ S_a(t) + S_b(t) + 2\gamma^2 - r(\gamma)}{2(1 - \gamma^2)}$ when $\gamma \leq 1$. So $\frac{ S_a(t) + S_b(t) + 2\gamma^2 - r(\gamma)}{2(1 - \gamma^2)}$ is not a valid solution. 
	\end{proof}
	Let $N^{(a,b)}(\gamma) = \sum_{i=1}^{K}d'_i$ be the sum of $d'_j$s written in terms of $\gamma$ given in \eqref{dj_2_gamma} and \eqref{da_db_2_gamma}.  
	\begin{equation}
		N^{(a,b)}(\gamma) =  \frac{ \sum_{j\in \mathcal{K}^{(t)}\setminus\{a,b\}}S_j(t) + \gamma(K^{(t)}-2) }{1-\gamma} + \frac{ S_a(t) + S_b(t) + 2\gamma^2 + r(\gamma)}{(1 - \gamma^2)} \label{N_gamma}
	\end{equation} 
	Recall that $\gamma = e^{\frac{-t}{\left(\sum_{i=1}^{K}d'_i\right)}} = e^{\frac{-t}{\left(N^{(a,b)}(\gamma)\right)}}$. Combining the expression of $N^{(a,b)}(\gamma)$ and the definition of $\gamma$, the value of $\gamma$ at the critical point is given by,
	\begin{equation}
		\gamma_0  = \exp\left( \frac{-t}{\frac{ \sum_{j\in \mathcal{K}^{(t)}\setminus\{a,b\}}S_j(t) + \gamma_0(K^{(t)}-2) }{1-\gamma_0} + \frac{ S_a(t) + S_b(t) + 2\gamma_0^2 + r(\gamma_0)}{(1 - \gamma_0^2)}}\right). \label{gamma_eq}
	\end{equation}
	From the expression of $\gamma_0$ we get 
	\begin{align*}
		&\log\left(\frac{1}{\gamma_0}\right)  = \frac{t}{\frac{ \sum_{j\in \mathcal{K}^{(t)}\setminus\{a,b\}}S_j(t) + \gamma_0(K^{(t)}-2) }{1-\gamma_0} + \frac{ S_a(t) + S_b(t) + 2\gamma_0^2 + r(\gamma_0)}{(1 - \gamma_0^2)}} \\
		&\log\left(\frac{1}{\gamma_0}\right)\Bigg( \frac{ \sum_{j\in \mathcal{K}^{(t)}\setminus\{a,b\}}S_j(t) + \gamma_0(K^{(t)}-2) }{1-\gamma_0} 
		 + \frac{ S_a(t) + S_b(t) + 2\gamma_0^2 + r(\gamma_0)}{(1 - \gamma_0^2)} \Bigg) -t = g(\gamma_0) =0.
	\end{align*}
	\begin{lemma} \label{lemma_g_gamma}
		$g_1(\gamma) = \log\left(\frac{1}{\gamma}\right)\left( \frac{ \sum_{j\in \mathcal{K}^{(t)}\setminus\{a,b\}}S_j(t) + \gamma(K^{(t)}-2) }{1-\gamma} + \frac{ S_a(t) + S_b(t) + 2\gamma^2 + r(\gamma)}{(1 - \gamma^2)} \right)$ is a decreasing function of $\gamma$.
	\end{lemma}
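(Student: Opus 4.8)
The plan is to write $g_1(\gamma) = \log(1/\gamma)\,N^{(a,b)}(\gamma)$ with $N^{(a,b)}(\gamma) = \sum_i d^{a,b}_i(\gamma)$, and exploit the explicit forms of the summands from \eqref{dj_2_gamma}--\eqref{da_db_2_gamma}. Pushing the factor $\log(1/\gamma)$ into the sum and using $d^{a,b}_a = d^{a,b}_b$ yields the decomposition
\[
g_1(\gamma) = \sum_{j \in \mathcal{K}^{(t)}\setminus\{a,b\}} h_{S_j}(\gamma) + h_{a,b}(\gamma), \qquad h_S(\gamma) := \log(1/\gamma)\,\frac{S+\gamma}{1-\gamma}, \quad h_{a,b}(\gamma) := 2\log(1/\gamma)\,d^{a,b}_a(\gamma).
\]
Since a finite sum of strictly decreasing functions is strictly decreasing, it suffices to show each $h_{S_j}$ and the merged-pair term $h_{a,b}$ is strictly decreasing on $(0,1)$. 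I work under $a,b \in \mathcal{K}^{(t)}$, so that $S_a, S_b$ and every $S_j$ appearing in the sum are integers at least $1$ (the number of such $j$ being exactly $K^{(t)}-2$, which is what reconciles the decomposition with the closed form in the statement). This lower bound, rather than mere positivity, is what the argument will genuinely need.

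For the single-community terms I would differentiate $h_S$ directly and reduce $h_S'(\gamma) < 0$ to the scalar inequality $-\log\gamma < \frac{(S+\gamma)(1-\gamma)}{\gamma(1+S)}$. Introducing $\phi(\gamma) := \frac{(S+\gamma)(1-\gamma)}{\gamma(1+S)} + \log\gamma$, one has $\phi(1) = 0$, and a short computation gives
\[
(1+S)\gamma^2\,\phi'(\gamma) = (1-\gamma)(\gamma - S),
\]
which is negative on $(0,1)$ whenever $S \ge 1$. Hence $\phi$ decreases to $\phi(1)=0$, so $\phi > 0$ on $(0,1)$, which is exactly $h_S'(\gamma) < 0$.

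The merged-pair term is the crux: the square root $r(\gamma)$ makes direct differentiation unpleasant. The key device is to reparametrize by $D := d^{a,b}_a(\gamma) = d^{a,b}_b(\gamma)$. From the defining relation \eqref{da_db} one has $\gamma^2 = \frac{(D-S_a)(D-S_b)}{(D+1)^2}$, and differentiating the logarithm of the right-hand side shows $\gamma$ is a strictly increasing bijection of $D$ from $(S_a,\infty)$ onto $(0,1)$; so $h_{a,b}$ is decreasing in $\gamma$ iff it is decreasing in $D$. Substituting gives $h_{a,b} = \psi(D)$ with
\[
\psi(D) := D\left[\,2\log(D+1) - \log(D-S_a) - \log(D-S_b)\,\right].
\]
Computing $\psi'$ and regrouping yields
\[
\psi'(D) = 2F(-1) - F(S_a) - F(S_b), \qquad F(s) := \log(D-s) + \frac{D}{D-s},
\]
where $F'(s) = s/(D-s)^2$, so $F$ is increasing on $(0,D)$. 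Because $S_a, S_b \ge 1$, it then suffices to prove $F(1) > F(-1)$, i.e. $\chi(D) := \log\frac{D-1}{D+1} + \frac{2D}{D^2-1} > 0$; I would establish this from $\chi(D) \to 0$ as $D \to \infty$ together with $\chi'(D) = -4/(D^2-1)^2 < 0$, which forces $\chi > 0$ for all $D > 1$. Then $F(S_a) + F(S_b) \ge 2F(1) > 2F(-1)$, so $\psi' < 0$, and composing with the increasing map $\gamma \mapsto D$ makes $h_{a,b}$ strictly decreasing.

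The main obstacle is precisely the merged-pair term: taming $r(\gamma)$ via the substitution $D = d^{a,b}_a(\gamma)$ is the decisive step, after which everything collapses to the two elementary one-variable inequalities $\phi > 0$ and $\chi > 0$. A secondary point that must be handled carefully is that the hypothesis $S_a, S_b \ge 1$ (equivalently $a,b \in \mathcal{K}^{(t)}$) is essential and not cosmetic: for $S_b = 0$ one has $F(0) < F(-1)$ for large $D$, so the clean bound $F(S_b) \ge F(1)$ fails and monotonicity of $h_{a,b}$ would require a separate treatment.
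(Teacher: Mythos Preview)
Your proof is correct and takes a genuinely different route from the paper's for the merged-pair term. Both arguments split $g_1$ into the ``other communities'' piece and the ``$a,b$'' piece; for the former, your per-community treatment via $\phi$ and the paper's aggregate treatment of $h_1$ are essentially the same calculation, both hinging on $S_j\geq 1$ for $j\in\mathcal{K}^{(t)}$. The real divergence is in the $a,b$ piece. The paper differentiates $h_2(\gamma)=\log(1/\gamma)\frac{S_a+S_b+2\gamma^2+r(\gamma)}{1-\gamma^2}$ directly in $\gamma$, then uses the crude bounds $r(\gamma)>\gamma(S_a+S_b+2)$ and $r'(\gamma)\leq 2(S_a+S_b)$ together with the auxiliary inequality $2\gamma\log(1/\gamma)/(1-\gamma^2)\leq 1$ (their Lemma~\ref{lemma_diff}) to force $h_2'<0$. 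Your reparametrization by $D=d^{a,b}_a(\gamma)$ is the cleaner device: it eliminates the square root $r(\gamma)$ entirely, turning the problem into the monotonicity of $\psi(D)=D[2\log(D+1)-\log(D-S_a)-\log(D-S_b)]$, whose derivative factors neatly through $F(s)=\log(D-s)+D/(D-s)$. The cost is that you must separately verify $\gamma\mapsto D$ is increasing, but that is easy. Your approach is shorter, avoids the ad hoc bounds on $r$ and $r'$, and makes the dependence on the hypothesis $S_a,S_b\geq 1$ completely transparent (the paper's bound $r'(\gamma)\leq 2(S_a+S_b)$ also implicitly needs this, though it is not flagged there).
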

	\begin{proof}
		Let $h_1(\gamma) = \log\left(\frac{1}{\gamma}\right)\frac{ \sum_{j\in \mathcal{K}^{(t)}\setminus\{a,b\}}S_j(t) + \gamma(K^{(t)}-2) }{1-\gamma}$. 
		The derivative of $h_1(\gamma)$ with respect to $\gamma$ is 
		\begin{align*}
			h'_1(\gamma)  & = \log\left(\frac{1}{\gamma}\right)\frac{ \sum_{j\in \mathcal{K}^{(t)}\setminus\{a,b\}}S_j(t) + \gamma(K^{(t)}-2) }{(1-\gamma)^2} \nonumber \\ 
			& \hspace{1.5cm} + \log\left(\frac{1}{\gamma}\right)\frac{(K^{(t)}-2)}{1-\gamma} - \frac{ \sum_{j\in \mathcal{K}^{(t)}\setminus\{a,b\}}S_j(t) + \gamma(K^{(t)}-2) }{\gamma(1-\gamma)} \\
			h'_1(\gamma)(1-\gamma) & =  \log\left(\frac{1}{\gamma}\right)\frac{ \sum_{j\in \mathcal{K}^{(t)}\setminus\{a,b\}}S_j(t) + \gamma(K^{(t)}-2) }{1-\gamma} \nonumber \\ & \hspace{1.5cm} + \log\left(\frac{1}{\gamma}\right)(K^{(t)}-2) - \frac{ \sum_{j\in \mathcal{K}^{(t)}\setminus\{a,b\}}S_j(t) + \gamma(K^{(t)}-2) }{\gamma}\\
			h'_1(\gamma)(1-\gamma) &  = \sum_{j\in \mathcal{K}^{(t)}\setminus\{a,b\}}S_j(t)\left(\frac{\log\left(\frac{1}{\gamma}\right)}{1-\gamma} -\frac{1}{\gamma}\right) +  (K^{(t)}-2)\left(\frac{\log\left(\frac{1}{\gamma}\right)}{1-\gamma} - 1\right).   
		\end{align*}
		For $h_1(\gamma)$ to be a decreasing function $\gamma$, we want 
		\begin{align*}
			& \sum_{j\in \mathcal{K}^{(t)}\setminus\{a,b\}}S_j(t)\left(\frac{\log\left(\frac{1}{\gamma}\right)}{1-\gamma} -\frac{1}{\gamma}\right) +  (K^{(t)}-2)\left(\frac{\log\left(\frac{1}{\gamma}\right)}{1-\gamma} - 1\right)  \leq 0 \\
			& (K^{(t)}-2)  \leq \sum_{j\in \mathcal{K}^{(t)}\setminus\{a,b\}}S_j(t)\underbrace{\frac{\left(\frac{1}{\gamma} - \frac{\log\left(\frac{1}{\gamma}\right)}{1-\gamma} \right)}{\left(\frac{\log\left(\frac{1}{\gamma}\right)}{1-\gamma} - 1\right)}}_{\nu_\gamma}.
		\end{align*}
		By comparing the ratio $\nu_\gamma$ to one we get 
		\begin{align*}
			& \frac{\left(\frac{1}{\gamma} - \frac{\log\left(\frac{1}{\gamma}\right)}{1-\gamma} \right)}{\left(\frac{\log\left(\frac{1}{\gamma}\right)}{1-\gamma} - 1\right)} \geq  1 \\
			& \frac{1}{\gamma} - \frac{\log\left(\frac{1}{\gamma}\right)}{1-\gamma} \geq \frac{\log\left(\frac{1}{\gamma}\right)}{1-\gamma} - 1 \\
			& \frac{1 + \gamma}{\gamma} \geq 2\frac{\log\left(\frac{1}{\gamma}\right)}{1-\gamma} \\
			& 2\gamma\frac{\log\left(\frac{1}{\gamma}\right)}{1-\gamma^2} \leq 1
		\end{align*}
		So $2\gamma\frac{\log\left(\frac{1}{\gamma}\right)}{1-\gamma^2}$ should be less than one for $\nu_\gamma$ to be geater than one.
		\begin{lemma}\label{lemma_diff}
			The ratio $2\gamma\frac{\log\left(\frac{1}{\gamma}\right)}{1-\gamma^2}$ is less than or equal to 1. 
		\end{lemma}
		\begin{proof}
			Let  $h_0(\gamma) = 1 - \gamma^2 - 2\gamma\log(\frac{1}{\gamma})$. The function
			\begin{equation*}
				h_0(\gamma)  
				\begin{cases}
					= 0 & \text{ at } \gamma =1\\
					\to 1 & \text{as} \gamma \to 0.
				\end{cases}
			\end{equation*}
			The derivative of $h_0(\gamma)$ is 
			\begin{align*}
				\frac{dh_0(\gamma)}{d\gamma} & = -2\gamma - 2\log\left(\frac{1}{\gamma}\right) +2 \\
				& = 2\left( 1 - \gamma - \log\left(\frac{1}{\gamma}\right) \right) \\
				&  = 2( 1 -\gamma + \log(\gamma)) \\
				& \leq 0  \text{ as } \log(\gamma) \leq \gamma -1 \text{ for } \gamma >0.		
			\end{align*}
			Thus $1-\gamma^2 >  2\gamma\log(\frac{1}{\gamma})$.
		\end{proof}
		From the above Lemma we can conclude that $\nu_\gamma \geq 1$. Therefore, $h_1(\gamma)$ is a decreasing function $\gamma$ if $ (K^{(t)}-2)  \leq \sum_{j\in \mathcal{K}^{(t)}\setminus\{a,b\}}S_j(t)$. This is true since $\mathcal{K}^{(t)}$ is the set of integers in $\mathcal{K}$ for which $S_j(t)>0$ and $K^{(t)}$ is the number of communities for which $S_j(t)>0$.\\
		Let $h_2(\gamma) = \log\left(\frac{1}{\gamma}\right)\frac{ S_a(t) + S_b(t) + 2\gamma^2 + r(\gamma)}{(1 - \gamma^2)}$. The derivative of $h_2(\gamma)$ is 
		\begin{align*}
			h'_2(\gamma) & = \frac{\log\left(\frac{1}{\gamma}\right)}{1 - \gamma^2}(4\gamma + r'(\gamma)) + \frac{2\gamma\log\left(\frac{1}{\gamma}\right)}{(1 - \gamma^2)^2}(S_a(t) + S_b(t) + 2\gamma^2 + r(\gamma)) \\
			& \hspace{4cm} -\frac{1}{\gamma(1-\gamma^2)}(S_a(t) + S_b(t) + 2\gamma^2 + r(\gamma)) \\
			h'_2(\gamma)(1 - \gamma^2) & = \underbrace{\frac{4\gamma \log\left(\frac{1}{\gamma}\right)}{1-\gamma^2} -2\gamma}_{pt_1} + \underbrace{\left(\frac{2\gamma\log\left(\frac{1}{\gamma}\right)}{1-\gamma^2} -\frac{1}{\gamma}\right)(S_a(t)+S_b(t))}_{pt_2} \nonumber \\
			& \hspace{1cm} + \underbrace{\left(\frac{2\gamma\log\left(\frac{1}{\gamma}\right)}{1-\gamma^2} -\frac{1}{\gamma}\right)r(\gamma)}_{pt_3} + \underbrace{\log\left(\frac{1}{\gamma}\right)r'(\gamma)}_{pt_4}.
		\end{align*}
		By multiplying the term $(S_a(t) -S_b(t))^2$ by $\gamma^2$ in $r(\gamma)$ we get
		\begin{align}
			r(\gamma)  = \sqrt{ (S_a(t) - S_b(t))^2 + 4\gamma^2(1+S_a(t) +S_b(t)+ S_a(t)S_b(t))}& \nonumber \\
			r(\gamma)  >\sqrt{ (S_a(t) - S_b(t))^2\gamma^2 + 4\gamma^2(1+S_a(t) +S_b(t)+ S_a(t)S_b(t))} & \nonumber \\
			r(\gamma) > \gamma(S_a(t) + S_b(t) + 2) \text{ as the value of $\gamma$ is between $0$ and $1$.} \label{r_gamma_lb} &
		\end{align}
		The derivative of $r(\gamma)$ is
		\begin{align*}
			r'(\gamma) & = \frac{4\gamma(1+S_a(t)+S_b(t) + S_a(t)S_b(t))}{\sqrt{ (S_a(t) - S_b(t))^2 + 4\gamma^2(1+S_a(t) +S_b(t)+ S_a(t)S_b(t))}} \\
			& \leq \frac{4\gamma(S_a(t) + S_b(t))\frac{(1+S_a(t)+S_b(t) + S_a(t)S_b(t))}{(S_a(t) + S_b(t))}}{\sqrt{ ( 4\gamma^2(1+S_a(t) +S_b(t)+ S_a(t)S_b(t))}} \\
			& \leq 2(S_a(t) + S_b(t))\frac{\sqrt{1+S_a(t) +S_b(t)+ S_a(t)S_b(t))}}{(S_a(t) + S_b(t))} \\
			& \leq 2(S_a(t) + S_b(t))
		\end{align*}
		From Lemma \ref{lemma_diff}, we know that $2\gamma\frac{\log\left(\frac{1}{\gamma}\right)}{1-\gamma^2} \leq 1 \leq \frac{1}{\gamma}$. So, using the above bounds on $r(\gamma)$ and $r'(\gamma)$ in the expression of $h'_2(\gamma)(1 - \gamma^2)$ we get
		\begin{align*}
			& h'_2(\gamma)(1 - \gamma^2) \\
			& \leq  \frac{4\gamma \log\left(\frac{1}{\gamma}\right)}{1-\gamma^2} -2\gamma + \left(\frac{2\gamma\log\left(\frac{1}{\gamma}\right)}{1-\gamma^2} -\frac{1}{\gamma}\right)(S_a(t)+S_b(t)) \\
			& \hspace{1cm} + \left(\frac{2\gamma\log\left(\frac{1}{\gamma}\right)}{1-\gamma^2} -\frac{1}{\gamma}\right)(S_a(t)+S_b(t) +2)\gamma  + 2\log\left(\frac{1}{\gamma}\right)(S_a(t) + S_b(t)) \\
			& = \frac{4\gamma(1 +\gamma) \log\left(\frac{1}{\gamma}\right)}{1-\gamma^2} -2(1+\gamma) + (S_a(t) + S_b(t))\left(\frac{2\gamma\log\left(\frac{1}{\gamma}\right)}{1-\gamma^2} -\frac{1}{\gamma}\right)(1 +\gamma) \\ 
			& \hspace{3cm} + 2\log\left(\frac{1}{\gamma}\right)(S_a(t) + S_b(t)) \\
			& =  2(1 + \gamma)\left(\frac{2\gamma\log\left(\frac{1}{\gamma}\right)}{1-\gamma^2} - 1 \right)  + (S_a(t)+ S_b(t))\frac{1+\gamma}{\gamma}\left(\frac{2\gamma\log\left(\frac{1}{\gamma}\right)}{1-\gamma^2} - 1\right)
		\end{align*}
		From Lemma \ref{lemma_diff}, $\frac{2\gamma\log\left(\frac{1}{\gamma}\right)}{1-\gamma^2} \leq 1  $. So the upper bound on $h'_2(\gamma)$ is negative. Thus we can conclude that $h_2(\gamma)$ is a decreasing function of $\gamma$. Since $h_1(\gamma)$ and $h_2(\gamma)$ are decreasing functions of $\gamma$, $g_1(\gamma) = h_1(\gamma) + h_2(\gamma)$ is also a decreasing function of $\gamma$. 
	\end{proof}

	Thus a unique value of $\gamma$ is obtained by equating $g_1(\gamma)$ to $t$. We get the optimal values of $\boldsymbol{d}'$ by substituting $\gamma_0$ in \eqref{dj_2_gamma} and \eqref{da_db_2_gamma}. We will look at the second order conditions to check whether the critical point is indeed a local maximum.
	In order to do so we calculate the following bordered Hessian matrix.
	Let $a = 1$, $b = 2$. 
	The second order derivatives of $f_t(\bd')$ are 
	\begin{align*}
		\frac{\partial^2 f_t(\bd') }{\partial d^{'2}_j} & = \frac{1}{d'_j +1} -\frac{1}{d'_j - S_j(t)} + \frac{t}{\left(\sum_{i=1}^{K}d'_i\right)^2} \\
		& = \frac{-(S_j(t)+1)}{(d'_j+1)(d'_j -S_j(t))} + \frac{t}{\left(\sum_{i=1}^{K}d'_i\right)^2}  \text{ for } j \in \mathcal{K} \\ 
		\frac{\partial^2 f_t(\bd') }{\partial d'_jd'_k} & = \frac{t}{\left(\sum_{i=1}^{K}d'_i\right)^2} \text{ for }  j \neq k \in \mathcal{K} 
	\end{align*}
	Under the constraint $h^{1,2}(\boldsymbol{d}') = d'_1 -d'_2 \leq 0$ the bordered Hessian matrix for the maximization of $f_t(\bd')$ is constructed as
	\begin{align*}
		H_{f_{\boldsymbol{d'}}(t)}	& = \begin{bmatrix}
			0 & \frac{\partial h^{1,2}(\boldsymbol{d}')}{\partial d'_1}  &  \frac{\partial h^{1,2}(\boldsymbol{d}')}{\partial d'_2} & \ldots & \frac{\partial h^{1,2}(\boldsymbol{d}')}{\partial d'_K} \\
			\frac{\partial h^{1,2}(\boldsymbol{d}')}{\partial d'_1} & \frac{\partial^2 f_t(\bd') }{\partial d^{'2}_1} & \frac{\partial^2 f_t(\bd') }{\partial d'_2d'_1} & \ldots & \frac{\partial^2 f_t(\bd') }{\partial d'_kd'_1} \\
			\frac{\partial h^{1,2}(\boldsymbol{d}')}{\partial d'_2} & \frac{\partial^2 f_t(\bd') }{\partial d'_1d'_2} & \frac{\partial^2 f_t(\bd') }{\partial d^{'2}_2} & \ldots & \frac{\partial^2 f_t(\bd') }{\partial d'_kd'_2} \\
			\vdots & \vdots & \vdots & \ddots & \vdots \\
			\frac{\partial h^{1,2}(\boldsymbol{d}')}{\partial d'_K} & \frac{\partial^2 f_t(\bd') }{\partial d'_1d'_k} & \frac{\partial^2 f_t(\bd') }{\partial d'_2d'_K} & \ldots  & \frac{\partial^2 f_t(\bd') }{\partial d^{'2}_k}
		\end{bmatrix} \\
		&  = \frac{t}{\left(\sum_{i=1}^{K}d'_i\right)^2} \begin{bmatrix}
			0 & -R'_0 & R'_0 & 0 & \ldots & 0 \\
			-R'_0 & 1 - R'_1 & 1 & 1 & \ldots & 1 \\ 
			R'_0 & 1 & 1 - R'_2 & 1&  \ldots & 1 \\
			\vdots & \vdots & \vdots & \ddots & \vdots & \vdots \\
			\vdots & \vdots & \vdots & \vdots & \ddots & \vdots \\
			0 & 1 & 1 & 1 & \ldots & 1 - R'_K
		\end{bmatrix} 
	\end{align*}
	where $ R'_0 = \frac{\left(\sum_{i=1}^{K}d'_i\right)^2}{t}$ and $R'_j = \frac{\frac{(S_j(t)+1)}{(d'_j+1)(d'_j -S_j(t))}}{\frac{t}{\left(\sum_{i=1}^{K}d'_i\right)^2}}$.
	\begin{lemma}\label{lemma_prin_minor}
		The last $K-1$ leading principal minors of the Bordered Hessain matrix alternate in sign and the sign of the $l^th$ principal minor is $(-1)^{(l+1)}$. 
	\end{lemma}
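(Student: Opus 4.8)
The plan is to compute every leading principal minor in closed form and read off its sign. First I would normalize: factor the positive scalar $t/(\sum_i d'_i)^2$ out of the bordered Hessian, which multiplies the order-$l$ minor by a positive constant and so cannot change any sign. In the normalized matrix the lower-right $K\times K$ block is $A:=\mathbf 1\mathbf 1^\top-D$ with $D=\operatorname{diag}(R'_1,\dots,R'_K)$ (all-ones minus diagonal), and the border vector is $b=R'_0(-e_1+e_2)$, since only the two coordinates $a=1,b=2$ appear in the (linear) constraint gradient. At the critical point every $d'_j>S_j(t)$, so $R'_j>0$ for all $j$ and $R'_0=(\sum_i d'_i)^2/t>0$; moreover the overall sign of the border is immaterial because $R'_0$ will enter the determinant squared.

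Next I would fix $l\in\{3,\dots,K+1\}$ and let $M_l$ be the order-$l$ leading principal submatrix, consisting of the border together with coordinates $1,\dots,l-1$; its lower block is $A_{l-1}=\mathbf 1\mathbf 1^\top-\operatorname{diag}(R'_1,\dots,R'_{l-1})$. Using the Schur-complement identity $\det M_l=-\det(A_{l-1})\,b^\top A_{l-1}^{-1}b$ together with the matrix-determinant lemma and Sherman--Morrison applied to the rank-one-plus-diagonal form $A_{l-1}=-\operatorname{diag}(R')+\mathbf 1\mathbf 1^\top$, the algebra collapses cleanly. Writing $x=1/R'_1$, $y=1/R'_2$ and $\Sigma_l=\sum_{3\le j\le l-1}1/R'_j$, one obtains $\det M_l=(-1)^{l+1}R_0'^2\big(\prod_{j=1}^{l-1}R'_j\big)\big[(x+y)(1-\Sigma_l)-4xy\big]$, so that $\operatorname{sign}(\det M_l)=(-1)^{l+1}$ holds exactly when $(x+y)(1-\Sigma_l)>4xy$, i.e. when $\tfrac{4}{R'_1+R'_2}+\Sigma_l<1$. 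Since $\Sigma_l$ is nondecreasing in $l$, all $K-1$ sign conditions follow from the single strongest one, the full-matrix inequality $\tfrac{4}{R'_1+R'_2}+\sum_{j=3}^K 1/R'_j<1$; equivalently, this is the statement that the Hessian of $f_t$ is negative definite on the tangent space $\{w:w_1=w_2\}$ of the active constraint, so one could instead invoke the standard bordered-Hessian (Debreu) test at this step.

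The main obstacle is verifying this one scalar inequality at the critical point, and here the explicit maximiser of Lemma~\ref{lemma:sup_pf_ST_Yabt_wi} must be substituted. For $j\neq a,b$ the stationarity relation $\log\frac{d'_j+1}{d'_j-S_j(t)}=t/N$ (with $N=N^{(a,b)}(\gamma_0)$ and $\log(1/\gamma_0)=t/N$) gives $1/R'_j=\frac{\gamma_0(1+S_j(t))}{(1-\gamma_0)^2}\cdot\frac{t}{N^2}$, while for $j=a,b$ one uses $d'_a=d'_b$ and $(d'_a-S_a(t))(d'_a-S_b(t))=\gamma_0^2(d'_a+1)^2$ to express $R'_a+R'_b$. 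Crucially, the active constraint enters precisely through the replacement of $1/R'_1+1/R'_2$ by the smaller quantity $4/(R'_1+R'_2)$ (AM--HM); without the constraint the inequality can genuinely fail, so the argument must use $d'_a=d'_b$. I expect the resulting expression to reduce, after clearing denominators and using $N=N^{(a,b)}(\gamma_0)$, to a $\gamma_0$-inequality that is settled by the bound $2\gamma\log(1/\gamma)\le 1-\gamma^2$ of Lemma~\ref{lemma_diff} (and the convexity estimate $\log\gamma\le\gamma-1$ already used in Lemma~\ref{lemma_g_gamma}), with strictness guaranteed because $t>\sum_j S_j(t)+K^{(t)}$ forces $\gamma_0<1$. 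With this strict inequality in hand, the minors are nonzero with the claimed alternating signs, confirming that the critical point is a genuine local maximum and closing the second-order verification.
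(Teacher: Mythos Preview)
Your proposal is correct and follows essentially the same route as the paper: both derive the closed-form leading principal minor $(-1)^{l+1}R_0'^{2}(R'_1+R'_2)R'_3\cdots R'_{l-1}\big(1-\tfrac{4}{R'_1+R'_2}-\tfrac{1}{R'_3}-\cdots-\tfrac{1}{R'_{l-1}}\big)$, reduce all sign conditions to the single inequality $\tfrac{4}{R'_1+R'_2}+\sum_{j\ge 3}\tfrac{1}{R'_j}<1$, and then verify it at the critical point using the relation $(d'_a-S_a)(d'_a-S_b)=\gamma_0^2(d'_a+1)^2$ and Lemma~\ref{lemma_diff}. The only substantive difference is that you explicitly derive the minor formula via Schur complement and Sherman--Morrison, whereas the paper simply states it; your derivation is a welcome addition, and your anticipation of the $p,q$ substitution and the final $\gamma_0$-bound matches the paper's argument exactly.
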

	\begin{proof}
		The $l^{th}$ principal minor of $H_{f_{\boldsymbol{d'}}(t)}$ is 
		\begin{equation}
			|M^{p2}_l| = (-1)^{l+1}R_0^{'2}(R'_1 + R'_2)R'_3\ldots R'_{l-1}\left( 1 - \frac{4}{R'_1 + R'_2} - \frac{1}{R'_3} \ldots -\frac{1}{R'_{l-1}}\right) \label{principal_minor}.
		\end{equation}
		At the critical point, for $ j \in \mathcal{K} \setminus \{1,2\}$, $d'_j  = \frac{S_j(t)+\gamma_0}{1 -\gamma_0}$ and $\frac{t}{\left(\sum_{i=1}^{K}d'_i\right)} = \log\left(\frac{1}{\gamma_0}\right)$ (from \eqref{N_gamma} and \eqref{gamma_eq}). So, 
		\begin{align}
			\frac{1}{R'_j} & = \frac{(d'_j+1)(d'_j -S_j(t))t}{(S_j(t)+1)\left(\sum_{i=1}^{K}d'_i\right)^2} \nonumber \\ 
			& = \frac{\gamma_0\log\left(\frac{1}{\gamma_0}\right)(S_j(t)+1)}{\left(\sum_{i=1}^{K}d'_i\right)(1-\gamma_0)^2}.\label{eq_Rj}
		\end{align}
		Let $p = \frac{(d'_1 -S_1(t))}{(d'_1+1)}$ and $q =  \frac{(d'_1 -S_2(t))}{(d'_1+1)}$. For $j =1,2$ at the critical point, from \eqref{da_db} we have $\frac{(d'_1 -S_1(t))(d'_1 -S_2(t))}{(d'_1+1)^2} = pq = \gamma_0^2$. So,  
		\begin{align*}
			& \frac{(S_1(t)+1)}{(d'_1+1)(d'_1 -S_1(t))} + \frac{(S_2(t)+1)}{(d'_1+1)(d'_1 -S_2(t))} \\
			& = \frac{(S_1(t)+1)(d'_1  -S_2(t)) + (S_2(t)+1)(d'_1 -S_1(t))}{(d'_1+1)(d'_1 -S_1(t))(d'_1 -S_2(t))} \\
			& = \frac{(1-p)q + (1-q)p}{(d'_1+1)\gamma_0^2} \\
			& = \frac{(1-p)q + (1-q)p}{(d'_1+1)\gamma_0^2} \\
			& = \frac{\frac{1}{p} + \frac{p}{\gamma_0^2} -2}{(d'_1+1)}.
		\end{align*}
		Let $f(p) = \frac{1}{p} + \frac{p}{\gamma_0^2} -2$. The derivatve of $f(p)$, $f'(p) = \frac{-1}{p^2} + \frac{1}{\gamma_0^2} = 0$ at $p = \gamma_0$ and $f''(p)>0$, so $f(p)$ takes minimum vale at $p =\gamma_0$ and $\min_{p}f(p) = \frac{2(1-\gamma_0)}{\gamma_0}$. Therefore,
		$	\frac{(S_1(t)+1)}{(d'_1+1)(d'_1 -S_1(t))} + \frac{(S_2(t)+1)}{(d'_1+1)(d'_1 -S_2(t))} > \frac{2(1-\gamma_0)}{(d'_1+1)\gamma_0}$. Using this bound on $\frac{4}{R'_1 + R'_2}$ we get 
		\begin{align}
			\frac{4}{R'_1 + R'_2} & = \frac{4\log\left(\frac{1}{\gamma_0}\right)}{\left(\sum_{i=1}^{K}d'_i\right)\left(\frac{(S_1(t)+1)}{(d'_1+1)(d'_1 -S_1(t))} + \frac{(S_2(t)+1)}{(d'_1+1)(d'_1 -S_2(t))}\right)} \nonumber \\
			& \leq \frac{2\gamma_0\log\left(\frac{1}{\gamma_0}\right)(d'_1+1)}{(1-\gamma_0)\left(\sum_{i=1}^{K}d'_i\right)} \nonumber \\
			& = \frac{2\gamma_0\log\left(\frac{1}{\gamma_0}\right)(d'_1+1)}{(1-\gamma_0)\left(\sum_{i=1}^{K}d'_i\right)} \nonumber \\
			& = \frac{\gamma_0\log\left(\frac{1}{\gamma_0}\right) (S_a(t) + S_b(t) + r(\gamma_0) + 2)}{(1-\gamma_0)(1 -\gamma_0^2)\left(\sum_{i=1}^{K}d'_i\right)} \nonumber\\ 
			& = \frac{\gamma_0\log\left(\frac{1}{\gamma_0}\right) (S_a(t) + S_b(t) + r(\gamma_0) + 2)}{(1+\gamma_0)(1 -\gamma_0)^2\left(\sum_{i=1}^{K}d'_i\right)}. \label{eq_R1_R2}
		\end{align}
		Adding  $\frac{1}{R_j}$ for all $j \in \mathcal{K}\setminus \{1,2\}$ and the upper bound on $\frac{4}{R_1 + R_2}$ from equations \eqref{eq_Rj} and \eqref{eq_R1_R2} gives
		\begin{align*}
			& \frac{4}{R'_1 + R'_2} + \sum_{j \in \mathcal{K}\setminus \{1,2\}}\frac{1}{R_j} \nonumber \\
			& \leq \frac{\gamma_0\log\left(\frac{1}{\gamma_0}\right)(S_j(t)+1)}{\left(\sum_{i=1}^{K}d'_i\right)(1-\gamma_0)^2} + \frac{\gamma_0\log\left(\frac{1}{\gamma_0}\right) (S_1(t) + S_2(t) + r(\gamma_0) + 2)}{(1+\gamma_0)(1 -\gamma_0)^2\left(\sum_{i=1}^{K}d'_i\right)}\\
			& =\frac{\gamma_0\log\left(\frac{1}{\gamma_0}\right)}{(1 -\gamma_0)^2\left(\sum_{i=1}^{K}d'_i\right)}\left( \sum_{j \in \mathcal{K}\setminus \{1,2\}}(S_j(t)+1) + \frac{(S_1(t) + S_2(t) + r(\gamma_0) + 2)}{(1+\gamma_0)} \right) \\
			& = \frac{\gamma_0\log\left(\frac{1}{\gamma_0}\right)}{(1 -\gamma_0)^2} \frac{\sum_{j \in \mathcal{K}\setminus \{1,2\}}(S_j(t)+1) + \frac{(S_a(t) + S_b(t) + r(\gamma_0) + 2)}{(1+\gamma_0)}}{\frac{ \sum_{j\in \mathcal{K}^{(t)}\setminus \{1,2\}}S_j(t) + \gamma_0(K^{(t)}-2) }{1-\gamma_0} + \frac{ S_1(t) + S_2(t) + 2\gamma_0^2 + r(\gamma_0)}{(1 - \gamma_0^2)}} \text{ (from \eqref{N_gamma})} \\
			& =  \frac{\gamma_0\log\left(\frac{1}{\gamma_0}\right)}{(1 -\gamma_0)} \frac{ \sum_{j \in \mathcal{K}\setminus \{1,2\}}S_j(t) + \gamma_0(K^{(t)}-2) + (1-\gamma_0)(K^{(t)}-2) + \frac{(S_1(t) + S_2(t) + 2\gamma_0^2 + r(\gamma_0) + 2 -2\gamma_0^2)}{(1+\gamma_0)} }{  \sum_{j\in \mathcal{K}^{(t)}\setminus \{1,2\}}S_j(t) + \gamma_0(K^{(t)}-2)  + \frac{ S_1(t) + S_2(t) + 2\gamma_0^2 + r(\gamma_0)}{(1 + \gamma_0)} }\\
			& = \frac{\gamma_0\log\left(\frac{1}{\gamma_0}\right)}{(1 -\gamma_0)}\left( 1 + \frac{(1-\gamma_0)(K^{(t)}-2) + 2(1-\gamma_0)}{\sum_{j\in \mathcal{K}^{(t)}\setminus \{1,2\}}S_j(t) + \gamma_0(K^{(t)}-2)  + \frac{ S_1(t) + S_2(t) + 2\gamma_0^2 + r(\gamma_0)}{(1 + \gamma_0)}}\right)
		\end{align*}
		Applying the lower bound on $r(\gamma)$ from \eqref{r_gamma_lb} in the above equation and simplifying gives 
		\begin{align*}
			& \frac{4}{R'_1 + R'_2} + \sum_{j \in \mathcal{K}\setminus \{1,2\}}\frac{1}{R_j} \nonumber \\ 
			& \leq \frac{\gamma_0\log\left(\frac{1}{\gamma_0}\right)}{(1 -\gamma_0)}\left(1 + \frac{K^{(t)}(1-\gamma_0)}{\sum_{j\in \mathcal{K}^{(t)}\setminus \{1,2\}}S_j(t) + \gamma_0(K^{(t)}-2)  + \frac{ S_1(t) + S_2(t) + 2\gamma_0^2 + \gamma_0(S_1(t)+S_2(t)+2)}{(1 + \gamma_0)}}\right)\\
			& =  \frac{\gamma_0\log\left(\frac{1}{\gamma_0}\right)}{(1 -\gamma_0)}\left(1 + \frac{K^{(t)}(1-\gamma_0)}{\sum_{j\in K^{(t)}}S_j(t) + K^{(t)}\gamma_0}\right)\\
			& <   \frac{\gamma_0\log\left(\frac{1}{\gamma_0}\right)}{(1 -\gamma_0)}\left(1 + \frac{K^{(t)}(1-\gamma_0)}{K^{(t)}(1 +\gamma_0)}\right) \\
			& = \frac{2\gamma_0\log\left(\frac{1}{\gamma_0}\right)}{(1 -\gamma_0^2)} < 1 \text{ (from lemma \ref{lemma_diff})}.
		\end{align*}
		So, from \eqref{principal_minor} and the bound above we can conclude that the last (K-1) leading principal minors of the bordered Hessian matrix alternate in sign and the sign of the $l^{th}$ principal minor is $(-1)^{l+1}$. This is a sufficient condition for the critical point to be local maximum. This lemma is proved for $a =1, b=2$. It can similarly be proved for any $a \in \mathcal{K}$ and $b \in \mathcal{K}\setminus\{a\}$.
	\end{proof}
	From lemma \ref{lemma_prin_minor} we can say that the critical point is a local maximum. Since there is a unique value of $\gamma$ for which $g(\gamma) = g_1(\gamma) - t$, the critical point is indeed a global maximum.
	When $\gamma \to 1$, the ratio $\frac{\log\left(\frac{1}{\gamma}\right)}{1-\gamma}$ tends to 1. So,
	\begin{align}\label{eq:g_1_gamma_as_gamma_to_1}
		g_1(\gamma) & = \frac{\log\left(\frac{1}{\gamma}\right)}{1-\gamma} \left( \sum_{j \in \K^{(t)} \setminus \{a,b\}}S_j(t) + \gamma (K^{(t)}-1) + \frac{S_a(t) + S_b(t) + 2\gamma^2 + r(\gamma)}{1+\gamma} \right) \nonumber \\
		& \overset{(u)}{\to} \sum_{j \in \K}S_j(t) +K^{(t)} \text{ as } \gamma \to 1.
	\end{align} 
	The limit $(u)$ holds because from definition of $r(\gamma)$, $r(1) = S_a(t) + S_b(t) + 2$ and  $\sum_{j \in \K^{(t)}} S_j(t)= \sum_{j \in \K} S_j(t)$.
	Thus, the zero of $g(\gamma)$ for $t = \sum_{j \in \K} S_j(t) + K^{(t)}$ is obtained at $\gamma \to 1$. This implies that the values of $d'_j$ for all $j \in \K$ tend to infinity. Thus the identity based rule applies for $t > \sum_{j \in \K}S_j(t) +K^{(t)}.$
\end{proof}
\begin{proof}[Proof of Lemma~\ref{Lemma:Y}]
	Let $t > \sum_{j\in \K}S_j(t) + K^{(t)}$. When $S_a(t) > S_b(t)$, from~\eqref{eq:Wab_T2}, Lemma~\ref{lemma:sup_pf_ST_Yabt_wi} and \eqref{Yabt_lower_bound}
\begin{align}
Y_{a,b}(t)  =   &\log\left( \sum_{\boldsymbol{d}' \in \S_{\alpha}(t)} \frac{\prod_{j=1}^{K}\prod_{l=0}^{S_j(t)-1}(d'_j-l)\pP_{\bd'\sim \D}(\boldsymbol{d}')}{(\sum_{i=1}^{K}d'_i)^t} \right)  - \sup_{\boldsymbol{d}'\in \R_+^K \colon d'_a \leq d'_b} f_t(\bd'), \nonumber \\
\end{align}
	Whereas from~\eqref{unconstrained_sup}, 
	\begin{align}
		Y_{b,a}(t)  =   &\log\left( \sum_{\boldsymbol{d}' \in \S_{\alpha}(t)} \frac{\prod_{j=1}^{K}\prod_{l=0}^{S_j(t)-1}(d'_j-l)\pP_{\bd'\sim \D}(\boldsymbol{d}')}{(\sum_{i=1}^{K}d'_i)^t} \right)-\sup_{\boldsymbol{d}'\in \R_+^K}f_t(\bd'). \label{Ybat}
	\end{align}
	Comparing $Y_{a,b}(t)$ with $Y_{b,a}(t)$ we get
	\begin{equation}
		Y_{a,b}(t) >  Y_{b,a}(t) \text{ when }  S_a(t) \geq S_b(t) \label{Yab_vs_Yba}.
	\end{equation}
	Also, note that for any pair $a,b$, when $S_a(t) \geq S_b(t)$, $Y _{b,a}(t)$ has the same value given in \eqref{Ybat}. Following arguments similar to those in 
	Lemma \ref{Lemma:Z},
	\begin{align*}
		Y(t)  = \max_{a \in \K}\min_{b \in \K\setminus\{a\}}Y_{a,b}(t) = Y_{\tilde{a}_t, \tilde{b}_t}(t).
	\end{align*}
\end{proof}
%

\begin{proof}[Proof of Theorem~\ref{thm:correctness_identitybased}]
	If the mode is chosen by \nialgo, the mode is correct with probability atleast $1 - \delta$. This is evident from the proof of Theorem \ref{thm:correctness_identityless}. Now, we prove the correctness of mode chosen by the identity-based stopping rule when condition \eqref{eq:Yabt_stopping_rule} is satisfied.
	Let $a^*$ denote the mode. For $b \in \K$, let  
	$$\mathit{E}_b^t = \{\omega: \tau_\delta =t, a^* = b\}$$ 
	be the set of sample paths for which the stopping time of identity-based stopping rule is $t$ and the mode estimated is $b$. 
	Note that 
	$$\mathit{E}_b^{t_1}\cap\mathit{E}_b^{t_2} = \phi \text{ for all } t_1 \neq t_2.$$
	The probability of error in estimating the mode
	\begin{align*}
		\pP(a^* \neq 1) & = \sum_{b \in \mathcal{K}\setminus\{1\}} \sum_{t=0}^{\infty} \sum_{\omega \in \mathit{E}_b^t} \mathcal{L}_{\boldsymbol{d}}(\omega) \\
		& \leq  \sum_{b \in \mathcal{K}\setminus\{1\}} \sum_{t=0}^{\infty} \sum_{\omega \in \mathit{E}_b^t} \sup_{d'_1 \geq d'_b}\mathcal{L}_{\boldsymbol{d'}}(\omega) \\
	\end{align*}
	%
	%
	\begin{align*}
		\pP(a^* \neq 1) & \leq \sum_{b \in \mathcal{K}\setminus\{1\}} \sum_{t=0}^{\infty} \sum_{\omega \in \mathit{E}_b^t} \frac{\sup_{d'_1 \geq d'_b}\mathcal{L}_{\boldsymbol{d'}}(\omega)\mathbb{E}_{\bd'\sim \D}\left[\mathcal{L}_{\boldsymbol{d}'}(\bx^t, \bsigma^t)\right]}{\mathbb{E}_{\bd'\sim \D}\left[\mathcal{L}_{\boldsymbol{d}'}(\bx^t, \bsigma^t) \right]} \\
		& \overset{(1)}{\leq} \sum_{b \in \mathcal{K}\setminus\{1\}} \sum_{t=0}^{\infty} \sum_{\omega \in \mathit{E}_b^t} e^{-W_{b,1}(t)}\mathbb{E}_{\bd'\sim \D}\left[\mathcal{L}_{\boldsymbol{d}'}(\bx^t, \bsigma^t) \right] \\
		& \overset{(2)}{\leq}\sum_{b \in \mathcal{K}\setminus\{1\}} \sum_{t=0}^{\infty} \sum_{\omega \in \mathit{E}_b^t} e^{-Y_{b,1}(t)}\mathbb{E}_{\bd'\sim \D}\left[\mathcal{L}_{\boldsymbol{d}'}(\bx^t, \bsigma^t) \right].
	\end{align*}
	The inequality $(1)$ follows from the definition of $W_{a,b}(t)$ in \eqref{def_Wabt} and $(2)$ follows from \eqref{Yabt_lower_bound}.
	Since $Y_{b,1}(t)$ is greater than threshold $\beta(t,\delta/2)$ at stopping time $t$,
	\begin{align*}
		\pP(a^* \neq 1) 
		& \leq \sum_{b \in \mathcal{K}\setminus\{1\}} e^{-\beta(t,\delta/2)} \underbrace{\sum_{t=0}^{\infty} \sum_{\omega \in \mathit{E}_b^t}\mathbb{E}_{\bd'\sim \D}\left[\mathcal{L}_{\boldsymbol{d}'}(\bx^t, \bsigma^t)\right]}_{(p*)} \\
		& \leq \frac{\delta}{2},
	\end{align*}
	for $\beta(t, \delta/2) = \log\left(\frac{2(K-1)}{\delta}\right)$, as $(p*)$ is $\pP( a^* = b)$ under an alternate probability distribution.
\end{proof}



\begin{proof}[Proof of Theorem~\ref{thm:optimality_identitybased}]
	\textit{Proof of \eqref{eq:asymp_op_1_wi}}:\\
	Let event 
	$$\mathcal{E}_1 = \{ \forall j \in \mathcal{K}, S_j(t) \xrightarrow[t \to \infty]{} d_j \}.$$
	\begin{lemma}
		The event $\mathcal{E}_1$ is of probability 1.
	\end{lemma}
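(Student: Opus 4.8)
The plan is to observe that $S_j(t)$ is non-decreasing in $t$ and bounded above by $d_j$, so it converges almost surely to some limit $S_j(\infty) \le d_j$; the task then reduces to showing that this limit equals $d_j$ for every $j$, almost surely. The key insight is that under identity-based sampling with replacement, $S_j(t)$ attains its maximal value $d_j$ as soon as every individual belonging to community~$j$ has been sampled at least once. Hence it suffices to show that every individual in the population is eventually sampled, with probability one.

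First I would fix an arbitrary individual in the population, say individual~$i$. Since each sample is drawn uniformly at random and independently across epochs, the probability that $i$ is the individual sampled at any given epoch is exactly $1/N$, and these events are independent across time. Consequently the probability that $i$ is not sampled during the first $t$ epochs is $(1 - 1/N)^t$, which tends to $0$ as $t \to \infty$. Letting $A_i$ denote the event that individual~$i$ is never sampled, this gives $\pP(A_i) = \lim_{t\to\infty}(1-1/N)^t = 0$.

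Next I would take a union bound over the finitely many individuals. Since the population contains only $N$ individuals, $\pP\big(\bigcup_{i=1}^N A_i\big) \le \sum_{i=1}^N \pP(A_i) = 0$. Thus, with probability one, every individual is sampled at some finite (random) time, and so there is an almost surely finite random time $T$ such that for all $t \ge T$ we have $S_j(t) = d_j$ for every $j \in \K$. Combined with the monotonicity of $S_j(\cdot)$ and the bound $S_j(t) \le d_j$, this yields $S_j(t) \to d_j$ for all $j$ almost surely, i.e.\ $\pP(\mathcal{E}_1) = 1$.

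The argument is essentially a coupon-collector / Borel--Cantelli computation and presents no serious obstacle; the only point that requires care is that we need \emph{every} individual (not merely a constant fraction) to be observed, which is precisely why the finiteness of the population~$N$---guaranteeing that the union bound over the events $A_i$ involves only finitely many terms---is essential.
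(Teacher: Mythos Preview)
Your proof is correct and follows essentially the same route as the paper: both argue that each of the $N$ individuals is eventually sampled by computing $\pP(\text{individual }i\text{ not sampled in }t\text{ epochs})=(1-1/N)^t\to 0$, and then take a union bound over the finitely many individuals to conclude $\pP(\mathcal{E}_1^c)=0$. Your additional remarks on the monotonicity of $S_j(\cdot)$ and the coupon-collector framing are helpful context but do not change the underlying argument.
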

	\begin{proof}
		Let $A_{j,t}$ be the event that $j^{\text{th}}$ individual is not picked in $t$ samples. Event $A_{j,t+1} \subset A_{j,t}$. The event that  $j^{\text{th}}$ individual is not picked as $t \to \infty$ can be written as $\cap_{t=1}^{\infty}A_{j,t}$.
		The probablity 
		\begin{align*}
			\pP(\mathcal{E}_1^c) & \leq \pP(\cup_{j=1}^{N}\cap_{t=1}^{\infty}A_{j,t}) \\ 
			& \leq \sum_{j=1}^{N}\pP(\cap_{t=1}^{\infty}A_{j,t})\\
			& = \sum_{j=1}^{N} \lim_{t\to \infty }\pP(A_{j,t})\\ 
			& = \sum_{j=1}^{N}\lim_{t\to \infty }\left(\frac{N-1}{N}\right)^t\\
			& =0.
		\end{align*}
		So, the event $\mathcal{E}_1$ is of probability 1 as $t \to \infty$.  
	\end{proof}
    On $\mathcal{E}_1$, there exists a time $t_1 > N$ such that for $t>t_1$, $S_j(t) = d_j$. For $t > t_1$,
	\begin{align*}
		 Y(t) & = \log\left( \sum_{\boldsymbol{d}'\in \nN^K, d_j\leq d'_j\leq\alpha d_j}\frac{\prod_{j=1}^{K}\prod_{l=0}^{d_j-1}(d'_j-l)\pP_{\boldsymbol{d}'\sim \D}(\boldsymbol{d}')}{(\sum_{i=1}^{K}d'_i)^t} \right)  \\
		& \hspace{4cm}  -\left( \sum_{j \in \mathcal{K}^{(t)}}\int_{-1}^{d_j}\log(d^{1, 2}_j(\gamma_0)-v)dv -t\log\left(\sum_{i=1}^{K}d^{1,2}_i(\gamma_0)\right) \right).
	\end{align*}
	The summation 
	\begin{align*}
		& \sum_{\boldsymbol{d}'\in \nN^K, d_j\leq d'_j\leq\alpha d_j}\frac{\prod_{j=1}^{K}\prod_{l=0}^{d_j-1}(d'_j-l)\pP_{\boldsymbol{d}'\sim \D}(\boldsymbol{d}')}{(\sum_{i=1}^{K}d'_i)^t} \\
		& > \sum_{\boldsymbol{d}'\in \nN^K, d_j\leq d'_j\leq\alpha d_j}\frac{\pP_{\boldsymbol{d}'\sim \D}(\boldsymbol{d}')}{(\sum_{i=1}^{K}d'_i)^t} \\
		& = \frac{\pP_{\boldsymbol{d}'\sim \D}(\boldsymbol{d}'=\bd)}{N^t} + \sum_{\boldsymbol{d}'\in \nN^K, d_j< d'_j\leq\alpha d_j}\frac{\pP_{\boldsymbol{d}'\sim \D}(\boldsymbol{d}')}{(\sum_{i=1}^{K}d'_i)^t}\\
		& > \frac{\pP_{\boldsymbol{d}'\sim \D}(\boldsymbol{d}'=\bd)}{N^t}.
	\end{align*}
	So, for $t > t_1$,
	\begin{align*}
		 Y(t) 
		& > \log\left(  \pP_{\boldsymbol{d}'\sim \D}(\boldsymbol{d}'=\bd)\right) -t\log(N) \nonumber \\
		& \hspace{4cm}  - \sum_{j \in \mathcal{K}^{(t)}}\int_{-1}^{d_j}\log(d^{1, 2}_j(\gamma_0)-v)dv + t\log\left(\sum_{i=1}^{K}d^{1,2}_i(\gamma_0)\right).
	\end{align*}
	From lemma \ref{lemma_g_gamma}, as $g_1(\gamma)$ is a deceasing function of $\gamma$, for $t \to \infty$, the zero of $g(\gamma) = g_1(\gamma) - t$ is achieved at $\gamma \to 0.$ As $\gamma_0 \to 0$, $d^{1,2}_1(\gamma_0)$ and $d^{1, 2}_2(\gamma_0)$ tend to $S_{1}(t)$ and $d^{1,2}_j (\gamma_0)\to S_j(t)$. 
	Since as $t \to \infty$, $S_j(t) \to d_j$ for all $j \in \mathcal{K}$ and $Y(t) \to t\log(\frac{N-d_2 + d_1}{N})$, there exists a time $t_2 > t_1 $
	such that for $t>t_2$ and some $\epsilon>0$,
	\begin{equation*}
		Y(t) >\frac{t}{1+\epsilon}\log\left(\frac{N-d_2 + d_1}{N}\right).
	\end{equation*}
	The stopping time 
	\begin{align*}
		\tau_\delta &= \inf\left\{t\in\nN:  Y(t) > \log\left(\frac{K-1}{\delta}\right)\right\} \\
		& \leq t_2\vee \inf\left\{ t \in \nN: t(1 + \epsilon)^{-1}\log\left(\frac{N-d_2 + d_1}{N}\right) \geq \log\left( \frac{(K-1)}{\delta} \right) \right\}.
	\end{align*}
	Hence,
	\begin{equation*}
		\tau_\delta \leq t_2 \vee (1+\epsilon)\log\left( \frac{(K-1)}{\delta} \right)\log\left(\frac{N-d_2 + d_1}{N}\right)^{-1}.
	\end{equation*}
	Thus, on $\mathcal{E}_1$, for every $\delta \in (0,1]$, $\tau_\delta$ is finite and 
	\begin{align}\label{eq:wi_op}
		\lim\sup_{\delta \to 0} \frac{\tau_\delta}{\log(\frac{1}{\delta})} \leq \left(\log\left(\frac{N-d_2 + d_1}{N}\right)\right)^{-1}
	\end{align} 
	when $\epsilon \to 0$.
	For the \nialgo\ optimality is proved in Theorem \ref{thm:optimality_identityless}. From Lemma~\ref{lemma:comparison_of_LB}, $\left(\log\left(\frac{N-d_2 + d_1}{N}\right)\right)^{-1} < (g^*(\boldsymbol{p}))^{-1}$. So, the theorem statement is implied by \eqref{eq:wi_op}.


\textit{Proof of \eqref{eq:asymp_op_2_wi}}:\\
	We define an event  

	\begin{align}
		\mathcal{E}_T = \cap_{t =\sqrt{T}}^{T}(S(t) = \boldsymbol{d}).
	\end{align}
	On $\mathcal{E}_T$, for $t > \sqrt{T}$,
	\begin{align*}
		Y(t) \nonumber 
		& = \log\left( \sum_{\boldsymbol{d}'\in \nN^K, d_j\leq d'_j\leq\alpha d_j}\frac{\prod_{j=1}^{K}\prod_{l=0}^{d_j-1}(d'_j-l)\pP_{\boldsymbol{d}'\sim \mathcal{D}}(\boldsymbol{d}')}{(\sum_{i=1}^{K}d'_i)^t} \right)  \\
		& \hspace{4cm}  -\left( \sum_{j \in \mathcal{K}^{(t)}}\int_{-1}^{d_j}\log(d^{1,2}_j(\gamma_0)-v)dv -t\log\left(\sum_{i=1}^{K}d^{1,2}_i(\gamma_0)\right) \right).
	\end{align*}
	Taking $\alpha = 1$, on $\mathcal{E}_T$, for $t > \sqrt{T}$,

	\begin{align*}
		Y(t) \nonumber 
		& > \log\left(\prod_{j=1}^{K}\prod_{l=0}^{d_j-1}(d_j-l)\pP_{\boldsymbol{d}'\sim \mathcal{D}}(\boldsymbol{d}'=\bd) \right) -t\log(N) \\
		& \hspace{4cm}  -\left( \sum_{j \in \mathcal{K}^{(t)}}\int_{-1}^{d_j}\log(d^{1,2}_j(\gamma_0)-v)dv -t\log\left(\sum_{i=1}^{K}d^{1,2}_i(\gamma_0)\right) \right).
	\end{align*}
	As $t \to \infty$, $\gamma_0 \to 0$. Consequently, $d^{1,2}_1(\gamma_0)$ and $d^{1,2}_2(\gamma_0)$ tend to $S_a(t)$ and $d^{1,2}_j(\gamma_0) \to S_j(t)$ and $\sum_{i=1}^{K}d^{1,2}_i(\gamma_0) \to (N - d_2 + d_1)$ as $t \to \infty$.
	So, on $\mathcal{E}_T$, there exists a time $T_1>\sqrt{T}$ such that for $t > T_1$, 
	\begin{align*}
		Y(t) \nonumber 
		& > \frac{t}{1 + \varepsilon}\log\left(\frac{N - d_2 + d_1}{N}\right).
	\end{align*}
	Let $T \geq T_1$. On $\mathcal{E}_T$,
	\begin{align}
		\min(\tau_\delta, T) & \leq \sqrt{T} + \sum_{t=\sqrt{T}}^{T}\mathbb{I}(\tau_\delta >t)\\
		& \leq \sqrt{T} + \sum_{t=\sqrt{T}}^{T}\mathbb{I}(Y(t) \leq \beta(t, \delta/2))\\
		& \leq \sqrt{T} + \sum_{t=\sqrt{T}}^{T}\mathbb{I}\left(\frac{t}{1 + \varepsilon}\log\left(\frac{N - d_2 + d_1}{N}\right)  \leq \beta(T, \delta/2) \right) \\
		& \leq \sqrt{T} + \frac{\beta(T, \delta/2)(1 + \varepsilon)}{\log\left(\frac{N - d_2 + d_1}{N}\right)}
	\end{align}  
	we define 
	\begin{align*}
		T_0(\delta) = \inf\left\{T \in \nN: \sqrt{T} + \frac{\beta(T, \delta/2)(1 + \varepsilon)}{\log\left(\frac{N - d_2 + d_1}{N}\right)} \leq T \right\},
	\end{align*}  
	such that for every $T \geq \max(T_0(\delta), T_1)$, $\mathcal{E}_T \subseteq ( \tau_\delta \leq T )$. So, for all $T \geq \max(T_0(\delta), T_1)$,
	\begin{align}
		\pP(\tau_\delta > T) & \leq \pP(\mathcal{E}_T^c) \nonumber \\
		\text{ and } \eE[\tau_\delta] \leq T_0(\delta) + T_1 + \sum_{T=1}^{\infty}\pP(\mathcal{E}_T^c). \label{wi:bound_on_E(tau_delta)}
	\end{align} 
	Let $\eta > 0$. We define a constant
	$$C(\eta) = \inf\left\{ T \in \nN: T - \sqrt{T} \geq \frac{T}{1+\eta} \right\}$$
	thus, 
	\begin{align}
		T_0(\delta) & \leq C(\eta) + \inf\left\{ T \in \nN: \frac{\beta(T, \delta/2)(1 + \varepsilon)}{\log\left(\frac{N - d_2 + d_1}{N}\right)} \leq \frac{T}{1+\eta} \right\} \nonumber \\
		& = C(\eta) + \frac{\beta(T, \delta/2)(1 + \varepsilon)(1 + \eta)}{\log\left(\frac{N - d_2 + d_1}{N}\right)}. \label{wi:bound_on_T0(delta)}
	\end{align}
	Let $A_{j,t}$ be the event that $j^{th}$ individual is not picked in $t$ samples. Event $A_{j,t+1} \subset A_{j,t}$. So, 
	\begin{align*}
		\pP(\mathcal{E}_T^c) & \leq \pP(\cup_{j=1}^{N}A_{t,\sqrt{T}}) \\ 
		& \leq \sum_{j=1}^{N}\pP(A_{j,\sqrt{T}})\\ 
		& = \sum_{j=1}^{N}\left(\frac{N-1}{N}\right)^{\sqrt{T}}\\
		& = N\left(\frac{N-1}{N}\right)^{\sqrt{T}}.
	\end{align*}
	Then, 
	\begin{align}
		\sum_{T=1}^{\infty}\pP(\mathcal{E}_T^c) & \leq N\sum_{T=1}^{\infty}\left(\frac{N-1}{N}\right)^{\sqrt{T}} \nonumber \\
		& \leq N\int_{T=1}^{\infty}\left(\frac{N-1}{N}\right)^{\sqrt{T-1}} \nonumber \\
		& = \frac{2N}{\log^2\left(\frac{N-1}{N}\right)} \label{wi:bound on P(E^c)}
	\end{align}
	From \eqref{wi:bound_on_E(tau_delta)}, \eqref{wi:bound_on_T0(delta)} and \eqref{wi:bound on P(E^c)},
	\begin{align}
		\eE[\tau_\delta] \leq C(\eta) + \frac{\log\left(\frac{2*(K-1)}{\delta}\right)(1 + \varepsilon)(1 + \eta)}{\log\left(\frac{N - d_2 + d_1}{N}\right)} + T_1 + \frac{2N}{\log^2\left(\frac{N-1}{N}\right)}.
	\end{align}
	So, for any $\varepsilon >0$ and $\eta > 0$, 
	\begin{align*}
		\limsup_{\delta \to 0}\frac{\eE[\tau_\delta]}{\log\left(\frac{1}{\delta}\right)} \leq \frac{(1 + \varepsilon)(1 + \eta)}{\log\left(\frac{N - d_2 + d_1}{N}\right)}.
	\end{align*}
	As $\varepsilon$ and $\eta$ go to zero,
	\begin{align*}
		\limsup_{\delta \to 0}\frac{\eE[\tau_\delta]}{\log\left(\frac{1}{\delta}\right)} \leq \frac{1}{\log\left(\frac{N - d_2 + d_1}{N}\right)}.
	\end{align*}
	In Theorem \ref{thm:optimality_identityless} optimality of \nialgo\ is proved. From Lemma~\ref{lemma:comparison_of_LB}, $\left(\log\left(\frac{N-d_2 + d_1}{N}\right)\right)^{-1} < (g^*(\boldsymbol{p}))^{-1}$. So, the theorem statement is implied by the above equation.
\end{proof}

\end{document}